	\newtheorem{definition}{Definition}[section]
	\newtheorem{theorem}{Theorem}[section]
	\newtheorem{lemma}{Lemma}[section]
	\newtheorem{corollary}{Corollary}[section]
	\newtheorem{proposition}{Proposition}[section]
	\newtheorem{remark}{Remark}[section]
	\newcommand{\RN}{\mathbb R^N}
	\newcommand{\om}{\Omega}
	\newcommand{\iy}{\infty}
	\newcommand{\BR}{{\mathbb R}}
	\newcommand{\la}{\lambda}
	\newcommand{\R}{\mathbb R}
	\newcommand{\bt}{\begin{theorem}}
	\newcommand{\et}{\end{theorem}}
	\newcommand{\bl}{\begin{lemma}}
	\newcommand{\el}{\end{lemma}}
	\newcommand{\bd}{\begin{definition}}
	\newcommand{\ed}{\end{definition}}
	\newcommand{\bc}{\begin{corollary}}
	\newcommand{\ec}{\end{corollary}}
	\newcommand{\bp}{\begin{proof}}
	\newcommand{\ep}{\end{proof}}
	\newcommand{\bx}{\begin{example}}
	\newcommand{\ex}{\end{example}}
	\newcommand{\bi}{\begin{exercise}}
	\newcommand{\ei}{\end{exercise}}
	\newcommand{\bo}{\begin{proposition}}
	\newcommand{\eo}{\end{proposition}}
	\newcommand{\br}{\begin{remark}}
	\newcommand{\er}{\end{remark}}
	\newcommand{\be}{\begin{equation}}
	\newcommand{\ee}{\end{equation}}
	\newcommand{\ba}{\begin{align}}
	\newcommand{\ea}{\end{align}}
	\newcommand{\bn}{\begin{enumerate}}
	\newcommand{\en}{\end{enumerate}}
	\newcommand{\bg}{\begin{align*}}
	\newcommand{\bcs}{\begin{cases}}
	\newcommand{\ecs}{\end{cases}}
	\newcommand{\GR}{{\cal G}}
	\newcommand{\MR}{{\cal M}}
	\newcommand{\NR}{{\cal N}}
	\newcommand{\PR}{{\cal P}}
	\newcommand{\RR}{{\cal R}}
	\newcommand{\WR}{{\cal W}}
	\newcommand{\bean}{\begin{eqnarray*}}
	\newcommand{\eean}{\end{eqnarray*}}
	\newcommand{\vu}{\vec{\textbf{u}}}
	\newcommand{\vt}{\vec{\textbf{t}}}
	\newcommand{\hb}{{\cal B}}
	\newcommand{\bu}{ \hat{\textbf{u}} }
\numberwithin{equation}{section}
\begin{document}
\title{\bf  Positive least energy solutions for $k$-coupled Schr\"odinger system with critical exponent:\\ the higher dimension and  cooperative case\thanks{Supported by NSFC(20171301826, 20181301532).\newline
E-mails:
~yinx16@mails.tsinghua.edu.cn (Yin);~ 
~zou-wm@mail.tsinghua.edu.cn (Zou)}}

\date{}
\author{
{\bf Xin Yin,\; \;Wenming Zou}\\
\footnotesize \it Department of Mathematical Sciences, Tsinghua University, Beijing 100084, China.}

\maketitle

	\begin{center}
	\begin{minipage}{120mm}
	\begin{center}{\bf Abstract}\end{center}
	In this paper, we study the following $k$-coupled nonlinear Schr\"odinger system with Sobolev critical exponent:
	\begin{equation*}
		\left\{
		\begin{aligned}
			-\Delta u_i & +\lambda_iu_i =\mu_i u_i^{2^*-1}+\sum_{j=1,j\ne i}^{k} \beta_{ij} u_{i}^{\frac{2^*}{2}-1}u_{j}^{\frac{2^*}{2}} \quad \hbox{in}\;\Omega,\\
			u_i&>0 \quad \hbox{in}\; \Omega \quad \hbox{and}\quad u_i=0 \quad \hbox{on}\;\partial\Omega, \quad i=1,2,\cdots, k.
		\end{aligned}
		\right.
	\end{equation*}
	Here $\Omega\subset \BR^N $ is a smooth bounded domain, $2^{*}=\frac{2N}{N-2}$ is the Sobolev critical exponent, $-\la_1(\om)<\la_i<0, \mu_i>0$ ~and~$ \beta_{ij}=\beta_{ji}\ne 0$, where $\la_1(\om)$ is the first eigenvalue of $-\Delta$ with the Dirichlet boundary condition. 
 	We characterize the positive least energy solution of the $k$-coupled system for the purely cooperative case $\beta_{ij}>0$, in higher dimension $N\ge 5$.
 	Since the $k$-coupled case is much more delicated, we shall introduce {\it the idea of induction}. 
	We point out that {\it the key idea} is to give a more accurate upper bound of the least energy. 
	It's interesting to see that the least energy of the $k$-coupled system decreases as $k$ grows.
	Moreover, we establish the existence of positive least energy solution of the limit system in $\RN$, as well as classification results.
	 
	\vskip0.23in

	{\it   Key  words:}  $k$-Coupled Schr\"odinger System; Positive least energy solution; Existence; Uniqueness.

	\vskip0.1in
	{\it  2010 Mathematics Subject Classification:} 35J50, 35J15, 35J60.

	\vskip0.23in
	\end{minipage}
	\end{center}
	\vskip0.26in
\newpage

\section{Introduction}

In this paper we consider the following $k$-coupled nonlinear Schr\"odinger system:

	\begin{equation}\label{problem}
		\left\{
		\begin{aligned}
			-\Delta u_i &+\lambda_iu_i=\mu_i u_i^{2p-1}+\sum_{j=1,j\ne i}^{k} \beta_{ij} u_{i}^{p-1}u_{j}^{p} \quad \hbox{in}\;\Omega,\\
			u_i&>0 \quad \hbox{in}\; \Omega \quad \hbox{and}\quad u_i=0 \quad \hbox{on}\;\partial\Omega, \quad i=1,2,\cdots, k,
		\end{aligned}
		\right.
	\end{equation}
where $\om= \BR^N $ or ~$\om\subset \BR^N $ is a smooth bounded domain,~$p>1$ and ~$p\le 2^*/2$ ~if $N\ge 3$,~$\mu_i>0$ ~and~$ \beta_{ij}=\beta_{ji}\ne 0$ is a coupling constant.

\vskip0.1in

System \eqref{problem} arises when we consider the standing wave solutions to the  following  time-depending  coupled nonlinear Schr\"odinger system:
\begin{equation}\label{Schrodinger original}
	\begin{cases}
	-i\frac{\partial}{\partial t}\Phi_1 =\Delta \Phi_1+\mu_1 |\Phi_1|^2\Phi_1+\beta |\Phi_2|^2\Phi_1,\\
	-i\frac{\partial}{\partial t}\Phi_2=\Delta \Phi_2+\mu_2|\Phi_2|^2\Phi_1+\beta |\Phi_1|^2\Phi_2,\\
	~\Phi_j=\Phi_j(x,t)\in \mathbb{C},~~j=1,2,\;N\leq 3,
	\end{cases}\;\;(x,t)\in \R^N\times \R,
\end{equation}
where $i$ is the imaginary unit, $\mu_1,\mu_2>0$ and $\beta\neq 0$ is a coupling constant. The system \eqref{Schrodinger original} has many applications in physics, such as the occurrence of phase separation in Bose--Einstein condensates with multiple states, or the propagation of mutuary in coherent wave packets in nonlinear optics, see e.g.
\cite{akhmediev1999partially,esry1997hartree,frantzeskakis2010dark,timmermans1998phase}.
\vskip0.1in

From the physical aspect, the solution $\Phi_j$ denotes the $j$th component of the beam in Kerr-like photorefractive media (see \cite{akhmediev1999partially}), $\mu_j$ represents self-focusing in the $j$th component, and the coupling constant $\beta$ represents the interaction between the two components of the beam.

\vskip0.1in
To study the solitary wave solutions of system \eqref{Schrodinger original}, we set $\Phi_j(x, t) = e^{i\la_jt}u_j(x)$ for $j =1, 2$.
Then, it is reduced to system \eqref{problem} with $k=2$ and $ p=2$.

\vskip0.1in
For the subcritical case, that is, $N\le 3$, the existence of least energy solutions were studied in \cite{ambrosetti2007standing,bartsch2006note,bartsch2007bound,chen2013optimal,conti2002nehari,conti2005asymptotic,dancer2010priori,Lin2005,noris2010uniform,sato2015least,sirakov2007least,terracini2009multipulse,wei2008radial} and the references therein. The  existence and multiplicity of positive and sign-changing solutions were studied by
\cite{ambrosetti2007standing,bartsch2010liouville,bartsch2007bound,chen2013multiple,chen2012infinitely,chen2013optimal,Lin2005,lin2005spikes,liu2015multiple,maia2008infinitely,noris2010existence,sato2015least,tavares2012sign} and the references therein.

\vskip0.1in
For the critical case $p = 2$ and $N=4 $ , Chen and Zou \cite{Chen2012} proved that there exists $0<\beta_1<\beta_2$~($\beta_1$ and $ \beta_2 $ depend on $ \la_i$ and $\beta$), such that \eqref{problem} has a positive least energy solution if $\beta<\beta_1 $ or $\beta>\beta_2$, while \eqref{problem} has no positive least energy solution if $\beta_1\le\beta\le \beta_2$.

\vskip0.1in
Later in \cite{Chen2015}, Chen and Zou considered the critical case $ 2p= 2^*:=\frac{2N}{N-2}$ of system \eqref{problem} in higher dimension $N \ge 5$.
It turns out that different phenomena happen from the special case $N=4$.  Indeed, the authors showed that system \eqref{problem} has a positive least energy solution for any $\beta\ne0$.
When $\beta=0$, the system is reduced to the well-known Br{\'{e}}zis-Nirenberg problem\cite{Brezis1983}
\begin{equation}\label{Brezis-Nirenberg=mui}
	-\Delta u+\la_iu=\mu_i|u|^{2^*-2}u, \quad u\in H^1_0(\Omega).
\end{equation}
The Br{\'{e}}zis-Nirenberg problem \eqref{Brezis-Nirenberg=mui} has been studied intensively, and we refer the reader to \cite{cerami1984bifurcation,chen2012brezis,devillanova2002,Schechter2010}
and references therein.

\vskip0.1in
From then on, many results corresponding to the coupled system with critical Sobolev exponent have been studied. See \cite{Liu2017,Peng2017,Wu2019,Ye2014,yang2019stable} and references therein.

\vskip0.1in
For the more general case where $k\ge2$, physically the coefficient $\beta_{ii}=\mu_i$ represents self-focusing within the $i$th component, while $\beta_{{ij}}~(i\ne j)$ illustrates interaction between two different components. 
If $\beta_{ij}>0$, the interaction between the $i$th and $j$th components is of cooperative type;
while $\beta_{ij} < 0$ means that the interaction is of competitive type.
The relation $\beta_{ij}=\beta_{ji}$ illustrates the symmetry in the interaction between different components.

\vskip0.1in
Let $\la_1(\om) $ be the first eigenvalue of $-\Delta$ with the Dirichlet boundary condition.
\vskip0.1in
For the purely cooperative case, where $\beta_{ij}>0$ for $i,j=1,2,\dots,k$, Guo, Luo and Zou \cite{Guo2018} obtained the existence and classification of positive least energy solutions to \eqref{problem} under $ -\la_1(\Omega)<\la_1=\dots=\la_k<0 $ in case of a bounded smooth domain $\Omega\subset \R^4$ with some additional conditions on the coefficients.
In the same paper,  the authors treated the system with $\la_1=\dots=\la_k=0 $ and $\Omega=\R^4$, which can be seen as the limit problem corresponding to the previous problem.
Wu \cite{Wu2019a} also obtained the existence of positive least energy solution in $\RN$ with $N\ge 4$ and $\beta_{ij}=\beta,~ i\ne j$ via variational arguments.
\vskip0.1in
Meanwhile, for the purely competitive case $\beta_{ij}<0$, \cite{Clapp2019,Tavares2020,Wu2017} studied the existence of positive least energy solutions and phase separation phenomena respectively for $N=4$ and $N\ge 5$. And some results are obtained for mixed cooperative and competitive case when $N=4$. See \cite{Tavares2020,Wei2019} and references therein.
\vskip0.1in
To the best our knowledge, there is no paper considering \eqref{problem} with arbitrary purely cooperative coefficients $\beta_{ij}>0$ in higher dimension $N\ge 5$.
It is natural to ask 
{\it whether the more general $k$-coupled system \eqref{problem} admits a positive least energy solution} similarly as the $2$-coupled system proved in \cite{Chen2015}.
In the sequel we assume that
\begin{equation}\label{2p=2ast}
	  2p=2^*.
\end{equation}
We shall see that the $k$-coupled case are more delicate than the $2$-coupled case.
{\it The key reason} is that $k\ge 2$ and $1<p<2$ when $N\ge5$.
The fact that $k\ge 2$ and $1<p<2$ make the problem more complicated comparing to the cases $p=2, k\ge 2$ and $ 1<p<2,~k=2$.
Some more ideas and techniques are needed. 

\vskip 0.2in

First, we obtained the following results.

\begin{theorem}\label{non-existence theorem of positive solutions}
	Assume that $N\ge 3$ and \eqref{2p=2ast}. Then system \eqref{problem} has no positive solutions if one of the following conditions holds.
	\begin{itemize}
		\item[(1)] $\la_i\le -\la_1(\om),~\mu_i>0 ~\hbox{and}\; \beta_{ij}>0 $~ for~ $i,j=1,2,\dots,k. $
		\item[(2)] $\mu_i, \beta_{ij}\in \R,~\la_i \ge 0 $ ~for ~$i=1,2,\dots,k,$~ and ~$\om $~ is star-shaped.
	\end{itemize}
\end{theorem}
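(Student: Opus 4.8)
The plan is to prove both non-existence statements by Pohozaev-type identities, treating the two items separately since they rest on different mechanisms.

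For item (1), I would argue by testing the $i$-th equation against $u_i$ itself and summing over $i$. Multiplying $-\Delta u_i + \la_i u_i = \mu_i u_i^{2^*-1} + \sum_{j\ne i}\beta_{ij} u_i^{p-1}u_j^{p}$ by $u_i$, integrating over $\om$, and using $u_i\in H^1_0(\om)$ together with the Poincar\'e inequality $\int_\om|\nabla u_i|^2 \ge \la_1(\om)\int_\om u_i^2$, the left-hand side is bounded below by $(\la_1(\om)+\la_i)\int_\om u_i^2 \ge 0$ when $\la_i \ge -\la_1(\om)$; but under the strict hypothesis $\la_i\le-\la_1(\om)$ we instead get that the left-hand side is $\le 0$ (strictly negative unless $u_i$ is a first eigenfunction, which cannot vanish on a set of positive measure while staying positive — a contradiction is already looming here). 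Meanwhile, since all $\mu_i>0$, all $\beta_{ij}>0$, and all $u_i>0$ in $\om$, every term on the right-hand side is strictly positive. Summing the identities over $i=1,\dots,k$ then forces a strictly negative quantity to equal a strictly positive one, which is impossible. The only subtlety is the borderline $\la_i=-\la_1(\om)$: here the left side could be zero only if each $u_i$ is proportional to the positive first eigenfunction $\varphi_1$, but then the equation would read $0 = \mu_i u_i^{2^*-1}+\sum_{j\ne i}\beta_{ij}u_i^{p-1}u_j^{p}>0$, again a contradiction. This disposes of (1) cleanly and does not require higher dimension beyond $N\ge3$ (only that $2^*$ makes sense).

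For item (2), the hypotheses $\la_i\ge0$ and $\om$ star-shaped call for the classical Pohozaev identity. Assuming (after a translation) that $\om$ is star-shaped with respect to the origin so that $x\cdot\nu\ge0$ on $\partial\om$, I would multiply the $i$-th equation by $x\cdot\nabla u_i$ and integrate by parts over $\om$; the standard computation (valid since $u_i\in H^1_0(\om)$ is a weak, hence by elliptic regularity classical, solution) yields, after summing over $i$,
\[
\frac{1}{2}\int_{\partial\om}\Big(\sum_i \Big|\frac{\partial u_i}{\partial\nu}\Big|^2\Big)(x\cdot\nu)\,dS
= -N\,\Big(\frac{1}{2^*}-\frac{1}{2}+\frac{1}{N}\Big)\int_\om\Big(\sum_i\mu_i u_i^{2^*} + \sum_{i\ne j}\beta_{ij}u_i^{p}u_j^{p}\Big) - \Big(\frac{N}{2}-\frac{N-2}{2}\Big)\int_\om\sum_i\la_i u_i^2,
\]
where the coupling terms combine because $2p=2^*$ and $\beta_{ij}=\beta_{ji}$, and the coefficient of the critical nonlinearity vanishes exactly since $\frac{1}{2^*}=\frac{1}{2}-\frac{1}{N}$. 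What remains is: the left-hand boundary integral is $\ge0$ by star-shapedness, while the right-hand side equals $-\int_\om\sum_i\la_i u_i^2 \le 0$ since each $\la_i\ge0$ and $u_i>0$. Hence both sides are zero, which forces $\partial u_i/\partial\nu\equiv0$ on $\partial\om$ for every $i$; combined with $u_i|_{\partial\om}=0$ this gives $u_i\equiv0$ by unique continuation (or simply by the Pohozaev identity applied again on the nodal structure), contradicting $u_i>0$.

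The main obstacle is bookkeeping in the Pohozaev identity for the coupled terms: one must verify that $\int_\om u_i^{p-1}u_j^{p}\,(x\cdot\nabla u_i)$ summed against its symmetric partner $\int_\om u_j^{p-1}u_i^{p}\,(x\cdot\nabla u_j)$ reassembles into $-\frac{N}{2p}\int_\om x\cdot\nabla(u_i^p u_j^p) = \frac{N}{2p}\cdot 2p\cdot\frac{1}{2p}\int_\om u_i^p u_j^p$ — i.e. that the divergence-form manipulation is legitimate, which needs the integrability $u_i^p u_j^p\in L^1$ and the regularity $u_i\in C^1(\ov\om)$, both of which follow from $u_i\in H^1_0\cap L^\infty$ and standard elliptic theory once one notes the right-hand sides of \eqref{problem} are in $L^{N/2}_{\loc}$ and a Brezis--Kato/Moser iteration applies. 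I would record this regularity as a preliminary remark and then the rest is the routine computation sketched above.
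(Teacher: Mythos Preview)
Your treatment of item (2) via the Pohozaev identity is correct and matches the paper's argument essentially line for line; the paper combines the identity \eqref{pohozaev} with the Nehari relation obtained by testing each equation against $u_i$ to reach the same conclusion.

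However, your argument for item (1) has a genuine gap. After multiplying the $i$-th equation by $u_i$ and integrating, the left-hand side is $\int_\om(|\nabla u_i|^2+\la_i u_i^2)$. The Poincar\'e inequality $\int_\om|\nabla u_i|^2\ge \la_1(\om)\int_\om u_i^2$ gives only a \emph{lower} bound, namely $\text{LHS}\ge (\la_1(\om)+\la_i)\int_\om u_i^2$. When $\la_i\le-\la_1(\om)$ this lower bound is non-positive, but that tells you nothing about the sign of the LHS itself: a positive solution $u_i$ need not be close to $\varphi_1$, and its Rayleigh quotient $\int|\nabla u_i|^2/\int u_i^2$ can be arbitrarily large, making the LHS strictly positive. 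So you cannot conclude LHS $\le 0$, and the contradiction does not follow.

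The paper's fix is to test the $i$-th equation against the first eigenfunction $\varphi_1$ rather than against $u_i$. Integration by parts and the eigenvalue equation $-\Delta\varphi_1=\la_1(\om)\varphi_1$ give the \emph{exact} identity
\[
(\la_1(\om)+\la_i)\int_\om u_i\varphi_1\,\mathrm{d}x=\int_\om\Big(\mu_i u_i^{2p-1}+\sum_{j\ne i}\beta_{ij}u_i^{p-1}u_j^p\Big)\varphi_1\,\mathrm{d}x.
\]
Now $u_i,\varphi_1>0$ forces the left side $\le 0$ and the right side $>0$, a contradiction. The key point is that testing against $\varphi_1$ converts the Poincar\'e \emph{inequality} into an \emph{equality}, which is what your argument needs but does not have.
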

\vskip 0.1in

If $\om=\RN$, and $ (u_1,\dots,u_k)$ is any a solution of \eqref{problem}, by the Pohozaev identity \eqref{pohozaev}, it is easy to get that 
$\int_{\RN}\sum \la_iu_i^2~\mathrm{d}x=0$, so $(u_1,\dots,u_k)\equiv(0,\dots,0) $ if $\la_1,\dots,\la_k$ are of the same sign.
Therefore, in the sequel we assume that $\om\subset\RN $ is a smooth bounded domain.
Some non-existence results also have been attained in Theorem \ref{non-existence theorem of positive solutions}. In order to study the existence of positive least energy solution of \eqref{problem}, we assume that $-\la_1(\om)<\la_i<0,~\mu_i>0 $ and $\beta_{ij}>0 $~ for~ $i,j=1,2,\dots,k$.

\vskip0.1in
We call a solution $(u_1,\dots,u_k)$ is nontrivial if $u_i\not\equiv0 $ for all $i=1,2,\dots, k$, while a solution  $(u_1,\dots,u_k)$ is semi-trivial if $u_i\equiv0 $ for some $i=1,2,\dots, k$.
We only study the nontrivial solutions of \eqref{problem} in this paper.

\vskip 0.1in

Denote $ H := (H_{0}^{1}(\om))^{k}$ and $\vu=(u_1,u_2,\dots,u_k)\in H $ is a vector function .~It is well known that solutions of \eqref{problem} correspond to the critical points of $C^1 $ functional $E:H\to \BR $ given by
\begin{equation}
	\begin{aligned}\label{energy functional E}
	    E(\vu) 
	    =&~~\dfrac{1}{2}\int\limits_{\om}\sum_{i=1}^{k}(|\nabla u_i|^2+\lambda_iu_i^2)\\
	    &-\dfrac{1}{2p}\int\limits_{\om}(\sum_{i=1}^{k}\mu_i|u_i|^{2p}+\sum_{i=1,j>i}^{k} 2\beta_{ij} |u_{i}|^{p}|u_{j}|^{p} ).
	\end{aligned}
\end{equation}
We say a solution $\vu$ of \eqref{problem} is a least energy solution, if $\vu$ is nontrivial and $$E(\vu)\le E(\vec{\textbf{v}})$$ for any other nontrivial solution $\vec{\textbf{v}}$ of \eqref{problem}. 
As in \cite{Lin2005}, we define a Nehari type manifold
	\begin{equation*}
		\MR=\{\vu\in H ~|~u_i\not\equiv 0,~E'(\vu)(0,\dots,u_i,\dots,0)=0~\hbox{for}\; i=1,2,\dots,k \}.
	\end{equation*}
Then any nontrivial solutions of \eqref{problem} belong to $\MR$. 
Take $\varphi_1,\varphi_2,\dots,\varphi_k\in C_0^\iy(\om)$ with $\varphi_i\not\equiv 0 $ and $\hbox{supp}(\varphi_i)\cap \hbox{supp}(\varphi_j)=\emptyset $ for $\forall ~i,j=1,2,\dots,k ;$~then there exists $c_1,c_2,\dots,c_k>0$ such that $(\sqrt c_1\varphi_1,\sqrt c_2\varphi_2,\dots,\sqrt c_k\varphi_k)\in \MR$, so $\MR\ne \emptyset. $
\vskip0.1in
Define the least energy
\begin{equation}\label{least energy B}
	B:=\inf\limits_{\vu\in\MR}E(\vu)
	=\inf\limits_{\vu\in\MR}\frac{1}{N}\int\limits_{\om}\sum_{i=1}^{k}(|\nabla u_i|^2+\lambda_iu_i^2).
\end{equation}
Now we consider the special case $-\la_1(\om)<\la_1=\la_2=\cdots=\la_k=\la<0$. It is well known that   the Br{\'{e}}zis-Nirenberg problem \cite{Brezis1983}
\begin{equation}\label{Brezis-Nirenberg problem 1}
	-\Delta u+\lambda u=|u|^{2^*-2}u, \quad u\in H^1_0(\om) 	
\end{equation}
has a positive least energy solution $\omega $ with energy
\begin{equation}\label{Brezis-Nirenberg least energy}
	B_1:=\frac{1}{N}\int\limits_{\om}(|\nabla \omega|^2+\lambda\omega^2)~\mathrm{d}x
	=\frac{1}{N}\int\limits_{\om} \omega^{2^*} ~\mathrm{d}x.
\end{equation}
Consider the following nonlinear algebric problem 
\begin{equation}\label{algebric system}
	\left\{
		\begin{aligned}
		    \mu_it_i^{2p-2}+\sum\limits_{j=1,j\ne i}^{k}\beta_{ij} t_i^{p-2}t_j^{p} =& 1,~~\hbox{for}\; i=1,2,\dots,k,\\
		    t_i>&0,~~\hbox{for}\; i=1,2,\dots,k.
		\end{aligned}
	\right. 
\end{equation}
In \cite{Chen2012,Chen2015}, Chen and Zou proved the exsitece of solution to \eqref{algebric system} in the case $k=2$.
Recently, Wu \cite{Wu2019a} proved that \eqref{algebric system} has a solution for $k\ge2$ with $\beta_{ij}=\beta,$ by applying the variational argument to the related system.
We generalize the existence result of the algebric system \eqref{algebric system} to arbitrary $\beta_{ij}>0$ and give a more minute and geometric proof in Lemma \ref{existence algebric}.
Moreover, we will prove in Lemma \ref{existence algebric} that there exists $\vec{\tilde{\textbf{t}}}=(\tilde{t}_1,\tilde{t}_2,\ldots,\tilde{t}_k)$ such that
\begin{equation}\label{least algebric solution}
	\vec{\tilde{\textbf{t}}}~~\hbox{satisfies}~\eqref{algebric system}~\hbox{and}~\vec{\tilde{\textbf{t}}}~\hbox{attains}~d_{k}.
\end{equation}
Here, $d_k:=\inf\limits_{\vt\in\PR_k }\GR_k(\vt)$, where
	\begin{equation*}
		\GR_k(\vt):=\sum\limits_{i=1}^k(\frac{t_i^2}{2}-\frac{\mu_i|t_i|^{2p}}{2p})- \frac{1}{2p}\big(\sum\limits_{i=1,j>i}^{k}2\beta_{ij}|t_i|^{p}|t_j|^{p} \big),
	\end{equation*}
	\begin{equation*}
		\PR_k:= \{\vt\in\R^k \backslash\{\vec{\textbf{0} } \} ~|~P_k(\vt):=\sum\limits_{i=1}^k(t_i^2-\mu_i|t_i|^{2p})-  \sum\limits_{i=1,j>i}^{k}2\beta_{ij}|t_i|^{p}|t_j|^{p}=0   \}.
	\end{equation*}
We will see that $d_{k}$ has a similar form as the minimizing problem \eqref{A} and $d_k$ plays an important role in this paper.

\vskip 0.1in

Our second result also  deals with the special case $\la_1=\la_2=\cdots=\la_k:=\la$.


\begin{theorem}\label{symmetric case}
	Assume that $N\ge 5$ and \eqref{2p=2ast},      $-\la_1(\Omega)<\la_1=\la_2=\cdots=\la_k:=\la <0$ and $\mu_i,\beta_{ij}>0$.
	Then system \eqref{problem} has a positive solution of the synchronized form
		\begin{equation}\label{synchronized form}
				(t_1\omega,t_2\omega,\dots,t_k\omega),
		\end{equation}
	where $\omega$ is the positive least energy solution of \eqref{Brezis-Nirenberg problem 1} and $(t_1,t_2,\dots,t_k)$ solves \eqref{algebric system}.
	
\end{theorem}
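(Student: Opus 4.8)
The plan is to verify directly that the ansatz $(t_1\omega,\dots,t_k\omega)$ solves \eqref{problem} precisely when $(t_1,\dots,t_k)$ solves the algebraic system \eqref{algebric system}, and then invoke the existence results already available. First I would substitute $u_i = t_i\omega$ into the $i$-th equation of \eqref{problem}. Since $\la_1=\dots=\la_k=\la$ and $\omega$ satisfies $-\Delta\omega + \la\omega = \omega^{2^*-1}$ (note $2^*-1 = 2p-1$ under \eqref{2p=2ast}), the left-hand side becomes $t_i(-\Delta\omega+\la\omega) = t_i\,\omega^{2p-1}$. The right-hand side becomes
\[
\mu_i (t_i\omega)^{2p-1} + \sum_{j\ne i}\beta_{ij}(t_i\omega)^{p-1}(t_j\omega)^{p}
= \Big(\mu_i t_i^{2p-1} + \sum_{j\ne i}\beta_{ij}t_i^{p-1}t_j^{p}\Big)\omega^{2p-1}.
\]
Dividing through by $t_i\,\omega^{2p-1}$ (legitimate since $\omega>0$ in $\om$ and $t_i>0$), the equation holds on all of $\om$ if and only if $\mu_i t_i^{2p-2} + \sum_{j\ne i}\beta_{ij}t_i^{p-2}t_j^{p} = 1$, which is exactly the $i$-th line of \eqref{algebric system}. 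Hence the ansatz is reduced entirely to solving \eqref{algebric system}.

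Next I would produce the solution of \eqref{algebric system}. By Lemma \ref{existence algebric} (stated in the excerpt), the system \eqref{algebric system} admits a solution $(t_1,\dots,t_k)$ with all $t_i>0$, for arbitrary $\mu_i,\beta_{ij}>0$; in fact one may take the distinguished solution $\vec{\tilde{\mathbf t}}$ that attains $d_k$. Combining this with the previous paragraph, $(t_1\omega,\dots,t_k\omega)$ satisfies the PDE system in \eqref{problem}. It remains to check the sign and boundary conditions: since $\omega$ is the positive least energy solution of \eqref{Brezis-Nirenberg problem 1}, we have $\omega>0$ in $\om$ and $\omega=0$ on $\partial\om$ (and $\omega\in H_0^1(\om)$, so $t_i\omega\in H_0^1(\om)$); because each $t_i>0$, it follows that $t_i\omega>0$ in $\om$ and $t_i\omega=0$ on $\partial\om$. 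Thus $(t_1\omega,\dots,t_k\omega)$ is a genuine positive solution of \eqref{problem} of the synchronized form \eqref{synchronized form}.

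I do not expect a serious obstacle here: the substitution is a short computation, and the only nontrivial input — solvability of the algebraic system for arbitrary cooperative coefficients — is quarantined in Lemma \ref{existence algebric}, which is invoked as a black box. The one point requiring a word of care is the passage from "the identity holds after dividing by $t_i\omega^{2p-1}$" back to "the PDE holds in $\om$", i.e. that cancelling the positive factor $\omega^{2p-1}$ is reversible; this is immediate since $\omega$ is strictly positive in the open set $\om$. (The regularity needed to call $t_i\omega$ a classical/strong solution is inherited verbatim from the regularity of $\omega$ as a solution of the Brézis–Nirenberg problem.) If one also wishes to remark that this solution is in fact a least energy solution — which is not asserted in the statement of Theorem \ref{symmetric case} — that would require comparing $E(t_1\omega,\dots,t_k\omega) = d_k\cdot N B_1 / (\dots)$-type quantities against $B$, and is deferred to the later parts of the paper where the sharp upper bound on $B$ is established.
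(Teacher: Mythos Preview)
Your proposal is correct and follows essentially the same approach as the paper: substitute the synchronized ansatz into \eqref{problem}, reduce to the algebraic system \eqref{algebric system}, and invoke Lemma \ref{existence algebric} to obtain a positive solution $(t_1,\dots,t_k)$. In fact your write-up is more explicit than the paper's, which simply asserts that the construction works once Lemma \ref{existence algebric} is in hand.
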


\begin{remark}  \quad

	\begin{itemize}
		\item[(1)] Denote $\RR =(\beta_{ij})_{k\times k}$ as the coupling matrix, where $\beta_{ij}=\beta_{ji} $  and $ \beta_{ii}=\mu_i$.
		For the special case $N=4$,~Guo, Luo and Zou \cite{Guo2018} proved that the least energy $B$ is attained by a symmetrized form solution \eqref{synchronized form},~under the assumptions that
		\begin{equation}\label{matrix condition}
			\RR=(\beta_{ij})_{k\times k}~~\hbox{is invertible and}~~\sum_{j=1}^{k}(\RR^{-1})_{ij}>0,\quad i=1,2,\dots,k.
		\end{equation}
		Actually, under these assumptions, the algebric system has pairs of solutions $(\pm t_1,\pm t_2,\dots,\pm t_k)$, then the signed solutions $(\pm t_1\omega,\dots,\pm t_k\omega)$ are nontrivial signed solutions of \eqref{problem}.

		\item[(2)] If $\la\le -\la_1(\om)$, we can obtain sign-changing solutions of synchronized form.
	\end{itemize}
\end{remark}

We shall give a classification result of the positive least energy solution of the \eqref{problem}.

\begin{theorem}\label{symmetric theorem existence and uniqueness}
	Assume that $N\ge 5$, $-\la_1(\Omega)<\la_1=\la_2=\cdots=\la_k:=\la <0$ and $\mu_i,\beta_{ij}>0$.
	Then the positive least energy solution of the system \eqref{problem} must be of the synchronized form
		\begin{equation*}
				(\tilde{t}_1\omega,\tilde{t}_2\omega,\dots,\tilde{t}_k\omega),
		\end{equation*}
	where $\omega$ is a positive least energy solution of \eqref{Brezis-Nirenberg problem 1} and $(\tilde{t}_1,\tilde{t}_2,\dots,\tilde{t}_k)$ is defined in \eqref{least algebric solution}.

	Moreover, assume additionally that $\beta_{ij}:=\beta>0$ and $\Omega\subset \RN $ is a ball, then there exists $\beta_{k}>0$, such that the positive least energy solution of the system \eqref{problem} is unique for $\beta>\beta_k$.
\end{theorem}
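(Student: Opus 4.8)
\noindent\emph{Plan of proof.} The plan is to push the problem down to the finite-dimensional algebraic system \eqref{algebric system} via the Br\'ezis--Nirenberg structure. I would first record $S_\la:=\inf\{\|u\|_\la^2/(\int_\om|u|^{2^*})^{2/2^*}: u\in H^1_0(\om)\setminus\{0\}\}$, where $\|u\|_\la^2:=\int_\om(|\nabla u|^2+\la u^2)$; since $-\la_1(\om)<\la<0$ we have $S_\la>0$, and for $N\ge5$ the Br\'ezis--Nirenberg theorem gives that $S_\la$ is attained, its minimizers being (after scaling) exactly the positive least energy solutions $\omega$ of \eqref{Brezis-Nirenberg problem 1}, for which $\int_\om\omega^{2^*}=S_\la^{N/2}$ and $B_1=\frac1N S_\la^{N/2}$. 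For the upper bound on $B$ I would substitute $(\tilde t_1\omega,\dots,\tilde t_k\omega)$, where $\vec{\tilde{\textbf{t}}}$ is from \eqref{least algebric solution} and solves \eqref{algebric system} by Lemma \ref{existence algebric}, into \eqref{problem}: a direct check using $-\Delta\omega+\la\omega=\omega^{2^*-1}$ shows it is a positive solution lying on $\MR$, and $E(\tilde t_1\omega,\dots,\tilde t_k\omega)=NB_1\GR_k(\vec{\tilde{\textbf{t}}})$; since on $\PR_k$ one has $\GR_k(\vt)=\frac1N\sum_i t_i^2$ (because $p-1=\frac2{N-2}$), this equals $NB_1d_k$, so $B\le NB_1d_k$.

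For the matching lower bound, take any $\vu\in\MR$; the Nehari identities give $E(\vu)=\frac1N\sum_i\|u_i\|_\la^2$. Set $a_i:=\int_\om|u_i|^{2^*}$ and define $t_i>0$ by $a_i=t_i^{2p}S_\la^{N/2}$. The Sobolev inequality $\|u_i\|_\la^2\ge S_\la a_i^{2/2^*}$ gives $E(\vu)\ge B_1\sum_i t_i^2$; combining this same inequality with the Nehari identity $\|u_i\|_\la^2=\mu_i a_i+\sum_{j\ne i}\beta_{ij}\int_\om|u_i|^p|u_j|^p$ and with H\"older's inequality $\int_\om|u_i|^p|u_j|^p\le a_i^{1/2}a_j^{1/2}$, then dividing by $S_\la^{N/2}$, gives $t_i^2\le\mu_i t_i^{2p}+\sum_{j\ne i}\beta_{ij}t_i^p t_j^p$ for each $i$, i.e. $P_k(\vt)\le0$. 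Since $2p>2$, $P_k(s\vt)>0$ as $s\to0^+$, so by continuity $s_0\vt\in\PR_k$ for some $s_0\in(0,1]$, whence $Nd_k\le\sum_i(s_0 t_i)^2\le\sum_i t_i^2$; therefore $E(\vu)\ge B_1\sum_i t_i^2\ge NB_1d_k$. This yields $B=NB_1d_k$, attained by $(\tilde t_1\omega,\dots,\tilde t_k\omega)$.

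Now let $\vu$ be a positive least energy solution, so that every inequality above is an equality. Equality in $E(\vu)\ge B_1\sum_i t_i^2$ forces $\|u_i\|_\la^2=S_\la a_i^{2/2^*}$, hence each $u_i$ is a positive minimizer of $S_\la$; equality in the $s_0$-step forces $s_0=1$, hence $P_k(\vt)=0$ with every summand $\le0$, so every summand vanishes and $\vt$ solves \eqref{algebric system}; substituting this back and using $\beta_{ij}>0$ forces equality in every H\"older inequality, so $|u_i|\propto|u_j|$ for all $i,j$. By positivity the $u_i$ are then positive multiples of one minimizer of $S_\la$, i.e. of a positive least energy solution $\omega$ of \eqref{Brezis-Nirenberg problem 1}, so $\vu=(\tilde t_1\omega,\dots,\tilde t_k\omega)$ with $\vt$ a solution of \eqref{algebric system}; and $E(\vu)=NB_1d_k$ together with $\vt\in\PR_k$ forces $\GR_k(\vt)=d_k$, i.e. $\vt=\vec{\tilde{\textbf{t}}}$. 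The step I expect to be most delicate here is precisely this bookkeeping: one must check that the Sobolev inequality for every component, H\"older's inequality for every pair $(i,j)$, and the rescaling onto $\PR_k$ are all forced to be equalities, and then extract from the H\"older equalities the proportionality that collapses $\vu$ to the synchronized form.

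For the final claim, with $\beta_{ij}\equiv\beta>0$ and $\om$ a ball, the positive least energy solution $\omega$ of \eqref{Brezis-Nirenberg problem 1} is unique by classical results (radial symmetry by the moving plane method, then ODE uniqueness), so by the first part it suffices to show that \eqref{algebric system} has a unique positive solution for $\beta$ large. Setting $\e=1/\beta$ and rescaling $t_i=\e^{1/(2p-2)}\tau_i$ turns \eqref{algebric system} into $\e\mu_i\tau_i^{2p-2}+\tau_i^{p-2}\sum_{j\ne i}\tau_j^p=1$. I would first establish uniform two-sided a priori bounds $0<c\le\tau_i\le1$ for small $\e$ (the bound $\tau_i\le1$ from $\tau_i^{p-2}\sum_{j\ne i}\tau_j^p\le1$ and $1<p<2$ by an iteration; the lower bound because otherwise all $\tau_i\to0$, contradicting a positive lower bound on $\max_i\tau_i$). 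As $\e\to0$ the equations converge to $\tau_i^{p-2}\sum_{j\ne i}\tau_j^p=1$, whose only positive solution is $\tau_i\equiv(k-1)^{-1/(2p-2)}$ (since $g(s)=s+s^{(2-p)/p}$ is strictly increasing), and which is nondegenerate (the Jacobian has eigenvalues $(k-1)(2p-2)>0$ and $p(k-2)-2(k-1)<0$). A compactness plus implicit-function-theorem argument then gives a unique positive solution of \eqref{algebric system}, hence a unique positive least energy solution of \eqref{problem}, for all $\beta$ larger than some $\beta_k$. The delicate point here is the uniform a priori bound on the rescaled unknowns, which is exactly what turns the local uniqueness coming from nondegeneracy into uniqueness for all large $\beta$.
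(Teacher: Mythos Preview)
Your classification argument is correct and shares the paper's core strategy: reduce $B$ to the finite-dimensional quantity $d_k$ via the Sobolev inequality for $S_\la$ together with H\"older, then read off the synchronized form from the equality cases. The organizational difference is that you work directly on $\MR$ with the $k$ componentwise Nehari identities, whereas the paper first passes to the single-constraint set $\MR'$ via the identity $B=\hb=\inf_{\MR'}E$ established in the proof of Theorem~\ref{main theorem}; your route is therefore more self-contained, since it does not invoke the mountain-pass machinery. For the uniqueness clause the paper simply quotes \cite[Proposition~3.3]{Wu2019a} for the uniqueness, when $\beta_{ij}\equiv\beta$ is large, of the positive solution of \eqref{algebric system} attaining $d_k$, while you sketch a direct proof by rescaling $t_i=\beta^{-1/(2p-2)}\tau_i$, establishing uniform two-sided bounds on $\tau_i$, and applying the implicit function theorem at the nondegenerate symmetric solution $\tau_i\equiv(k-1)^{-1/(2p-2)}$ of the limiting system; this is a valid and more transparent route to the same conclusion (and in fact yields the stronger statement that \emph{all} positive solutions of \eqref{algebric system} coincide for large $\beta$).
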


\begin{remark}\quad
	\begin{itemize}
	 	\item[(1)] For the special case $N=4$,~Guo, Luo and Zou \cite{Guo2018} classified the positive least energy solution of the synchronized form \eqref{synchronized form},~under the assumptions \eqref{matrix condition} and $\RR$ is positively or negatively definite.
	 	In particular, the positive least energy solution is unique if $\Omega$ is a ball in $\mathbb{R}^4$.
		\item[(2)] Recently, Tavares and You \cite{Tavares2020} also obtained the classification result for the special case $N=4$, under the same assumption as our result. But the uniqueness problem was not covered in \cite{Tavares2020}.
	 \end{itemize} 
\end{remark}

\vskip 0.1in

Now let us consider the general case $-\la_1(\om)<\la_1,\la_2,\cdots,\la_k<0$.

\begin{theorem}\label{main theorem}
Assume that $N\ge 5$ and \eqref{2p=2ast}. 	 
Suppose that $-\la_1(\om)<\la_1,\la_2,\dots,\la_k<0$ and $\mu_i>0$.~Then system \eqref{problem} has a positive least energy solution $\vu$ with $E(\vu)=B$ for any $\beta_{ij}>0$.
\end{theorem}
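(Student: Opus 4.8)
The plan is to prove existence of a positive least energy solution for the general case $-\la_1(\om)<\la_1,\dots,\la_k<0$ by a concentration--compactness analysis of a minimizing sequence for $B$ on $\MR$, combined with an induction on $k$ and, crucially, a sharp upper bound on $B$. First I would record that $B>0$: since each $\la_i>-\la_1(\om)$, the quadratic form $\int_\om(|\nabla u_i|^2+\la_iu_i^2)$ is equivalent to $\|u_i\|_{H^1_0}^2$, and the Nehari constraint together with the Sobolev inequality gives a uniform positive lower bound for $\frac1N\int_\om\sum(|\nabla u_i|^2+\la_iu_i^2)$ on $\MR$. Next, take a minimizing sequence $\vu^{(n)}\in\MR$; by the Ekeland variational principle we may assume it is a Palais--Smale sequence for $E$ restricted to $\MR$, hence (by the usual argument that $\MR$ is a natural constraint away from the degenerate locus) a genuine PS sequence for $E$ at level $B$. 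It is bounded in $H$, so $\vu^{(n)}\rightharpoonup\vu$ weakly, and $\vu$ is a (possibly semitrivial or trivial) solution of \eqref{problem}.

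The heart of the matter is to rule out loss of compactness. Write $\vu^{(n)}=\vu+\vec{\textbf{w}}^{(n)}$ with $\vec{\textbf{w}}^{(n)}\rightharpoonup 0$; by the Brezis--Lieb lemma applied to each $|u_i|^{2^*}$ and to each cross term $|u_i|^p|u_j|^p$, the energy and the PS condition split as $E(\vu^{(n)})=E(\vu)+E_\infty(\vec{\textbf{w}}^{(n)})+o(1)$, where $E_\infty$ is the functional with $\la_i$ removed (the limit-system functional in $\RN$). Standard concentration--compactness then shows $\vec{\textbf{w}}^{(n)}$ either vanishes (giving strong convergence, and we are done once we also check $\vu$ is nontrivial, i.e. all components survive) or concentrates as a sum of bubbles, each carrying energy at least the least energy of the limit system restricted to some nonempty subset $I\subseteq\{1,\dots,k\}$ of indices. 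This is where the induction enters: for $|I|<k$ the least energy of the $|I|$-coupled limit system has already been characterized (and, via the Brezis--Nirenberg least energy $B_1$ and the algebraic constant $d_{|I|}$, computed), and for $|I|=k$ the bubble energy is $d_k\cdot S^{N/2}$-type quantity. So if compactness failed we would get $B\ge E(\vu)+(\text{energy of a nontrivial limit-system solution on some }I)$.

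Therefore the decisive step — and the main obstacle — is to establish a strict upper bound $B<$ (the least energy of any such bubble split), precisely the ``more accurate upper bound of the least energy'' the introduction advertises. Concretely, using a Talenti-type instanton $U_\e$ concentrated at an interior point, scaled by the vector $\vec{\tilde{\textbf{t}}}$ from \eqref{least algebric solution}, one builds a test function $(\tilde t_1 U_\e,\dots,\tilde t_k U_\e)$, projects it onto $\MR$, and exploits the Brezis--Nirenberg mechanism: the $\la_i<0$ terms produce a negative correction of order $\e^{2}$ (for $N\ge5$), which strictly lowers the energy below $d_k\cdot(\text{Sobolev level})$, and in turn below $E(\vu)$ plus the energy of any lower bubble. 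Here one must check two comparisons: (i) $B<d_k S^{N/2}$ where $S$ is the best Sobolev constant — this kills a full $k$-component bubble; and (ii) $B$ is strictly less than $B_I+B_{I^c}$-type sums obtained by combining a genuine solution on $I$ with a bubble on $I^c$, which is where the monotonicity ``the least energy decreases as $k$ grows'' is used, together with the inductive hypothesis. Once both strict inequalities hold, the only surviving alternative in the concentration--compactness dichotomy is strong convergence $\vu^{(n)}\to\vu$ with every component nontrivial; then $\vu$ attains $B$, and by the maximum principle (each equation being of the form $-\Delta u_i+\la_iu_i=$ nonnegative nonlinearity, with $\la_i+\la_1(\om)>0$ controlling the linear part) one upgrades $|u_i|$ to a positive solution, completing the proof.
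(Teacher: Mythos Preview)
Your overall strategy --- PS sequence, Br\'ezis--Lieb splitting, comparison with the limit problem, induction on $k$ --- matches the paper's, but two concrete choices differ and your version has friction at both.

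First, the paper does \emph{not} minimize over the $k$-constraint set $\MR$ via Ekeland. It works instead with the single-constraint set $\MR'=\{\vu\neq \vec{\textbf{0}}: E'(\vu)\vu=0\}$, shows that the mountain-pass level satisfies $\hb=\inf_{\MR'}E\le B$, and obtains a Palais--Smale sequence at level $\hb$ from the mountain-pass theorem directly. Your step ``$\MR$ is a natural constraint, hence Ekeland gives a free PS sequence'' requires showing that $k$ Lagrange multipliers vanish simultaneously; when $1<p<2$ (i.e.\ $N\ge 5$) the relevant $k\times k$ matrix built from $G_i'(\vu)(0,\dots,u_j,\dots,0)$ is not obviously nondegenerate along a minimizing sequence, and the paper bypasses this entirely. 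With the single constraint $G(\vu)=E'(\vu)\vu$ the multiplier argument is a one-line computation, and the paper uses exactly that computation at the end, applied to $(|u_1|,\dots,|u_k|)$, to upgrade the minimizer on $\MR'$ to a genuine critical point.

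Second, your comparison (ii) is not the one actually needed, and the paper's splitting is simpler than full concentration--compactness. The key lemma (Lemma~\ref{key lemma}) proves only $\hb<A$ and $\hb<\bar{B}_\tau$ for every $1\le\tau\le k-1$; no inequalities of type $B<B_I+B_{I^c}$ are used. After Br\'ezis--Lieb one has $\hb=E(\vu)+\frac{1}{N}\sum_i b_i$ with each $b_i\ge 0$, so $E(\vu)\le \hb$. If $\vu=\vec{\textbf{0}}$, the remainder rescales onto $\NR'$ and gives $\hb\ge A'=A$, a contradiction. If exactly $\tau<k$ components of $\vu$ are nonzero, then $\vu$ solves a $\tau$-subsystem, so $\hb\ge E(\vu)\ge \bar{B}_\tau$, again a contradiction. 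Hence all $u_i\not\equiv 0$, and $E(\vu)=\hb=B$. The bound $\hb<A$ is obtained essentially as you propose, via cut-off Aubin--Talenti profiles scaled by the vector $\tilde t$; but the bound $\hb<\bar{B}_{k-1}$ is obtained differently: take a positive least energy solution $(\omega_1,\dots,\omega_{k-1})$ of a $(k-1)$-subsystem --- this is where induction enters --- and test with $(\omega_1,\dots,\omega_{k-1},s\phi)$ for $|s|$ small; the cross terms $\beta_{ik}|\omega_i|^p|s\phi|^p$ give a negative correction of order $|s|^p$, strictly lowering the energy below $B_{\mu_1\dots\mu_{k-1}}$. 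No limit-system bubble on a complementary index set appears.
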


\begin{remark}  In \cite{Clapp2019,Wu2017}, the authors obtained the existence of positive least energy solutions of the system \eqref{problem} for the purely competitive case $\beta_{ij}<0$.  \end{remark} 

\vskip0.1in


For each $\Theta\subsetneqq \{1, . . . ,k\}$,    we obtain the following $(k -|\Theta |)$-coupled system
	\begin{equation}\label{problem k Abstract}
		\left\{
		\begin{aligned}
			-\Delta u_i & +\lambda_iu_i=\mu_i u_i^{2p-1}+\sum_{j=1,j\ne i}^{k} \beta_{ij} u_{i}^{p-1}u_{j}^{p} \quad \hbox{in}\;\Omega,\\
			u_i&>0~~ \hbox{in}\; \Omega \quad \hbox{and}\quad u_i=0~~ \hbox{on}\;\partial\Omega, \quad i,j\in \{1,2,\cdots, k\}\backslash \Theta,
		\end{aligned}
		\right.
	\end{equation}
by replacing $\la_i, \mu_i,\beta_{ij},\beta_{ji}$ with 0 if $i \in \Theta $, where $ |\Theta| $ is the cardinality of $\Theta $. 

\vskip0.1in
Note that the nontrivial solutions of \eqref{problem k Abstract} can be extended to the semi-trivial solutions of $k$-coupled system \eqref{problem}.
\vskip0.1in
Denote $\tau=k-|\Theta|$. Indeed, we can establish $C_{k}^{\tau}$ different $\tau$-coupled systems from \eqref{problem}.
For every fixed $\Theta$, we denote $\{i_1,\dots,i_{\tau}\}=\{1,2,\dots,k\}\backslash \Theta$ and $ B_{\mu_{i_1}\dots\mu_{i_\tau}} $ as the least energy of the $\tau $-coupled system \eqref{problem k} with coefficients $(\mu_{i_1},\dots,\mu_{i_\tau})$, that is,
\begin{equation}
	B_{\mu_{i_1}\dots\mu_{i_\tau}}:=\inf\{E(\vu)~:~\vu=(u_1,\dots,u_k)~\hbox{solves}~\eqref{problem}~~\hbox{and}~u_i=0~\hbox{iff}~i\in \Theta \}.
\end{equation}
Denote
\begin{equation}
	\bar{ B}_\tau :=	\min\limits_{\forall \{l_1,\dots,l_{\tau}\}\subsetneqq\{1,2,\dots,k\} } \{B_{\mu_{l_1}\dots\mu_{l_\tau}} \}.
\end{equation}
Next we  need to obtain a more refined estimates on  the least energy of the $k$-coupled system.
 \begin{theorem}\label{energy decreases}
	Assume that $N\ge 5$ and \eqref{2p=2ast}.  Suppose  that $-\la_1(\om)<\la_1,\la_2,\dots,\la_k<0$ and $\mu_{i},\beta_{ij}>0$.
	Then the least energy of \eqref{problem} has an upper bound
	\begin{equation}
		B<\min\{\bar{ B}_1,\bar{ B}_2,\dots,\bar{ B}_{k-1}\}.
	\end{equation}
	To be more specific,
	\begin{equation*}
		B<\bar{ B}_{k-1}<\dots<\bar{ B}_1.
	\end{equation*}
	Moreover,
	\begin{equation*}
		B:=B_{\mu_1\mu_2\dots\mu_k}<B_{\mu_1\mu_2\dots\mu_{k-1}}
		<B_{\mu_1\mu_2\dots\mu_{k-2}}<\dots<B_{\mu_1\mu_2}<B_{\mu_1}.
	\end{equation*}
\end{theorem}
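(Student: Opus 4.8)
The plan is to prove the chain of strict inequalities by induction on the number of coupled components, using the construction of an explicit test configuration that couples an optimal solution of a $\tau$-component subsystem with a suitably scaled bump concentrated away from it. The core observation is that if $\vec{\textbf{v}}=(v_{i_1},\dots,v_{i_\tau})$ is a positive least energy solution of a $\tau$-coupled system attaining $B_{\mu_{i_1}\dots\mu_{i_\tau}}$, and we pick an index $i_{\tau+1}$ not already in use, then we want to build from $\vec{\textbf{v}}$ a point on the Nehari manifold $\MR$ of the $(\tau+1)$-coupled system whose energy is strictly less than $B_{\mu_{i_1}\dots\mu_{i_\tau}}$. Since all coupling constants $\beta_{ij}>0$ are cooperative, adding a new component with even a tiny amplitude decreases the relevant Nehari-scaling factors, hence decreases energy; the cooperative sign is exactly what makes the estimate go the right direction, in contrast to the competitive case.

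First I would make the induction precise: the base case $B_{\mu_1}<\infty$ is just the Br\'ezis--Nirenberg energy $B_1$ (finite, attained). For the inductive step, assume $\vec{\textbf{v}}$ attains the least energy of the $\tau$-coupled system with indices $I=\{i_1,\dots,i_\tau\}$. Fix $i_0\notin I$ and a cutoff $\varphi\in C_0^\infty(\om)$ with $\varphi\not\equiv 0$ whose support is disjoint from (a neighborhood of) the concentration region — more precisely, since $\vec{\textbf{v}}$ is a fixed smooth solution, choose $\varphi$ supported in a ball $B_\rho(x_0)\subset\om$ and rescale $\varphi_\e(x)=\e^{-(N-2)/2}\varphi((x-x_0)/\e)$ in the Aubin--Talenti fashion; by the classical Br\'ezis--Nirenberg estimates (valid precisely because $N\ge 5$, which gives the right power count $1<p<2$ and the favorable lower-order term), the functional $u\mapsto \tfrac12\int(|\nabla u|^2+\la_{i_0}u^2)$ restricted suitably has the usual expansion. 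The key quantitative input is that, because of the disjointness of supports (or near-disjointness after a more careful cutoff and a small-error estimate), the cross terms $\int |v_{i_j}|^p|\varphi_\e|^p$ are of lower order, so the enlarged vector, after applying the Nehari rescaling $(s_{i_1}v_{i_1},\dots,s_{i_\tau}v_{i_\tau},s_{i_0}\varphi_\e)$, lies on $\MR_{\tau+1}$ with each $s_{i_j}$ close to $1$ and $s_{i_0}$ bounded, and the resulting energy equals $B_{\mu_{i_1}\dots\mu_{i_\tau}}$ plus a strictly negative contribution of order $\e^{N-2}$ (or the appropriate Br\'ezis--Nirenberg gain) coming from the added component. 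This yields $B_{\mu_{i_1}\dots\mu_{i_\tau}i_0}<B_{\mu_{i_1}\dots\mu_{i_\tau}}$, and taking minima over index sets gives $\bar B_{\tau+1}<\bar B_\tau$. Applying this with the full index set shows $B=B_{\mu_1\dots\mu_k}<\bar B_{k-1}$, and iterating along the specific chain $\{1,\dots,k-1\}\supset\{1,\dots,k-2\}\supset\cdots$ gives the displayed chain $B_{\mu_1\dots\mu_k}<B_{\mu_1\dots\mu_{k-1}}<\cdots<B_{\mu_1}$.

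The main obstacle, and where the argument needs care, is making the "add a small disjoint bump and Nehari-rescale" step quantitatively rigorous: one must verify that the rescaling coefficients $(s_{i_1},\dots,s_{i_\tau},s_{i_0})$ solving the Nehari constraints for the $(\tau+1)$-system actually exist, are unique (or at least that a valid branch exists), and depend continuously on $\e$ so that as $\e\to0$ they converge to $(1,\dots,1,\bar s)$ with $\bar s>0$ finite; this is essentially an implicit-function-theorem argument on the algebraic system governing the Nehari scaling, and it is precisely the place where the cooperative sign $\beta_{ij}>0$ and the structure captured by Lemma \ref{existence algebric} (the solvability of the algebraic system \eqref{algebric system}) enter. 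One must also control the cross-interaction energy $\int \beta_{i_0 i_j}|s_{i_j}v_{i_j}|^p|s_{i_0}\varphi_\e|^p$: with genuinely disjoint supports this vanishes identically, simplifying everything, but if a clean disjoint-support choice is incompatible with $\vec{\textbf{v}}$ being the actual minimizer (it need not be — $\vec{\textbf{v}}$ is a fixed solution, not something we get to choose the support of) one instead estimates this term as $o(\e^{N-2})$ using $v_{i_j}\in L^\infty$ and the decay of $\varphi_\e$. Once these estimates are in place, strictness is immediate because the leading negative term dominates, and the monotone chain follows by applying the one-step inequality repeatedly.
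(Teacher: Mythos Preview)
Your approach has a genuine gap: the disjoint-support strategy works in exactly the wrong direction. If $\varphi_\e$ has support disjoint from that of the minimizer $\vec{\textbf{v}}$, then all cross terms $\int_\om \beta_{i_0 i_j}|v_{i_j}|^p|\varphi_\e|^p$ vanish identically, the Nehari system for the enlarged vector decouples, and the energy of the projected configuration equals
\[
B_{\mu_{i_1}\dots\mu_{i_\tau}} \;+\; \frac{1}{N}\int_\om \bigl(|\nabla(s_{i_0}\varphi_\e)|^2 + \lambda_{i_0}(s_{i_0}\varphi_\e)^2\bigr),
\]
which is \emph{strictly larger} than $B_{\mu_{i_1}\dots\mu_{i_\tau}}$, not smaller. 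The ``Br\'ezis--Nirenberg gain'' you invoke is only the statement that this second summand lies slightly below $\tfrac{1}{N}\mu_{i_0}^{-(N-2)/2}S^{N/2}$; it is not a negative quantity. Making the supports merely near-disjoint so that the cross terms are $o(\e^{N-2})$ does not rescue the argument: you are then comparing a positive new-component energy of order $\tfrac{1}{N}\mu_{i_0}^{-(N-2)/2}S^{N/2}$ against a negligible correction.

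The paper's argument (Lemma~\ref{key lemma}) runs in the opposite direction. One takes $\phi\in H^1_0(\om)$ positive with support \emph{overlapping} that of $\vec{\textbf{v}}$, sets the new component equal to $s\phi$ with $s$ a small real parameter, and projects onto the \emph{relaxed} manifold $\MR'$ (a single constraint $E'(\vu)\vu=0$) by a single scalar $t(s)$. Because the cross terms $\int_\om \beta_{i_0 i_j}|v_{i_j}|^p|s\phi|^p$ are now of order $|s|^p$ and positive (this is where the cooperative sign $\beta_{ij}>0$ enters), they inflate the denominator in the Nehari quotient, forcing $t(s)<1$; a direct expansion yields
\[
E\bigl(t(s)\vec{\textbf{v}},\,t(s)s\phi\bigr)=B_{\mu_{i_1}\dots\mu_{i_\tau}}-\Bigl(\tfrac{1}{2}-\tfrac{1}{N}\Bigr)|s|^p\int_\om\sum_{j}2\beta_{i_0 i_j}|v_{i_j}|^p|\phi|^p+o(|s|^p)<B_{\mu_{i_1}\dots\mu_{i_\tau}}.
\]
So the strict decrease is produced precisely by the nonvanishing cross terms---exactly the terms your construction suppresses. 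Note also that the paper never projects onto the multi-constraint manifold $\MR$: it proves $\hb<\bar B_{k-1}$ on $\MR'$ and then identifies $\hb=B$ via the mountain-pass argument in the proof of Theorem~\ref{main theorem} (see \eqref{hb=B}), which sidesteps the implicit-function-theorem issues you anticipated.
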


By Theorem \ref{energy decreases}, it's interesting to see that the least energy of the $k$-coupled system may decrease as $k$ grows.\\

Since the nonlinearity and the coupling term are both critical in \eqref{problem}, the existence of nontrivial solutions of \eqref{problem} depends heavily on the existence of the least energy solution of the following limit problem
	\begin{equation}\label{RN system}
		\left\{
		\begin{aligned}
			-\Delta u_i&=\mu_i u^{2p-1}+\sum_{j=1,j\ne i}^{k} \beta_{ij} u_{i}^{p-1}u_{j}^{p}, \quad x\in\R^N,\\
			u_i&\in D^{1,2}(\R^N),\quad i=1,2,\cdots, k,
		\end{aligned}
		\right.
	\end{equation}
where $ D^{1,2}(\R^N):=\{u\in L^{2^*}(\R^N)~:~|\nabla u|\in L^{2}({\R^N})  \} $~with norm $|| u||_{D^{1,2}}:=(\int_{\R^N}|\nabla u|^2 \mathrm{d}x )^{1/2} $.
~Let $S$ be the sharp constant of $D^{1,2}(\R^N)\hookrightarrow  L^{2^*}(\R^N)$
	\begin{equation}\label{sobolev embedding}
		\int\limits_{\R^N}|\nabla u|^2 \mathrm{d}x \ge S~\bigg(\int\limits_{\R^N}|u|^{2^*} \mathrm{d}x\bigg )^{\frac{2}{2^*}}.
	\end{equation}
		For $ \varepsilon>0$ and $y\in \R^N $, we consider the Aubin-Talenti instanton \cite{Aubin1976,willem1997minimax}
	$U_{\varepsilon,y} \in D^{1,2}(\R^N) $ defined by 
	\begin{equation}\label{Aubin-Talenti}
		U_{\varepsilon,y}(x):=[N(N-2)]^{\frac{N-2}{4}}\bigg( \dfrac{\varepsilon}{\varepsilon^2+|x-y|^2 } \bigg)^{\frac{N-2}{2}}.
	\end{equation}
	Then $U_{\varepsilon,y}$ satisfies $-\Delta u=|u|^{2^*-2}u $ in $\R^{N}$ and
	\begin{equation}\label{estimate of Aubin-Talenti}
		\int\limits_{\R^N}|\nabla U_{\varepsilon,y}|^2 \mathrm{d}x = \int\limits_{\R^N}|U_{\varepsilon,y}|^{2^*} \mathrm{d}x=S^{N/2}.
	\end{equation}
Clearly \eqref{RN system} has semi-trivial solutions 
	$$(\mu_1^{\frac{2-N}{4} }U_{\varepsilon,y},0,\dots,0),~(0,\mu_2^{\frac{2-N}{4} }U_{\varepsilon,y},0,\dots,0),~\dots~,(0,\dots,0,\mu_k^{\frac{2-N}{4} }U_{\varepsilon,y}) .$$
	Here, we are only concerned with nontrivial solutions of \eqref{RN system}.
	Define $D:=(D^{1,2}(\R^N) )^k $ and a $C^1 $ functional $I:D\to \R $ given by
	\begin{equation}\label{energy functional I}
		I(\vu):=\dfrac{1}{2}\int\limits_{\R^N}\sum_{i=1}^{k}|\nabla u_i|^2
	    -\dfrac{1}{2p}\int\limits_{\R^N}(\sum_{i=1}^{k}\mu_i|u_i|^{2p}+\sum_{i=1,j>i}^{k} 2\beta_{ij} |u_{i}|^{p}|u_{j}|^{p} ).
	\end{equation}
We consider the following set as in \cite{Lin2005}:
	\begin{equation}\label{set N}
		\NR=\{\vu \in D	~:~ u_i\not\equiv 0,I'(\vu)(0,\dots,0,u_i,0,\dots,0)=0,~~i=1,2,\dots,k \}.
	\end{equation}
	Then any nontrivial solution of \eqref{RN system} belongs to $\NR$. Similarly $\NR\ne \emptyset .$
	We set 
	\begin{equation}\label{A}
		A:=\inf\limits_{\vu\in \NR}I(\vu)
		=\inf\limits_{\vu\in \NR}\frac{1}{N}\int\limits_{\R^N}\sum_{i=1}^{k}|\nabla u_i|^2.
	\end{equation}
	Then we have the following theorem, which plays an important role in the proof of Theorem \ref{main theorem}.

\begin{theorem}\label{main theorem_limit}
	Assume that $N\ge 5$ and \eqref{2p=2ast}.  If $\mu_i>0$ and $\beta_{ij}>0$,~then system \eqref{RN system} has a positive least energy solution $\vec{\textbf{U}}$ with $I(\vec{\textbf{U}})=A$,~which is radially symmetric decresing. Moreover,
	\begin{itemize}
		\item[(1)] the positive least energy solution of \eqref{RN system} must be the least energy synchronized type solution of the form
		$(\tilde{t}_1U_{\varepsilon,y},\tilde{t}_2U_{\varepsilon,y},\dots,\tilde{t}_kU_{\varepsilon,y})$, where $(\tilde{t}_1, \tilde{t}_2,\dots,\tilde{t}_k)$ is defined in \eqref{least algebric solution}.

		\item[(2)] there exists $\beta_1>0$, and for any $0<\beta_{ij}=\beta<\beta_1$, there exists a solution $(t_1(\beta),t_2(\beta),\dots,t_k(\beta))$ of \eqref{algebric system}, such that
		$$  I(t_1(\beta)U_{\varepsilon,y},t_2(\beta)U_{\varepsilon,y},\dots,t_k(\beta)U_{\varepsilon,y})>A=I(\vec{\textbf{U} } ) . $$
		That is,~$(t_1(\beta)U_{\varepsilon,y},t_2(\beta)U_{\varepsilon,y},\dots,t_k(\beta)U_{\varepsilon,y}) $ is a different positive solution of \eqref{RN system} with respect to $ \vec{\textbf{U} }$.
	\end{itemize}
\end{theorem}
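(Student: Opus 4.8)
The plan is to establish existence of a positive least energy solution of the limit system \eqref{RN system}, and then to derive the classification statements (1)--(2). For existence, I would work on the Nehari-type set $\NR$ and show that the infimum $A$ is attained. The first step is to prove the crucial energy bound
\[
  A = \inf_{\vu\in\NR} I(\vu) \le d_k\, S^{N/2},
\]
where $d_k$ is the number defined via $\GR_k$ and $\PR_k$ in the introduction; indeed, plugging the synchronized test vector $(t_1 U_{\varepsilon,y},\dots,t_k U_{\varepsilon,y})$ with $(t_1,\dots,t_k)$ any solution of the algebraic system \eqref{algebric system} and using \eqref{estimate of Aubin-Talenti}, one computes $I$ explicitly in terms of $\GR_k$ evaluated on the scaled point, and $\vec{\tilde{\textbf t}}$ attaining $d_k$ by Lemma \ref{existence algebric} gives $A\le d_k S^{N/2}$. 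The reverse-type inequality, needed to rule out loss of compactness, is that any nontrivial $\vu\in\NR$ satisfies $I(\vu)\ge d_k S^{N/2}$: using $\vu\in\NR$ one has $I(\vu)=\frac1N\sum_i\|\nabla u_i\|_2^2$, and the Sobolev inequality \eqref{sobolev embedding} applied componentwise together with the Nehari constraints reduces the problem to minimizing $\GR_k$ over $\PR_k$ after the substitution $t_i = (\|\nabla u_i\|_2^2)^{1/2}S^{-(N-2)/4}$ — roughly speaking $A = d_k S^{N/2}$ exactly.

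The second step is the concentration-compactness / minimizing-sequence argument. Take a minimizing sequence $\vu_n\in\NR$; by the Ekeland variational principle one may assume it is also a Palais--Smale sequence for $I$ on $\NR$, hence (since the Nehari constraints are non-degenerate, the $\mu_i,\beta_{ij}$ being positive) a PS sequence for $I$ on $D$. Standard bounds give $\vu_n$ bounded in $D$; pass to a weak limit $\vec{\textbf U}$. The usual Brezis--Lieb splitting plus the sharp constant analysis shows that the only obstruction is vanishing or splitting into a "solution at level $A$" plus a "bubble" carrying energy a positive multiple of $S^{N/2}$; the strict inequality $A < d_k S^{N/2} + (\text{any semi-trivial bubble energy})$, which follows from Theorem \ref{energy decreases}-type strict monotonicity (here in the $\R^N$ setting, $\bar B$'s replaced by the corresponding $\R^N$ quantities $\bar A_\tau$), forces $\vu_n\to\vec{\textbf U}$ strongly and $\vec{\textbf U}$ nontrivial with $I(\vec{\textbf U})=A$. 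Positivity of each component follows by replacing $u_i$ by $|u_i|$ (which does not increase $I$ and keeps membership in $\NR$ since all coefficients are positive) and the strong maximum principle; radial symmetry and monotone decrease follow from the moving-plane method / Schwarz symmetrization, again using positivity of the coupling.

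For the classification (1), once $\vec{\textbf U}$ is a positive least energy solution, I would show it must be synchronized: since $I(\vec{\textbf U})=A=d_kS^{N/2}$ and equality in all the Sobolev inequalities used in Step 1 must hold simultaneously, each $U_i$ must be an Aubin--Talenti instanton, i.e. $U_i = t_i U_{\varepsilon_i,y_i}$; plugging back into \eqref{RN system} forces all $\varepsilon_i=\varepsilon$, all $y_i=y$, and $(t_1,\dots,t_k)$ to solve \eqref{algebric system} and attain $d_k$, hence equals $\vec{\tilde{\textbf t}}$ up to the obvious identification. For (2), I would solve \eqref{algebric system} perturbatively for small $\beta = \beta_{ij}$: near $\beta = 0$ the system decouples into $\mu_i t_i^{2p-2}=1$, and the implicit function theorem yields a branch $(t_1(\beta),\dots,t_k(\beta))$; then a direct expansion of $I(t_1(\beta)U_{\varepsilon,y},\dots)=\GR_k(t_1(\beta),\dots)\,S^{N/2}$ in powers of $\beta$ shows this particular solution sits strictly above $d_kS^{N/2}$ for $0<\beta<\beta_1$ (the coupling lowers $d_k$ but not along this branch), giving a genuinely different positive solution. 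The main obstacle is Step 2: extracting a nonvanishing strong limit in the doubly-critical setting, where both the pure power term and the coupling term are critical, so the Brezis--Lieb decomposition must be carried out carefully for the mixed terms $|u_i|^p|u_j|^p$ and the energy-level strict inequality (inherited from the refined estimates behind Theorem \ref{energy decreases}) is exactly what closes the compactness gap.
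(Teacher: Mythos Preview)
Your overall strategy differs from the paper's. The paper does not minimize $I$ over $\NR$ directly; instead it approximates by \emph{subcritical} problems on the unit ball (exponent $2p-2\varepsilon$), where the compact embedding yields a positive radial minimizer $\vu_\varepsilon$ for each $\varepsilon>0$ (Theorem~\ref{theorem=radilly in limit ball}), and then performs a blow-up analysis as $\varepsilon\to0$: by Pohozaev the $L^\infty$ norms must diverge, and after rescaling one obtains a solution of \eqref{RN system} at level $A'=A$. Your direct variational route can be made to work, but not via the compactness argument you propose.

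The gap is in Step~2. In $\R^N$ with a purely critical nonlinearity the functional $I$ is invariant under the full dilation--translation group, so Palais--Smale sequences at \emph{any} level can have weak limit zero. Your inequality ``$A<d_kS^{N/2}+(\text{semi-trivial bubble energy})$'' is vacuously true once you have shown $A=d_kS^{N/2}$, but it cannot exclude the scenario in which the entire minimizing sequence concentrates into a single bubble at level $d_kS^{N/2}=A$ with zero weak limit: there is nothing left over to contradict, and the Brezis--Lieb splitting yields no information. This is precisely why the paper detours through subcritical problems on a bounded domain, where compactness is available.

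That said, Step~2 is unnecessary for existence. Your Step~1, once carried out through the scale-invariant quotient on $\NR'$ (the componentwise substitution $t_i=(\|\nabla u_i\|_2^2)^{1/2}S^{-(N-2)/4}$ you wrote does not land in $\PR_k$; use instead $A\ge A'$ together with the quotient form of $A'$ and Sobolev plus H\"older to reduce to $d_k$), already gives $A=A'=d_kS^{N/2}$. But the synchronized vector $(\tilde t_1U_{\varepsilon,y},\dots,\tilde t_kU_{\varepsilon,y})$ lies in $\NR$ (it solves \eqref{RN system}) with $I=d_kS^{N/2}=A$, so it \emph{is} a positive radially decreasing least-energy solution --- no limit extraction is needed. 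Parts~(1) and~(2) then proceed exactly as you outline and match the paper: equality in Sobolev and H\"older forces each component to be the same Aubin--Talenti bubble up to scalar, and the implicit function theorem at $\beta=0$ produces the non-minimal branch.
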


\begin{remark}\quad

	\begin{itemize}
		\item[(1)] The multiplicity of solution of algebric system \eqref{algebric system} arises much more difficulty when dealing with the classification problem.

		For the special case $\beta_{ij}=\beta>0$, Wu proved that the positive least energy solution of \eqref{RN system} must be of synchronized type $(t_1U_{\varepsilon,y},t_2U_{\varepsilon,y},\dots,t_kU_{\varepsilon,y})$ where $(t_1, t_2,\ldots,t_k)$ is solves \eqref{algebric system}. (See \cite[Theorem 1.2, Proposition 3.2]{Wu2019a}.)
		
		We point out that the solution $(t_1,t_2,\ldots,t_k)$ of \eqref{algebric system} might not attain $d_{k}$, then $(t_1U_{\varepsilon,y},t_2U_{\varepsilon,y},\dots,t_kU_{\varepsilon,y})$ might not be of the least energy, since the method of Lagrange multiplier yields only a necessary condition for constrained problems. 

		It is interesting that Theorem \ref{main theorem_limit}~(2) coincides with the multiplicity of solution of algebric system \eqref{algebric system}, and admits the existence of another positive solution of \eqref{RN system} which is not of the least energy.

		\item[(2)] Theorem \ref{main theorem_limit}~(1) generalizes 
		\cite[Theorem 1.6]{Chen2015} in the sense that it asserts that the positive least energy of \eqref{RN system} must be the synchronized type solution for the purely cooperative case $\beta_{ij}>0 $.

		\item[(3)] We note that a similar existence result of ground state (which seems probably to be semi-trivial) is obtained by He and Yang \cite{he2018quantitative} via concentration-compactness lemma.
		We remark that our method is quite different from that.
		We illustrate the positive least energy of synchronized type and any other result concerning \eqref{RN system} cannot be found in \cite{he2018quantitative}. 


	\end{itemize}
\end{remark}

In rest of this paper we prove these theorems. 
In Sect. \ref{section 2}, we obtain the fundamental Lemma \ref{existence algebric} on the existence of positive solution of the algebric system \eqref{algebric system} with arbitrary $\beta_{ij}>0 $, which generalizes \cite[Proposition 3.2]{Wu2019a} and gives out a detailed and geometric explanation. Theorem \ref{non-existence theorem of positive solutions} and \ref{symmetric case} are proved subsequently.
Since the $k$-coupled case is much more delicated than $2$-couple case, we shall introduce {\it the idea of induction} in the proof of Theorem \ref{main theorem} and \ref{main theorem_limit}, respectively in Sect. \ref{section 3} and \ref{section 4}.
{\it The key idea} is to give a more accurate upper bound of the least energy, which is inspired by \cite{Chen2015,Lin2005}.
See Lemma \ref{main lemma 1} and \ref{key lemma}. 
We mention that the idea of induction is also introduced in \cite{Clapp2019,Tavares2020,Wu2017} to obtain the positive least energy solution of the purely competitive system. 
Theorem \ref{energy decreases} is a byproduct of Lemma \ref{key lemma} and Theorem \ref{main theorem}.
Theorem \ref{symmetric theorem existence and uniqueness} is proved in Sect. \ref{section 4}.

\section{Preliminaries}\label{section 2}

In this section, we introduce some fundamental results and prove Theorem \ref{non-existence theorem of positive solutions} and Theorem \ref{symmetric case}.

\begin{lemma}\label{pohozaev lemma}(the Pohozaev identity)

	 If ~$\om $ is star-shaped with respect to some point
	  $ y_0 \in \R^N $ and $ \vu\in H $ be a solution of \eqref{problem},~then $\vu$ satisfies
	  \begin{equation}\label{pohozaev}
		  \begin{aligned}
		    &\frac{1}{2p} \int\limits_{\om}\sum_{i=1}^{k}|\nabla u_i|^2~\mathrm{d}x
		    +\frac{1}{2N}\int\limits_{\partial \om}\sum_{i=1}^{k}\bigg|\frac{\partial u_i}{\partial \vec{\textbf{n}}} \bigg|^2(x-y_0)\cdot \vec{ \textbf{n}}~\mathrm{d}S  \\
		    &=
		    \frac{1}{2p}\int\limits_{\om}(\sum_{i=1}^{k}\mu_i|u_i|^{2p}+\sum_{i=1,j>i}^{k} 2\beta_{ij} |u_{i}|^{p}|u_{j}|^{p} )~\mathrm{d}x
		    -\frac{1}{2}\int\limits_{\om}\sum_{i=1}^{k}\la_iu_i^2~\mathrm{d}x,
		  \end{aligned}
	  \end{equation}
	  where $ \vec{\textbf{n}}$ denotes the unit outward normal.
\end{lemma}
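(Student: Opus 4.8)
The plan is to apply the classical Rellich--Pohozaev multiplier $(x-y_0)\cdot\nabla u_i$ to the $i$-th equation of \eqref{problem}, integrate, sum over $i$, and use the symmetry $\beta_{ij}=\beta_{ji}$ to recombine the coupling terms into an exact divergence. After a translation I would assume $y_0=0$ and abbreviate $x$ for $x-y_0$. The first step is to record the regularity input: since $u_i\in H^1_0(\om)\subset L^{2^*}(\om)$, the right-hand side of the $i$-th equation lies locally in $L^{2N/(N+2)}$, so a standard elliptic bootstrap together with the smoothness of $\partial\om$ shows that each $u_i$ is classical in $\om$ and regular up to the boundary; hence the multiplier is admissible and all the boundary integrals below make sense. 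If one wishes to be scrupulous, the integrations by parts can first be carried out on $\{x\in\om:\mathrm{dist}(x,\partial\om)>\delta\}$ and the limit $\delta\to0$ taken.

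Next is the core computation. Multiplying the $i$-th equation by $x\cdot\nabla u_i$ and integrating over $\om$, two integrations by parts yield the familiar identity
\[
\int_\om(-\Delta u_i)(x\cdot\nabla u_i)\,\mathrm dx=-\frac{N-2}{2}\int_\om|\nabla u_i|^2\,\mathrm dx-\frac12\int_{\partial\om}|\nabla u_i|^2\,(x\cdot\vec{\textbf n})\,\mathrm dS,
\]
where on $\partial\om$ the condition $u_i=0$ forces $\nabla u_i=\dfrac{\partial u_i}{\partial\vec{\textbf n}}\vec{\textbf n}$, hence $|\nabla u_i|^2=\bigl|\tfrac{\partial u_i}{\partial\vec{\textbf n}}\bigr|^2$ there; the linear term gives $\int_\om\la_iu_i(x\cdot\nabla u_i)\,\mathrm dx=\tfrac{\la_i}{2}\int_\om x\cdot\nabla(u_i^2)\,\mathrm dx=-\tfrac{N\la_i}{2}\int_\om u_i^2\,\mathrm dx$, the boundary term vanishing since $u_i=0$ on $\partial\om$. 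For the right-hand side I would observe that $\mu_iu_i^{2p-1}+\sum_{j\ne i}\beta_{ij}u_i^{p-1}u_j^p=\partial_{u_i}G(\vu)$ with $G(\vu):=\tfrac1{2p}\bigl(\sum_i\mu_iu_i^{2p}+\sum_{i<j}2\beta_{ij}u_i^pu_j^p\bigr)$ --- this is exactly where $\beta_{ij}=\beta_{ji}$ enters --- so that $\sum_i\bigl(\mu_iu_i^{2p-1}+\sum_{j\ne i}\beta_{ij}u_i^{p-1}u_j^p\bigr)(x\cdot\nabla u_i)=x\cdot\nabla G(\vu)$ and, integrating by parts (the boundary term again vanishing), $\sum_i\int_\om\bigl(\cdots\bigr)(x\cdot\nabla u_i)\,\mathrm dx=-N\int_\om G(\vu)\,\mathrm dx$. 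Summing the three contributions over $i$, multiplying by $-1$ and dividing by $N$ produces
\[
\frac{N-2}{2N}\sum_i\int_\om|\nabla u_i|^2\,\mathrm dx+\frac1{2N}\int_{\partial\om}\sum_i\Bigl|\frac{\partial u_i}{\partial\vec{\textbf n}}\Bigr|^2(x\cdot\vec{\textbf n})\,\mathrm dS=\int_\om G(\vu)\,\mathrm dx-\frac12\sum_i\la_i\int_\om u_i^2\,\mathrm dx.
\]
Finally I would invoke \eqref{2p=2ast}, i.e.\ $2p=2^*$, which is equivalent to $\tfrac{N-2}{2N}=\tfrac1{2p}$; substituting this into the first term and restoring $x-y_0$ for $x$ gives exactly \eqref{pohozaev}.

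The only genuinely delicate point is the regularity justification --- that each $u_i$ is smooth enough up to the smooth boundary for the boundary integrals to be defined and the integrations by parts legitimate --- which, because of the critical growth, requires the $L^p$-bootstrap and Calder\'on--Zygmund/Schauder estimates near $\partial\om$ (or, equivalently, the approximation-by-interior-domains argument noted above). Everything else is an elementary manipulation of divergences, and the star-shapedness of $\om$ is not actually needed for the identity itself (it will only be used afterwards to exploit the sign of $(x-y_0)\cdot\vec{\textbf n}$ on $\partial\om$).
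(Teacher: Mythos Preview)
Your proof is correct and follows exactly the approach indicated in the paper: multiply the $i$-th equation by $(x-y_0)\cdot\nabla u_i$, integrate by parts, and sum over $i$. The paper merely states this in one line without writing out the intermediate identities or the regularity justification, so your version is simply a fully expanded form of the same argument.
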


\begin{proof}
	The Pohozaev identity \eqref{pohozaev} follows by multiplying $i$th equation
	$$-\Delta u_i+\lambda_iu_i=\mu_i u_i^{2p-1}+\sum_{j=1,j\ne i}^{k} \beta_{ij} u_{i}^{p-1}u_{j}^{p} $$
	by $  x\cdot \nabla u_i $ and integrating by parts.
\end{proof}

Fisrt we prove Theorm \ref{non-existence theorem of positive solutions}, which shows the nonexistence of positive solution of system \eqref{problem}  under some assumptions.\\

{\it{Proof of Theorem \ref{non-existence theorem of positive solutions} }}
	(1)  Let $\phi_1 $ be the first eigenfunction of $-\Delta$ with the Dirichlet boundary condition with respect to $\la_1(\om) $.
 	
 	By multiplying each equations in \eqref{problem} with $\phi_1 $  and integrating over $\om $, we obtain
 	$$   (\la_1(\om)+\la_i )\int\limits_{\om} u_i\phi_1 \mathrm{d}x
 		=\int\limits_{\om} (\mu_i u_i^{2p-1}\phi_1+\sum_{j=1,j\ne i}^{k} \beta_{ij} u_{i}^{p-1}u_{j}^{p}\phi_1) ~\mathrm{d}x . $$
 	It is easy to see that when $\la_i\le -\la_1(\om),~\mu_i>0 ~\hbox{and}\; \beta_{ij}>0 $~ for~ $i,j=1,2,\dots,k, $ system \eqref{problem} has no positive solutions.

 	(2) Assume that ~$\om $ is star-shaped with respect to some point{}
	  $ y_0 \in \R^N $ and $ \vu=(u_1,u_2,\dots,u_k) $ be a solution of \eqref{problem}, then $\vu $ satisfies
	  \begin{equation*}
	  	 \int\limits_{\om}\sum_{i=1}^{k}(|\nabla u_i|^2+\la_iu_i^2)~\mathrm{d}x
		  =\int\limits_{\om}(\sum_{i=1}^{k}\mu_i|u_i|^{2p}+\sum_{i,j=1,i< j}^{k} 2\beta_{ij} |u_{i}|^{p}|u_{j}|^{p} )~\mathrm{d}x.
	  \end{equation*}
 If $\la_i\ge0 $, combined with the Pohozaev identity \eqref{pohozaev} we have
	  \begin{equation*}
	  	0\le \frac{1}{2N}\int\limits_{\partial \om}\sum_{i=1}^{k} \bigg|\frac{\partial u_i}{\partial \vec{ \textbf{n} }} \bigg|^2(x-y_0)\cdot \vec{ \textbf{n}}~\mathrm{d}S
	  	=(\frac{1}{2p}-\frac{1}{2}) \int\limits_{\om}\sum_{i=1}^{k}\la_iu_i^2~\mathrm{d}x \le0,
	  \end{equation*}
	  then system \eqref{problem} has no positive solutions.
	  \hfill$\Box$

\vskip 0.2in

Now we turn to study the existence of solution to the algebric system \eqref{algebric system}, which may play a fundamental role in the subsequent research.
We generalize \cite[Proposition 3.2]{Wu2019a} to arbitrary $\beta_{ij}>0$. Our proof is more specific and gives out a geometric explanation.

\begin{lemma}\label{existence algebric}

	Assume $\mu_i>0 $ and $ \beta_{ij}>0$, then algebric system 
		\begin{equation*}
			\left\{
				\begin{aligned}
				    \mu_it_i^{2p-1}+\sum\limits_{j=1,j\ne i}^{k}\beta_{ij} t_i^{p-1}t_j^{p} = t_i,~~&\hbox{for}\; i=1,2,\dots,k,\\
				    t_i\in~\R,~~&\hbox{for}\; i=1,2,\dots,k
				\end{aligned}
			\right. 
		\end{equation*}
	has a nontrivial positive solution.
\end{lemma}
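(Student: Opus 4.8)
The plan is to recast the algebraic system as a variational problem and find a minimizer of $\GR_k$ on the constraint set $\PR_k$, then show the minimizer lies in the positive cone and that the Lagrange-multiplier relation reduces to exactly \eqref{algebric system}. Concretely, first I would restrict attention to the closed positive orthant $\{t_i\ge 0\}$ and work with the function $P_k(\vt)=\sum_i(t_i^2-\mu_i|t_i|^{2p})-\sum_{i<j}2\beta_{ij}|t_i|^p|t_j|^p$. Since $p>1$ and $2p>2$, along any ray $\vt=r\vec v$ with $\vec v$ in the positive orthant and $|\vec v|=1$ one has $P_k(r\vec v)=r^2\,c_1(\vec v)-r^{2p}\,c_2(\vec v)$ with $c_1(\vec v)=\sum v_i^2=1>0$ and $c_2(\vec v)=\sum_i\mu_i v_i^{2p}+\sum_{i<j}2\beta_{ij}v_i^p v_j^p>0$ (here positivity of all $\mu_i,\beta_{ij}$ is used in an essential way), so $P_k$ is positive for small $r$, vanishes at exactly one $r(\vec v)>0$, and is negative thereafter. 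Hence $\PR_k$ intersected with the positive orthant is a compact ``sphere-like'' hypersurface, radially parametrized by $\vec v$, and it is nonempty.

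Next I would minimize $\GR_k$ over $\PR_k\cap\{t_i\ge 0\}$. On this set $\GR_k=\sum_i(\frac{t_i^2}{2}-\frac{\mu_i t_i^{2p}}{2p})-\frac1{2p}\sum_{i<j}2\beta_{ij}t_i^p t_j^p$; using the constraint $P_k=0$ one can rewrite $\GR_k(\vt)=(\frac12-\frac1{2p})\sum_i t_i^2=\frac{p-1}{2p}\sum_i t_i^2>0$ on $\PR_k$, so $\GR_k$ is bounded below by $0$ there and its infimum $d_k$ is attained by compactness (the constraint surface is compact in the orthant). Let $\vec{\tilde{\textbf t}}$ be a minimizer. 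It remains to rule out that any coordinate is zero: if $\tilde t_{i_0}=0$ for some index, then since $\beta_{ij}>0$ I can perturb by slightly increasing $\tilde t_{i_0}$ and rescaling radially to stay on $\PR_k$; the radial rescaling factor changes like $1-c\,\tilde t_{i_0}^{\,p-1}\cdot(\text{small})$ to leading order, and because the coupling terms contribute at order $t_{i_0}^p$ (lower than the $t_{i_0}^2$ gain from the other components only if one is careful)—this is the delicate point—one checks $\sum t_i^2$ strictly decreases, contradicting minimality. So $\vec{\tilde{\textbf t}}$ lies in the open positive orthant.

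Finally, Lagrange multipliers: at the interior minimizer there is $\nu$ with $\nabla\GR_k(\vec{\tilde{\textbf t}})=\nu\,\nabla P_k(\vec{\tilde{\textbf t}})$. Computing, $\partial_{t_i}\GR_k=t_i-\mu_i t_i^{2p-1}-\sum_{j\ne i}\beta_{ij}t_i^{p-1}t_j^p$ and $\partial_{t_i}P_k=2t_i-2p\mu_i t_i^{2p-1}-2p\sum_{j\ne i}\beta_{ij}t_i^{p-1}t_j^p$; since $P_k=0$ and $\GR_k\ne 0$ one gets $\nabla P_k\ne 0$ at the minimizer and solving for the common factor yields $\mu_i\tilde t_i^{2p-2}+\sum_{j\ne i}\beta_{ij}\tilde t_i^{p-2}\tilde t_j^p=1$ for all $i$, i.e. $\vec{\tilde{\textbf t}}$ solves \eqref{algebric system}. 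The main obstacle I anticipate is precisely the boundary-exclusion step: because $1<p<2$, the coupling monomials $t_i^p t_j^p$ are not smooth enough to make a naive second-order expansion work cleanly, and one must argue carefully (probably by an explicit one-parameter competitor path and a direct comparison of $\sum t_i^2$ before and after, rather than by differentiating) that turning on a vanishing component strictly lowers the objective. Everything else — compactness, positivity of $\GR_k$ on the constraint, and the Lagrange computation — is routine given $\mu_i,\beta_{ij}>0$ and $p>1$.
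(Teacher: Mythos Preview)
Your proposal is correct and follows essentially the same approach as the paper: minimize $\GR_k$ on $\PR_k$ (equivalently, minimize $\sum_i t_i^2$ on $\PR_k$, since $\GR_k=\frac{p-1}{2p}\sum_i t_i^2$ there), apply Lagrange multipliers, and exclude the boundary of the positive orthant via the one-parameter competitor path exploiting the $O(|s|^p)$ coupling terms. The paper organizes the boundary-exclusion step slightly differently, proving by induction the strict inequality $d_k<d_{k-1}$ (with exactly the perturbation you sketch: add a small $k$-th component $sl$, rescale by $f(s)$ to return to $\PR_k$, and expand $f(s)^{2p}$ to order $|s|^p$), and then observing that a minimizer with a vanishing coordinate would force $d_k\ge d_{k-1}$.

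One small correction: your justification ``$P_k=0$ and $\GR_k\ne 0$ imply $\nabla P_k\ne 0$'' is not valid as stated. The regularity of the constraint and the value of the multiplier both come instead from the Euler-type identity $\sum_i t_i\,\partial_i P_k=(2-2p)\sum_i t_i^2\ne 0$ on $\PR_k$; combined with $\sum_i t_i\,\partial_i \GR_k=P_k=0$ this forces the multiplier to vanish, whence $\nabla\GR_k(\vec{\tilde{\textbf t}})=0$ and the algebraic system follows. (Your ``common factor'' remark can also be made to work: writing $\partial_i\GR_k=t_i-B_i$ and $\partial_i P_k=2t_i-2pB_i$, the Lagrange relation gives $B_i=c\,t_i$ with $c$ independent of $i$, and then the constraint $P_k=0$ forces $c=1$.)
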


\begin{proof}
	Let 
	\begin{equation}\label{dk}
		d_k:=\inf\limits_{\vt\in\PR_k }\GR_k(\vt),
	\end{equation}
	where
	\begin{equation*}
		\GR_k(\vt)=\sum\limits_{i=1}^k(\frac{t_i^2}{2}-\frac{\mu_i|t_i|^{2p}}{2p})- \frac{1}{2p}\big(\sum\limits_{i=1,j>i}^{k}2\beta_{ij}|t_i|^{p}|t_j|^{p} \big),
	\end{equation*}
	\begin{equation*}
		\PR_k= \{\vt\in\R^k \backslash\{\vec{\textbf{0} } \} ~|~P_k(\vt):=\sum\limits_{i=1}^k(t_i^2-\mu_i|t_i|^{2p})-  \sum\limits_{i=1,j>i}^{k}2\beta_{ij}|t_i|^{p}|t_j|^{p}=0   \}.
	\end{equation*}
Denote $ \vec{\textbf{e}_i }=(0,\dots,0,\mu_i^{-\frac{1}{2p-2}},0,\dots,0) $,
it is easy to check that $\vec{\textbf{e}_i }\in \PR_k $ for all $i=1,2,\dots,k,$ which implies $\PR_k \ne \emptyset $ and the minimizing problem \eqref{dk} is well defined. Notice that
	\begin{equation*}
		\begin{aligned}
		    d_k=\inf\limits_{\vt\in\PR_k }\GR_k(\vt)
		    &=\inf\limits_{\vt\in\PR_k }\frac{1}{N}\sum_{i=1}^{k}t_i^2\\
			&=\inf\limits_{\vt\in\PR_k }\frac{1}{N} \bigg(\mathrm{dist}~(\vec{\textbf{0}},~\vt ) \bigg)^2.
		\end{aligned}
	\end{equation*}
Since $\PR_k\ne \emptyset $ and symmetric, the minimizing problem \eqref{dk} is reduced to a geometric problem, that is, to find the closest point to original point $\vec{\textbf{0}}$ in a nonempty set $\PR_k$. 
Then $d_k $ can be attained by some $\vec{\tilde{\textbf{t} }}  $ with $ \tilde{t}_{i}\ge0 $ for all $i=1,2,\dots,k,$ and $ \tilde{t}_{i}> 0 $ for some $i$.

By the method of Lagrange's multiplier, there exists a Lagrange multiplier $\gamma\in \R $ such that
	\begin{equation}\label{Lagrange}
		\nabla \GR_k(\vec{\tilde{\textbf{t}}}) -\gamma\nabla P_k(\vec{\tilde{\textbf{t}}}) =0,
	\end{equation}
then $\vec{\tilde{\textbf{t}}} $ satisfies the system
		\begin{equation}\label{original algebric system}
			\left\{
				\begin{aligned}
				    \mu_i \tilde{t}_i^{2p-1}+\sum\limits_{j=1,j\ne i}^{k}\beta_{ij} \tilde{t}_{i}^{p-1}\tilde{t}_j^{p} =\tilde{t}_{i} ,~~&\hbox{for}\; i=1,2,\dots,k,\\
				    \tilde{t}_{i}\ge 0,\quad \hbox{and}\quad \sum\limits_{i=1}^k\tilde{t}_{i}>0 ,~~&\hbox{for}\; i=1,2,\dots,k.
				\end{aligned}
			\right. 
		\end{equation}
In fact, for any nonzero $\vt\in \PR_{k}$, we assume that $\vt=(t_1,\ldots,t_k) $ with $t_{s}\ne 0$ for $ 1\le s\le k$, we see that
\begin{equation*}
	   t_i\partial_i P_k(\vt)=2t_i^2-2p\mu_it_i^{2p}-p\sum_{j=1,j\ne i}^{k}2\beta_{ij}t_i^p t_j^p,
\end{equation*}
then we have
\begin{equation*}
	\begin{aligned}
   	 \sum_{i=1}^s t_i\partial_i P_k(\vt)
   	 & = 2\sum_{i=1}^{s} t_i^2-2p\sum_{i=1}^s \mu_it_i^{2p}-p\sum_{i=1}^{s}\sum_{j=1,j\ne i}^{k}2\beta_{ij}t_i^p t_j^p\\
   	 & = 2\sum_{i=1}^{s} t_i^2-2p\sum_{i=1}^s \mu_it_i^{2p}-2p\sum_{i=1,j>i}^{s}2\beta_{ij}t_i^p t_j^p\\
   	 & =(2-2p)\sum_{i=1}^{s}t_i^2 \ne 0,
	\end{aligned}	   
\end{equation*}
which implies that if~$\vt\in\PR_k$ then $\nabla P_k(\vt)\ne \vec{\textbf{0}}$.~
Then by \eqref{Lagrange}, we have
	\begin{equation*}
		\partial_i\GR_k(\vec{\tilde{\textbf{t}}}) -\gamma\partial_i P_k(\vec{\tilde{\textbf{t}}}) =0.
	\end{equation*}
Thus $\partial_i\GR_k(\vec{\tilde{\textbf{t}}})=0$ if $\tilde{t}_i=0$, 
while
	\begin{equation*}
		0=P_k(\vec{\tilde{\textbf{t}}})=\sum_{i=1}^{s} \tilde{t}_i\partial_i\GR_k(\vec{\tilde{\textbf{t}}}) 
		=\gamma\sum_{i=1}^{s}\tilde{t}_i\partial_i P_k(\vec{\tilde{\textbf{t}}}),
	\end{equation*}
if~$\tilde{t}_1,\dots,\tilde{t}_s \ne 0$, that is
\begin{equation*}
	\begin{aligned}
	    (2-2p)\gamma\sum_{i=1}^{s}\tilde{t}_i^2=0,
	\end{aligned}
\end{equation*}
which implies that $\gamma=0$ and $\partial_i\GR_k(\vec{\tilde{\textbf{t}}})=0$ for $\tilde{t}_i\ne 0$. 
Hence, $\vec{\tilde{\textbf{t}}} $ satisfies \eqref{original algebric system}.
\vskip 0.1in

We claim that if $\beta_{{ij}}>0$, then $d_k<\min\{d_1,d_2,\dots,d_{k-1}\} $.

Our proof is inspired by \cite{abdellaoui2009}.
By the method of induction, it suffices to prove that $d_{k}<d_{k-1}$.
Assume that $d_{k-1}$ is attained by $(t_1,\dots,t_{k-1})$, then
$$d_{k-1}=\frac{1}{N}\sum\limits_{i=1}^{k-1}t_i^2=\frac{1}{N}(\sum\limits_{i=1}^{k-1} \mu_i|t_i|^{2p}+\sum\limits_{i=1,j>i}^{k-1}2\beta_{ij}|t_i|^{p}|t_j|^{p} ). $$
Note that for any $s\in \BR$, there exists $f(s)>0$ such that $f(s)(t_1,\dots,t_{k-1},sl)\in \PR_k$, where $l\in \BR\backslash\{0\}$.
In fact,
\begin{equation*}
	f(s)^{2p-2}=\frac{Nd_{k-1}+s^2l^2}{Nd_{k-1}+\mu_k|sl|^{2p}+\sum\limits_{i=1}^{k-1}2\beta_{ik}|t_i|^p|sl|^p}.
\end{equation*}
By direct computations we have
\begin{equation*}
	\lim_{s\to0}\frac{f'(s)}{|s|^{p-2}s}
	=-\frac{p}{(p-1)Nd_{k-1}}\sum_{i=1}^{k-1} \beta_{ik} |t_{i}|^{p}|l|^{p},
\end{equation*}
that is,
\begin{equation*}
	f'(s)=-\frac{p}{(p-1)Nd_{k-1}}\bigg(\sum_{i=1}^{k-1}\beta_{ik} |t_{i}|^{p}|l|^{p}\bigg)~
	|s|^{p-2}s~(1+o(1)),\quad \hbox{as}\;~s\to 0.
\end{equation*}
Note that $f(0)=1$, then
\begin{equation*}
	f(s)=1-\frac{1}{(p-1)Nd_{k-1}}\bigg(\sum_{i=1}^{k-1}\beta_{ik} |t_{i}|^{p}|l|^{p}\bigg)~
	|s|^{p}~(1+o(1)),\quad \hbox{as}\;~s\to 0.
\end{equation*}
This implies that
\begin{equation*}
	\begin{aligned}
	    f(s)^{2p}
	    & = 1-\frac{2p}{(p-1)Nd_{k-1}}\bigg(\sum_{i=1}^{k-1} \beta_{ik} |t_{i}|^{p}|l|^{p}\bigg)~|s|^{p}~(1+o(1)) \\
		& = 1-\frac{1}{d_{k-1}}
		\bigg(\sum_{i=1}^{k-1} \beta_{ik} |t_{i}|^{p}|l|^{p}\bigg)~
		|s|^{p}~(1+o(1)) \quad \hbox{as}\;~s\to 0,
	\end{aligned}
\end{equation*}
then we see that
\begin{equation*}
	\begin{aligned}
	    d_k
	    & \le \GR_k(f(s)t_1,\dots,f(s)t_{k-1},f(s)sl )\\
	    & = \frac{1}{N}f(s)^{2p} \bigg(Nd_{k-1}
	    + \mu_k |sl|^{2p} +\sum_{i=1}^{k-1} 2\beta_{ik}|t_{i}|^{p}|sl|^{p}\bigg)\\
	    & = d_{k-1}-(\frac{1}{2}-\frac{1}{N})\bigg(\sum_{i=1}^{k-1} 2\beta_{ik}|t_{i}|^{p}|l|^{p}\bigg)|s|^{p}+o(|s|^{p})\\
	    & <d_{k-1}\quad \hbox{as}~|s|>0 ~\hbox{small enough}.
	\end{aligned}
\end{equation*}
By the idea of induction, we see that $d_{k}<\min\{d_1,d_2,\dots,d_{k-1}\}$.
Next we prove that if $\beta_{ij}>0$, then $d_{k}$ is attained by $(\tilde{t}_{1},\tilde{t}_{2},\dots,\tilde{t}_{k})$, where $\tilde{t}_{i}>0 $ for all $i=1,2,\dots,k$.
Without loss of genrality, we assume by contradiction that $\tilde{t}_{1},\tilde{t}_{2},\dots,\tilde{t}_{k-1}>0,$ and $ \tilde{t}_{k}=0.$ Denote 
	\begin{equation*}
		\vec{\tilde{\textbf{t}}}_{k}=(\tilde{t}_{1},\tilde{t}_{2},\dots,\tilde{t}_{k-1},0),	\quad
		\vec{\tilde{\textbf{t}}}_{k-1}=(\tilde{t}_{1},\tilde{t}_{2},\dots,\tilde{t}_{k-1}),
	\end{equation*}
then $\vec{\tilde{\textbf{t}}}_{k-1}\in\PR_{k-1} $.
Notice that
	\begin{equation}
		\begin{aligned}
		    d_k=\inf\limits_{\vt\in\PR_k }\GR_k(\vt)
		    &=\GR_k(\vec{\tilde{\textbf{t}}}_{k})\\
		    &=\GR_{k-1}(\vec{\tilde{\textbf{t}}}_{k-1})
		    \ge~ \inf\limits_{\vt\in\PR_{k-1} }\GR_{k-1}(\vt)=d_{k-1},
		\end{aligned}
	\end{equation} 
while we see that 
			$$d_k <\min \{ d_1, d_2, \dots, d_{k-1} \}, $$
a contradiction. Hence we conclude that all $\tilde{t}_{i}>0 $.
\end{proof}

\vskip 0.2in

Now assume that $N\ge 4$, $-\la_1(\Omega)<\la_1=\la_2=\cdots=\la_k:=\la <0$.
We shall start to prove Theorem \ref{symmetric case}.
 \\

{\it Proof of Theorem \ref{symmetric case}}~
In the special case $N=4$, where $2p=2^*=4$, the nonlinear algebric problem \eqref{algebric system} reduces to 

\begin{equation*}
	\mu_it_i^2+\sum\limits_{j=1,j\ne i}^{k}\beta_{ij} t_j^{2} = 1,~~\hbox{for}\; i=1,2,\dots,k, 
\end{equation*}
which can be seen as
\begin{equation}\label{matrix function}
	\RR
		\begin{pmatrix}
		   t_1^2  \\ t_2^2 \\ \vdots \\ t_k^2
		\end{pmatrix}
	=
		\begin{pmatrix}
		   1  \\ 1 \\ \vdots \\ 1 
		\end{pmatrix},
\end{equation}
where $\RR=(\beta_{ij})_{k\times k} $ is the coupling matrix.

It is easy to check that if $\RR$ is invertible and the sum of each column of $\RR^{-1}$ is greater than 0, then \eqref{matrix function} has solutions
$( \pm t_1, \pm t_2, \dots, \pm t_k )$.

We can construct nontrivial solutions of \eqref{problem} as 
		\begin{equation}\label{plusminus}
			( \pm t_1 \omega, \pm t_2 \omega, \dots, \pm t_k \omega ),
		\end{equation}
where $\omega$ is the positive least energy solution of \eqref{Brezis-Nirenberg problem 1}.
Remark that these solutions are signed solutions and $(t_1 \omega, t_2 \omega, \dots, t_k \omega )$ is a positive solution of \eqref{problem}. Moreover, it's proved in \cite[Theorem 1.1]{Guo2018} that $(t_1 \omega, t_2 \omega, \dots, t_k \omega )$ is a positive least energy solution.

In the general case $N\ge 5$, Lemma \ref{existence algebric} admits a positive solution of the nonlinear algebric problem \eqref{algebric system}, we can also construct a positive solution of \eqref{problem} as
	\begin{equation*}
		(t_1\omega,t_2\omega,\dots,t_k\omega),
	\end{equation*}
where $\omega$ is the positive least energy solution of \eqref{Brezis-Nirenberg problem 1} and $(t_1,t_2,\dots,t_k)$ solves \eqref{algebric system}.

Moreover, we recall that in \cite{ambrosetti1986,capozzi1985}, the authors obtained a nontrivial solution for the Br{\'{e}}zis-Nirenberg problem \eqref{Brezis-Nirenberg problem 1} if $N\ge 4$ and $ \la<0$.
Note that \eqref{Brezis-Nirenberg problem 1} has a positive least energy solution if $-\la_1(\om)<\la<0$.
While any nontrivial solution of \eqref{Brezis-Nirenberg problem 1} is sign-changing if $\la\le -\la_1(\om)$(See \cite{Schechter2010}.)
Thus, we observe that if $\la\le -\la_1(\om)$, we can also construct a sign-changing solution of \eqref{problem} as
\begin{equation*}
		(t_1\bar{u},t_2\bar{u},\dots,t_k\bar{u}),
	\end{equation*}
where $\bar{u}$ is a nontrivial solution of \eqref{Brezis-Nirenberg problem 1} and $(t_1,t_2,\dots,t_k)$ solves \eqref{algebric system}.
	  \hfill$\Box$

\vskip 0.2in

\section{Proof of Theorem \ref{main theorem_limit}}\label{section 3}

Recall that	
	\begin{equation*}
		A:=\inf\limits_{\vu\in \NR}I(\vu)
		=\inf\limits_{\vu\in \NR}\frac{1}{N}\int\limits_{\R^N}\sum_{i=1}^{k}|\nabla u_i|^2,
	\end{equation*}
where
	\begin{equation*}
			\NR=\{\vu \in D	~:~ u_i\not\equiv 0,I'(\vu)(0,\dots,0,u_i,0,\dots,0)=0,~~i=1,2,\dots,k \}.
	\end{equation*}
Define 
	\begin{equation}\label{A'}
		A':=\inf\limits_{\vu\in \NR'}I(\vu),
	\end{equation}
where
	\begin{equation}\label{set N'}
		\NR'=\{\vu \in D \backslash \{ \vec{\textbf{0} }\}	~:~ I'(\vu)~\vu=0\}.
	\end{equation}
Note that $\NR \subset \NR' $, one has that $A'\le A$. By Sobolev inequality \eqref{sobolev embedding}, we have $A, A'>0. $

Define $B(0,R):= \{x\in \RN ~:~ |x|<R \}  $ and $H(0,R):= (H^1_0(B(0,R)))^k $.
Consider
	\begin{equation}\label{problem ball}
		\left\{
		\begin{aligned}
			-\Delta u_i&=\mu_i |u_i|^{2p-2}u_i+\sum_{j=1,j\ne i}^{k} \beta_{ij} |u_{i}|^{p-2}u_i|u_{j}|^{p}, \quad x\in B(0,R), \\
			u_i&\in H^1_0(B(0,R)), \quad i=1,2,\cdots, k,
		\end{aligned}
		\right.
	\end{equation}
and define 
	\begin{equation}\label{A'(R)}
		A'(R):=\inf\limits_{\vu\in \NR'(R)}I(\vu),
	\end{equation}
where
	\begin{equation}\label{set N'(R)}
		\begin{aligned}
		    \NR'(R)=\bigg \{ & \vu \in H(0,R) \backslash \{ \vec{\textbf{0} }\}	~:~
			\int\limits_{B(0,R)}\sum_{i=1}^{k}|\nabla u_i|^2~\mathrm{d}x\\
			  &-\int\limits_{B(0,R)}(\sum_{i=1}^{k}\mu_i|u_i|^{2p}+\sum_{i,j=1,i< j}^{k} 2\beta_{ij} |u_{i}|^{p}|u_{j}|^{p} )~\mathrm{d}x=0
			 \bigg\}.
		\end{aligned}	
	\end{equation}

\vskip 0.2in

We need the following lemma from \cite{Chen2015}, which still holds in our case.
\begin{lemma}\label{A'(R) are equivalent.}
	\quad $ A'(R)\equiv A' $ for all $R>0$.
\end{lemma}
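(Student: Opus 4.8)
}
The plan is to exploit the dilation invariance of the problem on $\R^N$ at the critical exponent, which is precisely the feature that makes $A'(R)$ independent of $R$. First I would show $A'(R) \le A'(R')$ for $R < R'$ by the trivial inclusion $H(0,R) \subset H(0,R')$ (extending functions by zero), so that $\NR'(R) \subset \NR'(R')$; hence $R \mapsto A'(R)$ is nonincreasing. The reverse inequality is the substantive direction, and here I would use the scaling $\vu \mapsto \vu_t$ where $(\vu_t)_i(x) := t^{\frac{N-2}{2}} u_i(tx)$. A direct computation using $2p = 2^*$ shows that this scaling preserves both $\int |\nabla u_i|^2$ and all the terms $\int \mu_i |u_i|^{2p}$, $\int \beta_{ij}|u_i|^p|u_j|^p$; consequently it maps $\NR'(R)$ bijectively onto $\NR'(R/t)$ and leaves $I$ invariant. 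Choosing $t = R/R'$ gives $A'(R) = A'(R')$ for every pair $R, R' > 0$, which is the claim. To conclude $A'(R) \equiv A'$, one lets $R \to \infty$: the constant value $A'(R)$ equals $\inf_{R>0} A'(R)$, and since $\bigcup_{R>0} H(0,R)$ is dense in $D = (D^{1,2}(\R^N))^k$ (smooth compactly supported functions are dense), one gets $\inf_{R>0} A'(R) = A'$.

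In more detail, for the density step I would argue as follows. Given any $\vu \in \NR'$ with $I(\vu)$ close to $A'$, approximate each component in $D^{1,2}(\R^N)$ by a function supported in some large ball $B(0,R)$; the constraint functional $J(\vu) := \int \sum |\nabla u_i|^2 - \int(\sum \mu_i|u_i|^{2p} + \sum_{i<j} 2\beta_{ij}|u_i|^p|u_j|^p)$ is continuous on $D$ by the Sobolev embedding \eqref{sobolev embedding}, and $I$ is continuous as well, so after a small rescaling by a factor $\to 1$ (to land the approximant exactly back on the constraint set, using that $J$ is negative for large dilations of a fixed nonzero function and positive for small ones, hence has a zero) one obtains an element of some $\NR'(R)$ with energy close to $I(\vu)$. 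This yields $\inf_{R>0} A'(R) \le A'$, and the opposite inequality $A' \le A'(R)$ for every $R$ is again immediate from $H(0,R) \subset D$ via extension by zero, together with $\NR'(R) \subset \NR'$.

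Since the lemma is quoted verbatim from \cite{Chen2015}, the honest route is simply to note that the argument there is insensitive to the number of components $k$ and to the precise value of $p$ as long as $2p = 2^*$: the only structural facts used are (i) the scaling $(\vu_t)_i(x) = t^{(N-2)/2}u_i(tx)$ preserves $J$ and $I$, and (ii) density of compactly supported functions. Both hold in the present $k$-coupled setting with $1 < p < 2$. The main obstacle, such as it is, is bookkeeping: verifying that under the scaling every cross term $\int_{\R^N} |u_i|^p |u_j|^p\,dx$ transforms with the same Jacobian factor $t^{-N}$ that cancels the factor $t^{N}$ coming from $t^{p(N-2)/2} \cdot t^{p(N-2)/2} = t^{p(N-2)} = t^N$ (using $p(N-2) = N$, i.e. $2p = 2^*$); once this is checked the invariance of $J$ and of $I$ is automatic and the rest is formal. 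I would therefore present the scaling computation explicitly for one representative term, remark that the gradient and pure-power terms are classical, and cite \cite{Chen2015} for the remaining routine details.
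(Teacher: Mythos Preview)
Your approach is essentially identical to the paper's: scaling invariance at the critical exponent gives $A'(R_1)=A'(R_2)$ for all $R_1,R_2>0$, and density of compactly supported functions in $D^{1,2}(\R^N)$ (together with projecting back onto the constraint) gives $A'(R)=A'$. One small slip: from $\NR'(R)\subset\NR'(R')$ for $R<R'$ you obtain $A'(R')\le A'(R)$, not $A'(R)\le A'(R')$ as written---your stated conclusion ``nonincreasing'' is the correct one, and in any case the scaling step renders this monotonicity observation redundant.
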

\begin{proof}
	 Take any $R_1>R_2$. By $\NR'(R_2)\subset\NR'(R_1)$, we have $A'(R_1)\le A'(R_2)$.
	 On the other hand, for any $(u_1,u_2,\ldots,u_k)\in \NR'(R_1)$,
	 we define
	 \begin{equation*}
	 	u'_i(x):=(\frac{R_1}{R_2})^{\frac{N-2}{2}}u_i(\frac{R_1}{R_2}x),
	 \end{equation*}
	 then it is standard to see that $(u'_1,u'_2,\ldots,u'_k)\in \NR'(R_2)$, and we have
	 \begin{equation*}
	 	A'(R_2)\le I(u'_1,u'_2,\ldots,u'_k)=I(u_1,u_2,\ldots,u_k),\quad \forall~(u_1,u_2,\ldots,u_k)\in \NR'(R_1).
	 \end{equation*}
	 That is, $A'(R_2)\le A'(R_1)$ and so $A'(R_1)=A'(R_2)$.

	 Clearly $A'\le A'(R)$. Let $(u_{1n},u_{2n},\ldots,u_{kn})$ be a minimizing sequence of $A'$.
	 We may assume that $u_{1n},u_{2n},\ldots,u_{kn}\in H^1_0(B(0,R_n))$ for some $R_{n}>0$.Then $u_{1n},u_{2n},\ldots,u_{kn}\in \NR'(R_n)$, and
	 \begin{equation*}
	 	A'=\lim_{n\to\iy}I(u_{1n},u_{2n},\ldots,u_{kn})\ge \lim_{n\to\iy}A'(R_n)\equiv A'(R).
	 \end{equation*}
	 Therefore, $A'(R)\equiv A'$ for all $R>0$. 
\end{proof}

\vskip 0.2in

Let $0\le \varepsilon<p-1$. Consider
	\begin{equation}\label{problem limit ball}
		\left\{
		\begin{aligned}
			-\Delta u_i&=\mu_i |u_i|^{2p-2-2\varepsilon}u_i+\sum_{j=1,j\ne i}^{k} \beta_{ij} |u_{i}|^{p-2-\varepsilon}u_i|u_{j}|^{p-\varepsilon}, \quad x\in B(0,1), \\
			u_i&\in H^1_0(B(0,1)), \quad i=1,2,\cdots, k,
		\end{aligned}
		\right.
	\end{equation}
and define
	\begin{equation}\label{A epsilon}
		A_{\varepsilon}:=\inf\limits_{\vu\in \NR'_{\varepsilon} }I_{\varepsilon} (\vu),
	\end{equation}
where
	\begin{equation}\label{I epsilon }
		\begin{aligned}
		    I_{\varepsilon}(\vu):=&
			\frac{1}{2}\int\limits_{B(0,1) }\sum_{i=1}^{k}|\nabla u_i|^2~\mathrm{d}x\\
			-&\frac{1}{2p-2\varepsilon} \int\limits_{B(0,1) }(\sum_{i=1}^{k}\mu_i|u_i|^{2p-2\varepsilon}+\sum_{i,j=1,i< j}^{k} 2\beta_{ij} |u_{i}|^{p-\varepsilon}|u_{j}|^{p-\varepsilon} )~\mathrm{d}x,
		\end{aligned}
	\end{equation}
	\begin{equation}\label{set N epsilon}
		\NR'_{\varepsilon}=\{\vu \in H(0,1) \backslash \{ \vec{\textbf{0} }\}	~:~  H_{\varepsilon}(\vu):= I'_{\varepsilon}(\vu)~\vu=0\}.
	\end{equation}

\vskip 0.2in

We shall introduce \textbf{the idea of induction}. We notice that the proof of Lemma \ref{main lemma 1} depends on the existence result Theorem \ref{theorem=radilly in limit ball} for the $(k-1)$-coupled system case.
\vskip0.1in
From now on, we assume that Lemma \ref{main lemma 1} and Theorem \ref{theorem=radilly in limit ball} hold true for $(k-1)$-coupled system. 
We shall give out the proof of Lemma \ref{main lemma 1} and Theorem \ref{theorem=radilly in limit ball} for $ k$-coupled system in the sequel. 
Then by idea of induction, these results hold true for arbitrary $k$-coupled system.

Denote $\bu_i\in H(0,1)$ as an vector, where the number of nonzero components of $\bu_i $ is $i$ for $i=1,2,\dots,k.$ 
Denote 
\begin{equation*}
	\begin{aligned}
		C_\varepsilon(i):&=\min\{\inf\limits_{\bu_i\in\NR'_{\varepsilon} }I_{\varepsilon}(\bu_i)\}\\
		&=\min\bigg\{\inf\limits_{(u_1,\dots,u_i,0,\dots,0) \in\NR'_{\varepsilon} }I_{\varepsilon}(u_1,\dots,u_i,0,\dots,0),\dots,\\
		&\inf\limits_{(0,\dots,0,u_1,\dots,u_i) \in\NR'_{\varepsilon} }I_{\varepsilon}(0,\dots,0,u_1,\dots,u_i)    \bigg\} .
	\end{aligned}
\end{equation*}
For the case $k=2$, $C_\varepsilon(1)=\min\{\inf\limits_{(u,0)\in\NR'_{\varepsilon} }I_{\varepsilon}(u,0),\inf\limits_{(0,v)\in\NR'_{\varepsilon} }I_{\varepsilon}(0,v) \}, C_\varepsilon(2)=A_\varepsilon$.

\begin{lemma}\label{main lemma 1}
	For any $0<\varepsilon<p-1 $, there holds
	\begin{equation}
		A_{\varepsilon}<\min\big\{C_\varepsilon(1),C_\varepsilon(2),\dots,C_\varepsilon(k-1)\big\}.
	\end{equation}
\end{lemma}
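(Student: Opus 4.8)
\textbf{Proof proposal for Lemma \ref{main lemma 1}.}

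The plan is to prove $A_\varepsilon<C_\varepsilon(i)$ for each $i=1,\dots,k-1$ by showing that whenever one has an $i$-component least energy configuration on $B(0,1)$, one can perturb it by switching on one additional component and thereby strictly lower the subcritical energy $I_\varepsilon$. First I would fix $i\le k-1$ and, by the induction hypothesis (Theorem \ref{theorem=radilly in limit ball} for the $(k-1)$-coupled, hence \emph{a fortiori} the $i$-coupled, system), pick a positive least energy solution $(w_1,\dots,w_i,0,\dots,0)\in\NR'_\varepsilon$ with $I_\varepsilon(w_1,\dots,w_i,0,\dots,0)=C_\varepsilon(i)$, which may be taken radially symmetric decreasing on $B(0,1)$. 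Since the value $C_\varepsilon(i)$ is realized by one of the $\binom{k}{i}$ choices of index block, it suffices to treat this one; the remaining index choice $\{i+1,\dots,k\}$ (or wherever) gives a free slot, say component $i+1$, to which we attach a small multiple $s\varphi$ of a fixed $\varphi\in C_0^\infty(B(0,1))$, $\varphi\not\equiv 0$, $\varphi\ge 0$.

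The key step is the expansion of the projection onto $\NR'_\varepsilon$. As in the proof of Lemma \ref{existence algebric} (the computation producing $f(s)$), for each small $s$ there is a unique $g(s)>0$ with $g(0)=1$ such that $g(s)(w_1,\dots,w_i,s\varphi,0,\dots,0)\in\NR'_\varepsilon$; writing $H_\varepsilon$ for the Nehari constraint, one has
\begin{equation*}
	g(s)^{2p-2\varepsilon-2}=\frac{\|\nabla\mathbf{w}\|^2+s^2\|\nabla\varphi\|^2}{\|\nabla\mathbf{w}\|^2+|s|^{2p-2\varepsilon}\mu_{i+1}\int\varphi^{2p-2\varepsilon}+|s|^{p-\varepsilon}\sum_{l=1}^{i}2\beta_{l,i+1}\int|w_l|^{p-\varepsilon}\varphi^{p-\varepsilon}}.
\end{equation*}
Because $1<p<2$ and $0<\varepsilon<p-1$ force $0<p-\varepsilon<2$, the cross term $|s|^{p-\varepsilon}$ \emph{dominates} both $s^2$ and $|s|^{2p-2\varepsilon}$ as $s\to 0$; hence $g(s)=1-c|s|^{p-\varepsilon}(1+o(1))$ with $c=\frac{1}{(p-1-\varepsilon)N C_\varepsilon(i)}\sum_{l=1}^{i}\beta_{l,i+1}\int|w_l|^{p-\varepsilon}\varphi^{p-\varepsilon}>0$ (using $\beta_{l,i+1}>0$). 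Substituting into $I_\varepsilon=\frac1N\|\nabla(\cdot)\|^2$ evaluated along the constraint and expanding, the $|s|^{p-\varepsilon}$ terms combine with a strictly negative coefficient $-(\tfrac12-\tfrac1N)\sum_{l=1}^{i}2\beta_{l,i+1}\int|w_l|^{p-\varepsilon}\varphi^{p-\varepsilon}$, exactly mirroring the final chain of inequalities in Lemma \ref{existence algebric}. Therefore
\begin{equation*}
	A_\varepsilon\le I_\varepsilon\big(g(s)(w_1,\dots,w_i,s\varphi,0,\dots,0)\big)=C_\varepsilon(i)-\Big(\tfrac12-\tfrac1N\Big)\Big(\sum_{l=1}^{i}2\beta_{l,i+1}\int|w_l|^{p-\varepsilon}\varphi^{p-\varepsilon}\Big)|s|^{p-\varepsilon}+o(|s|^{p-\varepsilon})<C_\varepsilon(i)
\end{equation*}
for $|s|>0$ small, and taking the minimum over $i=1,\dots,k-1$ finishes the proof.

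The main obstacle I anticipate is not the asymptotics themselves but the careful bookkeeping that makes the induction legitimate: one must make sure that every $i$-component subconfiguration entering the definition of $C_\varepsilon(i)$ is attained by an honest positive least energy solution, which is precisely where Theorem \ref{theorem=radilly in limit ball} for fewer than $k$ components is invoked, and that the extra index $i+1$ used above can always be chosen so that $\beta_{l,i+1}>0$ for the relevant $l$ — guaranteed here since all $\beta_{ij}>0$, but it is the point that genuinely uses cooperativity. A secondary technical point is verifying that $\int|w_l|^{p-\varepsilon}\varphi^{p-\varepsilon}>0$, i.e. that $\varphi$ can be chosen with support meeting $\{w_l>0\}$; since $w_l>0$ on all of $B(0,1)$ this is automatic. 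The differentiability of $s\mapsto g(s)$ at $0$ in the regime $p-\varepsilon<2$ needs the same one-sided Taylor expansion already used for $f(s)$ in Lemma \ref{existence algebric}, so no new analytic input is required.
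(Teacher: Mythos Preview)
Your approach is essentially the same as the paper's: pick a positive least energy solution of an $i$-component subsystem (available by the induction hypothesis via Theorem~\ref{theorem=radilly in limit ball}), switch on one extra component $s\varphi$, project back onto $\NR'_\varepsilon$ via a scalar $g(s)$, and expand to see the cross terms $|s|^{p-\varepsilon}$ dominate with a strictly negative coefficient because $\beta_{l,i+1}>0$. The only slip is numerical: on $\NR'_\varepsilon$ one has $I_\varepsilon(\vu)=\frac{1}{p'}\int\sum|\nabla u_i|^2$ with $p'=\frac{2p-2\varepsilon}{p-1-\varepsilon}$, not $\frac{1}{N}$, so your constants $NC_\varepsilon(i)$ and $(\tfrac12-\tfrac1N)$ should read $p'C_\varepsilon(i)$ and $(\tfrac12-\tfrac{1}{p'})$ respectively (these only coincide at $\varepsilon=0$); the sign is unaffected, so the argument goes through unchanged after this correction.
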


\vskip 0.1in

\begin{proof}

Fix any $0<\varepsilon<p-1$. By the idea of induction, we only need to prove $$A_{\varepsilon}:=C_\varepsilon(k)<C_\varepsilon(k-1).$$
Recall that $2<2p-2\varepsilon<2^*$, we may let $(u_1,u_2,\dots,u_{k-1})$ be a least energy solution of the $(k-1)$-coupled system \eqref{problem limit ball} with energy
 $$c_{k-1}:=\inf\limits_{(u_1,u_2,\dots,u_{k-1},0)\in\NR'_{\varepsilon} }I_{\varepsilon}(u_1,u_2,\dots,u_{k-1},0).$$
We note that for any $s\in\R $, there exists a unique $t(s)>0$ such that
$$ (t(s)u_1, t(s)u_2,\dots,t(s)u_{k-1},t(s)s\phi ) \in\NR'_\varepsilon.$$
where $\phi\in H^1_0(\om)\backslash \{0\} $. In fact,
\begin{equation*}
	\begin{aligned}
	    t(s)^{2p-2\varepsilon-2} & \bigg( \int\limits_{B(0,1) }(\sum_{i=1}^{k-1}\mu_i|u_i|^{2p-2\varepsilon}+\sum_{i,j=1,i< j}^{k-1} 2\beta_{ij} |u_{i}|^{p-\varepsilon}|u_{j}|^{p-\varepsilon}\\
	    + & \mu_k |s\phi|^{2p-2\varepsilon} +\sum_{i=1}^{k-1} 2\beta_{ik}|u_{i}|^{p-\varepsilon}|s\phi|^{p-\varepsilon}) \bigg) \\
	    = & \int\limits_{B(0,1)}(\sum_{i=1}^{k-1}|\nabla u_i|^2+s^2|\nabla \phi|^2),
	\end{aligned}
\end{equation*}
Note that $t(0)=1$. Recall that $0<p-\varepsilon<2 $, by direct computations we have
\begin{equation*}
	\lim_{s\to0}\frac{t'(s)}{|s|^{p-2-\varepsilon}s}
	=-\frac{p-\varepsilon}{(p-\varepsilon-1)p'c_{k-1}}\int\limits_{B(0,1)}\sum_{i=1}^{k-1} \beta_{ik} |u_{i}|^{p-\varepsilon}|\phi|^{p-\varepsilon},
\end{equation*}
where $p'=\dfrac{2p-2\varepsilon}{p-1-\varepsilon}$. That is,
\begin{equation*}
	t'(s)=-\frac{p-\varepsilon}{(p-\varepsilon-1)p'c_{k-1}}
	\bigg(\int\limits_{B(0,1)}\sum_{i=1}^{k-1} \beta_{ik} |u_{i}|^{p-\varepsilon}|\phi|^{p-\varepsilon}\bigg)~
	|s|^{p-\varepsilon-2}s~(1+o(1)), 
\end{equation*}
$ \hbox{as}\;~s\to 0$ and so
\begin{equation*}
	t(s)=1-\frac{1}{(p-\varepsilon-1)p'c_{k-1}}
	\bigg(\int\limits_{B(0,1)}\sum_{i=1}^{k-1} \beta_{ik} |u_{i}|^{p-\varepsilon}|\phi|^{p-\varepsilon}\bigg)~
	|s|^{p-\varepsilon}~(1+o(1)),\quad \hbox{as}\;~s\to 0.
\end{equation*}
This implies that
\begin{equation*}
	\begin{aligned}
	    t(s)^{2p-2\varepsilon}
		& = 1-\frac{2p-2\varepsilon}{(p-\varepsilon-1)p'c_{k-1}}
		\bigg(\int\limits_{B(0,1)}\sum_{i=1}^{k-1} \beta_{ik} |u_{i}|^{p-\varepsilon}|\phi|^{p-\varepsilon}\bigg)~
		|s|^{p-\varepsilon}~(1+o(1)),\\
		& = 1-\frac{1}{c_{k-1}}
		\bigg(\int\limits_{B(0,1)}\sum_{i=1}^{k-1} \beta_{ik} |u_{i}|^{p-\varepsilon}|\phi|^{p-\varepsilon}\bigg)~
		|s|^{p-\varepsilon}~(1+o(1)) \quad \hbox{as}\;~s\to 0,
	\end{aligned}
\end{equation*}
then we see that
\begin{equation*}
	\begin{aligned}
	    A_\varepsilon
	    & \le I_\varepsilon(t(s)u_1,\dots,t(s)u_{k-1},t(s)s\phi )\\
	    & = \frac{1}{p'}t(s)^{2p-2\varepsilon} \bigg(p'c_{k-1}
	    + \int\limits_{B(0,1)} (\mu_k |s\phi|^{2p-2\varepsilon} +\sum_{i=1}^{k-1} 2\beta_{ik}|u_{i}|^{p-\varepsilon}|s\phi|^{p-\varepsilon}\bigg)\\
	    & = c_{k-1}-(\frac{1}{2}-\frac{1}{p'})|s|^{p-\varepsilon}\int\limits_{B(0,1)} \sum_{i=1}^{k-1} 2\beta_{ik}|u_{i}|^{p-\varepsilon}|\phi|^{p-\varepsilon}+o(|s|^{p-\varepsilon})\\
	    & <c_{k-1}\quad \hbox{as}~|s|>0 ~\hbox{small enough}.
	\end{aligned}
\end{equation*}
Then by similar arguments, we have
\begin{equation*}
	A_{\varepsilon}<C_\varepsilon(k-1).
\end{equation*}
By the idea of induction, we have $C_\varepsilon(k-1)<C_\varepsilon(k-2)<\dots<C_\varepsilon(1)$. 
This completes the proof.
\end{proof}


Denote
\begin{equation*}
	\begin{aligned}
		C(i):&=\min\{\inf\limits_{\bu_i\in\NR'}I(\bu_i)\}\\
		&=\min\bigg\{\inf\limits_{(u_1,\dots,u_i,0,\dots,0) \in\NR'}I(u_1,\dots,u_i,0,\dots,0),\dots,\\
		&\inf\limits_{(0,\dots,0,u_1,\dots,u_i) \in\NR'}I(0,\dots,0,u_1,\dots,u_i)    \bigg\} .
	\end{aligned}
\end{equation*}
Similarly as Lemma \ref{main lemma 1}, we have
	\begin{equation}\label{A'<k-1}
		\begin{aligned}
	    	A'&<\min \{C(1),C(2),\dots,C(k-1)\}\\
	    	& <C(1)\\
	    	& =\min\{I(\omega_{\mu_1},0,\dots,0),\dots,I(0,\dots,0,\omega_{\mu_k}) \}\\
	    	& =\min\{\frac{1}{N}\mu_1^{-\frac{N-2}{2}}S^{N/2},\dots,\frac{1}{N}\mu_k^{-\frac{N-2}{2}}S^{N/2}  \}.
		\end{aligned}	
	\end{equation}

\vskip 0.2in


\begin{theorem}\label{theorem=radilly in limit ball}
	For any $0<\varepsilon<p-1$,\eqref{problem limit ball}has a classical least energy solution $\vu_{\varepsilon}=(u^\varepsilon_1, u^\varepsilon_2, \dots, u^\varepsilon_k)$, and $u^\varepsilon_1, u^\varepsilon_2, \dots, u^\varepsilon_k\in C^2(B(0,1))$ are all positive radially symmetric decreasing.
\end{theorem}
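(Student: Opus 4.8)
The plan is to obtain $\vu_\varepsilon$ via a constrained minimization on the Nehari-type set $\NR'_\varepsilon$ and then bootstrap regularity and symmetry. First I would show that the infimum $A_\varepsilon$ in \eqref{A epsilon} is attained. Since we are on the bounded domain $B(0,1)$ and the exponent $2p-2\varepsilon$ satisfies $2<2p-2\varepsilon<2^*$, the embedding $H^1_0(B(0,1))\hookrightarrow L^{2p-2\varepsilon}(B(0,1))$ is compact. A minimizing sequence $\{\vu^n\}\subset\NR'_\varepsilon$ is bounded in $H(0,1)$ (using the constraint $H_\varepsilon(\vu^n)=0$ together with $I_\varepsilon(\vu^n)=(\tfrac12-\tfrac1{2p-2\varepsilon})\|\nabla\vu^n\|_2^2\to A_\varepsilon$), so up to a subsequence $\vu^n\rightharpoonup\vu_\varepsilon$ weakly in $H(0,1)$ and strongly in $(L^{2p-2\varepsilon}(B(0,1)))^k$. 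Hence the nonlinear part of $H_\varepsilon$ passes to the limit, and one checks $\vu_\varepsilon\ne\vec{\textbf{0}}$: if some component were to vanish in $L^{2p-2\varepsilon}$ this would not immediately contradict $\vu_\varepsilon\ne 0$, but the crucial point is that the total nonlinear term does not vanish — otherwise $\|\nabla\vu^n\|_2\to 0$ forces $A_\varepsilon=0$, contradicting $A_\varepsilon>0$ (which follows from the subcritical Sobolev inequality on $B(0,1)$, exactly as $A'>0$). So there is $t_0>0$ with $t_0\vu_\varepsilon\in\NR'_\varepsilon$, and by weak lower semicontinuity of the gradient norm together with strong $L^{2p-2\varepsilon}$ convergence one gets $t_0\le 1$ and $I_\varepsilon(t_0\vu_\varepsilon)\le\liminf I_\varepsilon(\vu^n)=A_\varepsilon$, forcing $t_0=1$ and $\vu_\varepsilon$ a minimizer.

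Next I would upgrade the minimizer to a genuine solution and arrange positivity. Replacing each component $u_i^\varepsilon$ by $|u_i^\varepsilon|$ does not change membership in $\NR'_\varepsilon$ nor the value of $I_\varepsilon$ (all nonlinear terms and the gradient term are even/unchanged under $u_i\mapsto|u_i|$), so we may assume $u_i^\varepsilon\ge 0$. Then by a standard Lagrange-multiplier argument on the $C^1$ constraint $H_\varepsilon=0$ — noting $H_\varepsilon'(\vu_\varepsilon)\vu_\varepsilon=(2-2p+2\varepsilon)\|\nabla\vu_\varepsilon\|_2^2\ne 0$, so $0$ is a regular value — the minimizer is a weak solution of \eqref{problem limit ball}. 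Here the key role of Lemma \ref{main lemma 1} enters: because $A_\varepsilon<\min\{C_\varepsilon(1),\dots,C_\varepsilon(k-1)\}$, the minimizer cannot have any vanishing component (a solution with, say, the last component zero would be a solution of the $(k-1)$-coupled problem, hence have energy $\ge C_\varepsilon(k-1)>A_\varepsilon$), so every $u_i^\varepsilon\not\equiv 0$; combined with $u_i^\varepsilon\ge0$, each equation $-\Delta u_i^\varepsilon = (\text{nonneg. terms, not}\equiv 0)$ and the strong maximum principle give $u_i^\varepsilon>0$ in $B(0,1)$. Classical regularity $u_i^\varepsilon\in C^2(B(0,1))$ follows by elliptic bootstrap: the right-hand sides lie in $L^q_{\loc}$ for all $q$ by Brezis–Kato / Moser iteration (the subcritical growth makes this routine), then Schauder estimates.

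Finally, for the radial symmetry and monotonicity I would apply the moving-plane method for cooperative elliptic systems on the ball. Since $B(0,1)$ is a ball, $u_i^\varepsilon=0$ on $\partial B(0,1)$, each $u_i^\varepsilon>0$, and the coupling is cooperative ($\beta_{ij}>0$, and each nonlinearity $f_i(\vu)=\mu_i u_i^{2p-1-2\varepsilon}+\sum_{j\ne i}\beta_{ij}u_i^{p-1-\varepsilon}u_j^{p-\varepsilon}$ is $C^1$ with $\partial_{u_j}f_i\ge0$ for $i\ne j$), the Gidas–Ni–Nirenberg moving-plane argument in its system version (as in Troy, or Busca–Sirakov) applies and yields that each $u_i^\varepsilon$ is radially symmetric and strictly radially decreasing. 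I expect the main obstacle to be the verification that the minimizer is nonsemitrivial — i.e. that no component vanishes — which is exactly where Lemma \ref{main lemma 1} (proved by the induction hypothesis and the test-function computation above) is indispensable; the rest is a combination of standard compact-minimization, maximum-principle, elliptic-regularity, and moving-plane arguments. A secondary technical point is confirming that the cooperative system satisfies the structural hypotheses of the moving-plane theorem near the boundary (where some $u_j^\varepsilon$ are small and the powers $p-1-\varepsilon$ may be less than $1$); this is handled since $f_i$ is still Lipschitz in $\vu$ on compact subsets away from the other components' zero sets, and one works on strips where positivity is already known, or invokes the version of the moving-plane method that only requires the nonlinearity to be locally Lipschitz and the system cooperative.
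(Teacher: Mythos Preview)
Your argument is correct and parallels the paper's proof in all steps except the one establishing radial symmetry and monotonicity. The paper does \emph{not} invoke the moving-plane method; instead it builds the symmetry into the minimizing sequence from the start via Schwarz symmetrization. Concretely, for any nonnegative $(u_1,\dots,u_k)\in\NR'_\varepsilon$ one has, using $\beta_{ij}>0$ and the rearrangement inequalities,
\[
\int\sum_i|\nabla u_i^*|^2\le\int\sum_i|\nabla u_i|^2,\qquad
\int|u_i^*|^{p-\varepsilon}|u_j^*|^{p-\varepsilon}\ge\int|u_i|^{p-\varepsilon}|u_j|^{p-\varepsilon},
\]
so there is $t^*\le 1$ with $t^*(u_1^*,\dots,u_k^*)\in\NR'_\varepsilon$ and $I_\varepsilon(t^*u_1^*,\dots,t^*u_k^*)\le I_\varepsilon(u_1,\dots,u_k)$. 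Hence one may take the minimizing sequence already radially symmetric nonincreasing; the weak limit inherits this, and strict positivity plus $C^2$ regularity then yield strict radial decrease.

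What each approach buys: the paper's symmetrization route is shorter and sidesteps exactly the technical point you flag at the end --- the exponents $p-1-\varepsilon<1$ make the nonlinearities non-Lipschitz at $u_j=0$, so the system version of Gidas--Ni--Nirenberg (Troy, Busca--Sirakov) requires some care near $\partial B(0,1)$. Your route is perfectly viable, but the symmetrization argument is the cleaner choice here since the domain is a ball and the functional is rearrangement-monotone; it also makes the ``radially decreasing'' conclusion immediate rather than a consequence of a further PDE argument.
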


\begin{proof}

	Fix any $0<\varepsilon<p-1 ,$ it is easy to see that $ A_{\varepsilon}>0$.
	For $ (u_1, u_2, \dots,u_k ) \in \NR'_{\varepsilon} $ with $ u_1, u_2, \dots,u_k \ge 0  $, we donote by $ (u_1^*, u_2^*, \dots,u_k^* ) $ as its Schwarz symmetrization. Then by the properties of Schwarz symmetrization and $\beta_{ij}>0$, we have

	\begin{equation}
		\begin{aligned}
		    \int\limits_{B(0,1) }\sum_{i=1}^{k}|\nabla u_i^*|^2~\mathrm{d}x
		    \le & \int\limits_{B(0,1) }\sum_{i=1}^{k}|\nabla u_i|^2~\mathrm{d}x\\
			=& \int\limits_{B(0,1) }(\sum_{i=1}^{k}\mu_i|u_i|^{2p-2\varepsilon}+\sum_{i,j=1,i< j}^{k} 2\beta_{ij} |u_{i}|^{p-\varepsilon}|u_{j}|^{p-\varepsilon} )~\mathrm{d}x\\
			\le & \int\limits_{B(0,1) }(\sum_{i=1}^{k}\mu_i|u_i^*|^{2p-2\varepsilon}+\sum_{i,j=1,i< j}^{k} 2\beta_{ij} |u_{i}^*|^{p-\varepsilon }|u_{j}^*|^{p-\varepsilon} )~\mathrm{d}x.
		\end{aligned}
	\end{equation}
Recall \eqref{set N epsilon} , there exists $t^*>0 $ such that $ (t^*u_1^*, t^*u_2^*, \dots,t^*u_k^* )\in \NR'_{\varepsilon} $ with
	\begin{equation*}
		(t^*)^{2p-2\varepsilon-2}=\dfrac{\int_{B(0,1) }\sum_{i=1}^{k}|\nabla u_i^*|^2~\mathrm{d}x}{\int_{B(0,1) }(\sum_{i=1}^{k}\mu_i|u_i^*|^{2p-2\varepsilon}+\sum_{i,j=1,i< j}^{k} 2\beta_{ij} |u_{i}^*|^{p-\varepsilon}|u_{j}^*|^{p-\varepsilon} )~\mathrm{d}x }\le 1,
	\end{equation*}
	and then
	\begin{equation}\label{Iepisilon=radial }
		\begin{aligned}
			I_{\varepsilon}(t^*u_1^*, t^*u_2^*, \dots,t^*u_k^* )
			& = (\frac{1}{2}-\frac{1}{2p-2\varepsilon} )(t^*)^2 \int\limits_{B(0,1) }\sum_{i=1}^{k}|\nabla u_i^*|^2~\mathrm{d}x\\
		    & \le (\frac{1}{2}-\frac{1}{2p-2\varepsilon} ) \int\limits_{B(0,1) }\sum_{i=1}^{k}|\nabla u_i|^2~\mathrm{d}x\\
		    & = I_{\varepsilon}(u_1, u_2, \dots, u_k).
		\end{aligned}
	\end{equation} 
	Therefore, we may take a minimizing sequence $\vu_n= (u_{1n}, u_{2n}, \dots, u_{kn})\in \NR'_{\varepsilon} $ such that $ (u_{1n}, u_{2n}, \dots, u_{kn}) =(u_{1n}^*, u_{2n}^*, \dots, u_{kn}^*) $ and $I_{\varepsilon}(\vu_n)\to A_{\varepsilon}  $. We see from \eqref{Iepisilon=radial }
	 $ u_{1n}, u_{2n}, \dots, u_{kn} $ are uniformly bounded in $H_{0}^1(B(0,1)) $. Passing to a subsequence, we may assume that $ u_{in}\rightharpoonup u_i^{\varepsilon}  $  weakly in  $H_{0}^1(B(0,1)) $. 
	By the compactness of the embedding $H_{0}^1(B(0,1))\hookrightarrow L^{2p-2\varepsilon}(B(0,1))  $  we have 
\begin{equation*}
		\begin{aligned}
			\int\limits_{B(0,1) } & (\sum_{i=1}^{k}\mu_i|u_i^{\varepsilon}|^{2p-2\varepsilon}+\sum_{i,j=1,i< j}^{k} 2\beta_{ij} |u_i^{\varepsilon}|^{p-\varepsilon}|u_j^{\varepsilon}|^{p-\varepsilon} )~\mathrm{d}x\\
			& =\lim_{n\to\iy}\int\limits_{B(0,1) }  (\sum_{i=1}^{k}\mu_i|u_{in} |^{2p-2\varepsilon}+\sum_{i,j=1,i< j}^{k} 2\beta_{ij} |u_{in}|^{p-\varepsilon}|u_{jn}|^{p-\varepsilon} )~\mathrm{d}x\\
			& = \dfrac{2p-2\varepsilon}{p-1-\varepsilon}\lim_{n\to\iy} I_{\varepsilon}(\vu_n)
			= \dfrac{2p-2\varepsilon}{p-1-\varepsilon} A_{\varepsilon}>0,
		\end{aligned}
	\end{equation*}
	which implies  $(u_1^{\varepsilon}, u_2^{\varepsilon},\dots, u_k^{\varepsilon} )\ne (0,0,\dots,0) $. Moreover, $u_1^{\varepsilon}, u_2^{\varepsilon},\dots, u_k^{\varepsilon}\ge 0 $ are radially symmetric.
	Meanwhile,
		\begin{equation*}
	 		\int\limits_{B(0,1) }\sum_{i=1}^{k}|\nabla u_i^{\varepsilon}|^2~\mathrm{d}x
		    \le \lim_{n\to\iy} \int\limits_{B(0,1) }\sum_{i=1}^{k}|\nabla u_{in}|^2~\mathrm{d}x,
	 	\end{equation*} 
	then
	\begin{equation*}
		\int\limits_{B(0,1) }\sum_{i=1}^{k}|\nabla u_i^{\varepsilon}|^2~\mathrm{d}x
		\le \int\limits_{B(0,1) }(\sum_{i=1}^{k}\mu_i|u_i^{\varepsilon}|^{2p-2\varepsilon}+\sum_{i,j=1,i< j}^{k} 2\beta_{ij} |u_i^{\varepsilon}|^{p-\varepsilon}|u_j^{\varepsilon}|^{p-\varepsilon} )~\mathrm{d}x.
	\end{equation*}
	Therefore, there exists $0<t_{\varepsilon}\le 1 $	such that $t_{\varepsilon}\vu_{\varepsilon}= (t_{\varepsilon}u_1^{\varepsilon}, t_{\varepsilon}u_2^{\varepsilon},\dots,t_{\varepsilon}u_k^{\varepsilon} ) \in \NR'_{\varepsilon}  $, and then
	\begin{equation*}
		\begin{aligned}
			A_{\varepsilon} & \le I_{\varepsilon}(t_{\varepsilon}\vu_{\varepsilon} )\\
			& = (\frac{1}{2}-\frac{1}{2p-2\varepsilon})(t_{\varepsilon})^2  \int\limits_{B(0,1) }\sum_{i=1}^{k}|\nabla u_i^{\varepsilon}|^2~\mathrm{d}x\\
			& \le \lim_{n\to\iy } (\frac{1}{2}-\frac{1}{2p-2\varepsilon})\int\limits_{B(0,1) }\sum_{i=1}^{k}|\nabla u_{in}|^2~\mathrm{d}x\\
			& = \lim_{n\to\iy } I_{\varepsilon}(\vu_n)= A_{\varepsilon}.
		\end{aligned}
	\end{equation*}
	Therefore, $t_{\varepsilon}=1 $ and $\vu_{\varepsilon} \in \NR'_{\varepsilon}  $ with $ I(\vu_{\varepsilon})=A_{\varepsilon} $.
	Moreover,
		\begin{equation*}
	 		\int\limits_{B(0,1) }\sum_{i=1}^{k}|\nabla u_i^{\varepsilon}|^2~\mathrm{d}x
		    = \lim_{n\to\iy} \int\limits_{B(0,1) }\sum_{i=1}^{k}|\nabla u_{in}|^2~\mathrm{d}x,
	 	\end{equation*} 
	that is, $u_{in}\to u^{\varepsilon}_i $ strongly in $H^1_0(B(0,1)) $.
	There exists a Lagrange multiplier $\gamma\in \R $ such that
	\begin{equation*}
		I'_{\varepsilon}(\vu_{\varepsilon})-\gamma H'_{\varepsilon}(\vu_{\varepsilon})=0.
	\end{equation*}
	Since $I'_{\varepsilon}(\vu_{\varepsilon})~\vu_{\varepsilon} =H_{\varepsilon}(\vu_{\varepsilon})=0 $ and
	\begin{equation*}
		H'_{\varepsilon}(\vu_{\varepsilon})~\vu_{\varepsilon}
		=(2+2\varepsilon-2p)\int\limits_{B(0,1) }(\sum_{i=1}^{k}\mu_i|u_i^{\varepsilon}|^{2p-2\varepsilon}+\sum_{i,j=1,i< j}^{k} 2\beta_{ij} |u_i^{\varepsilon}|^{p-\varepsilon}|u_j^{\varepsilon}|^{p-\varepsilon} )~\mathrm{d}x<0,
	\end{equation*}
	we get that $\gamma=0 $ and so  $I'_{\varepsilon}(\vu_{\varepsilon})=0.$
	By Lemma \ref{main lemma 1}, we see that $u^{\varepsilon}_i\not\equiv 0 $, otherwise $\vu_{\varepsilon} $ cannot minimize $I_{\varepsilon}(\vu) $. This means that $\vu_{\varepsilon} $ is a least energy solution of \eqref{problem limit ball}. Recall that $u_1^{\varepsilon}, u_2^{\varepsilon},\dots, u_k^{\varepsilon}\ge 0 $ are radially symmetric non-increasing.
	By regularity theory and the maximum principle, we see that $u_1^{\varepsilon}, u_2^{\varepsilon},\dots, u_k^{\varepsilon}> 0 $ 
	in $B(0, 1)$ and $u_1^{\varepsilon}, u_2^{\varepsilon},\dots, u_k^{\varepsilon}\in C^2(B(0,1)) $  are radially symmetric decreasing.

\end{proof}

\vskip 0.2in

Now we start to prove Theorem \ref{main theorem_limit}.\\

{\it Proof of Theorem \ref{main theorem_limit} }
	Recall the definition of $\NR'(R)$, for any $\vu\in \NR'(1)$, there exists $t_{\varepsilon}>0 $ such that $t_{\varepsilon}\vu=(t_{\varepsilon}u_1, t_{\varepsilon}u_2,\dots, t_{\varepsilon}u_k) \in \NR'_{\varepsilon}$~ with $t_{\varepsilon}\to 1 $ as $\varepsilon\to 0$. Then we have
\begin{equation*}
	\limsup_{\varepsilon\to0} A_\varepsilon
	\le \limsup_{\varepsilon\to0} I_{\varepsilon}(t_{\varepsilon}\vu)=I(\vu),\quad \forall~\vu\in\NR'(1).
\end{equation*}
By Lemma \ref{A'(R) are equivalent.} we have 
\begin{equation}\label{Ae < A'}
	\limsup_{\varepsilon\to0} A_\varepsilon \le A'(1) =A'. 
\end{equation}
By Theorem \ref{theorem=radilly in limit ball}, let $ \vu_{\varepsilon}= (u^\varepsilon_1, u^\varepsilon_2, \dots, u^\varepsilon_k) $ be a positive least energy solution of \eqref{problem limit ball}, which is radially symmetric decreasing.

Fix $0<\varepsilon\le \frac{p-1}{2}$. Then by $I'_{\varepsilon}(\vu_\varepsilon)~\vu_\varepsilon=0 $ and Sobolev inequality, we see that
\begin{equation}\label{uiepsilon uniformly bouned}
	\dfrac{2p-2\varepsilon}{p-\varepsilon-1} A_\varepsilon
	=\int\limits_{B(0,1) }\sum_{i=1}^{k}|\nabla u_i^{\varepsilon}|^2~\mathrm{d}x \ge C_0,	\quad \forall ~0<\varepsilon\le \frac{p-1}{2},
\end{equation}
where $C_0 $ is a positive constant independent of $\varepsilon$. Then 
$u^\varepsilon_1, u^\varepsilon_2, \dots, u^\varepsilon_k $ are uniformly bouned in $H_0^1(B(0,1)) $.
Passing to a subsequence,we assume that $u^\varepsilon_i\rightharpoonup u^0_i $ weakly in $H_0^1(B(0,1)) $ for $i=1,2,\dots,k. $
Then $ \vu_0= (u^0_1,u^0_2,\dots,u^0_k) $ is a solution of 
	\begin{equation}\label{problem ball R=1}
		\left\{
		\begin{aligned}
			-\Delta u_i&=\mu_i |u_i|^{2p-2}u_i+\sum_{j=1,j\ne i}^{k} \beta_{ij} |u_{i}|^{p-2}u_i|u_{j}|^{p}, \quad x\in B(0,1), \\
			u_i&\in H^1_0(B(0,1)), \quad i=1,2,\cdots, k.
		\end{aligned}
		\right.
	\end{equation}
Assume by contradiction that $\sum\limits_{i=1}^{k} \Vert u^\varepsilon_i \Vert_{\iy} $ is uniformly bounded, then by the Dominated Convergence Theorem, we get that
\begin{equation*}
	\begin{aligned}
	    \lim_{\varepsilon\to 0}\int\limits_{B(0,1) }|u_i^{\varepsilon}|^{2p-2\varepsilon}\mathrm{d}x 
	    &= \int\limits_{B(0,1) }|u_i^{0}|^{2p}\mathrm{d}x,\\
	    \lim_{\varepsilon\to 0}\int\limits_{B(0,1) }|u_i^{\varepsilon}|^{p-\varepsilon}|u_j^{\varepsilon}|^{p-\varepsilon} \mathrm{d}x 
	    &= \int\limits_{B(0,1) }|u_i^{0}|^{p}|u_j^{0}|^{p}\mathrm{d}x.
	\end{aligned}
\end{equation*}
Combining these with $I'_{\varepsilon}(\vu_\varepsilon)=I'(\vu_0)=0 $, we have that $u^{\varepsilon}_i\to u^0_i $ strongly in $H^1_0(B(0,1)) $ for $i=1,2,\dots,k. $

Then by \eqref{uiepsilon uniformly bouned}, we see that $\vu_0\ne \vec{\textbf{0} } $. Moreover $u^0_1, u^0_2,\dots, u^0_k \ge 0$. We may assume $u^0_1\not\equiv 0 $.
By the strong maximum principle, $u^0_1>0 $ in $B(0, 1)$. Note that $2p = 2^* $. Combining these with the Pohozaev identity \eqref{pohozaev}, we have
	\begin{equation*}
		    0<\int\limits_{\partial B(0,1) }\sum_{i=1}^{k} |\nabla u_i|^2 (x\cdot \vec{ \textbf{n}})~\mathrm{d}S=0,
	\end{equation*}
a contradiction. Here, $\vec{ \textbf{n}} $ denotes the outward unit normal vector on $\partial B(0,1)$.Therefore, we deduce that $\sum\limits_{i=1}^{k} \Vert u^\varepsilon_i \Vert_{\iy} \to +\iy $ as $\varepsilon\to 0 $.

Next we will use a blow up analysis. 

Note that  $u^\varepsilon_1, u^\varepsilon_2, \dots, u^\varepsilon_k $ are radially symmetric decreasing, then $u^\varepsilon_i(0)=\max\limits_{B(0,1)}u^\varepsilon_i(x) $.
We define $K_\varepsilon:=\max\limits_i\{u^\varepsilon_i(0)\} $, then $K_\varepsilon\to +\iy $ as $\varepsilon\to 0 $.
Define
\begin{equation*}
	U^\varepsilon_i(x) = K_\varepsilon^{-1}u^\varepsilon_i( K_\varepsilon^{-\alpha_\varepsilon} x),
\end{equation*}
where $\alpha_\varepsilon=p-1-\varepsilon $.
Then 
\begin{equation}\label{max=1 }
	1=\max\limits_i \{  U^\varepsilon_i(0) \}
	=\max\limits_i\bigg \{\max\limits_{x\in B(0,K_\varepsilon^{\alpha_\varepsilon})}U^\varepsilon_i(x)  \bigg\},
\end{equation}
and $U^\varepsilon_1, U^\varepsilon_2, \dots, U^\varepsilon_k $ satisfy the following system:
\begin{equation}
	-\Delta U^{\varepsilon}_i
	=\mu_i (U^\varepsilon_i)^{2p-2\varepsilon-1}+\sum_{j=1,j\ne i}^{k} \beta_{ij} (U^\varepsilon_i)^{p-1-\varepsilon}(U^\varepsilon_j)^{p-\varepsilon}, \quad x\in B(0,K_\varepsilon^{ \alpha_\varepsilon} ). 
\end{equation}
Note that $0<\varepsilon\le \frac{p-1}{2}$, then
\begin{equation*}
	\int\limits_{\RN} |\nabla U^{\varepsilon}_i|^2~\mathrm{d}x
	=K_\varepsilon^{-(N-2)\varepsilon}~\int\limits_{\RN} |\nabla u^{\varepsilon}_i|^2~\mathrm{d}x
	\le \int\limits_{\RN} |\nabla u^{\varepsilon}_i|^2~\mathrm{d}x,
\end{equation*}
then $\{(U^\varepsilon_1, U^\varepsilon_2, \dots, U^\varepsilon_k ) \}_{n\ge1} $ is bounded in $ D$.
By elliptic estimates, for a subsequence we have $(U^\varepsilon_1, U^\varepsilon_2, \dots, U^\varepsilon_k )\to (U_1, U_2, \dots, U_k ) \in D$ 
uniformly in every compact subset of $ \RN$ as $\varepsilon\to0 $, and $ (U_1, U_2, \dots, U_k )$ satisfies \eqref{RN system}.Then we see that $I'(U_1, U_2, \dots, U_k )=0 $. Moreover, $U_1, U_2, \dots, U_k \ge 0 $ are radially symmetric decreasing.

By \eqref{max=1 } we have $(U_1, U_2, \dots, U_k )\ne (0,\dots, 0) $, and so $(U_1, U_2, \dots, U_k )\in \NR' $.
Then we deduce from \eqref{Ae < A'} that
\begin{equation*}
	\begin{aligned}
		A' & \le I(U_1, U_2,\dots, U_k)
		=(\frac{1}{2}-\frac{1}{2p} ) \int\limits_{\RN}\sum_{i=1}^{k} |\nabla U_i|^2~\mathrm{d}x\\
		& \le \liminf\limits_{\varepsilon\to0}~ (\frac{1}{2}-\frac{1}{2p-2\varepsilon } ) \int\limits_{B(0,K_\varepsilon^{\alpha_\varepsilon}) }\sum_{i=1}^{k} |\nabla U_i^\varepsilon|^2~\mathrm{d}x\\
		& \le \liminf\limits_{\varepsilon\to0}~ (\frac{1}{2}-\frac{1}{2p-2\varepsilon } ) \int\limits_{B(0,1) }\sum_{i=1}^{k} |\nabla u_i^\varepsilon|^2~\mathrm{d}x\\
		& =\liminf\limits_{\varepsilon\to0} A_\varepsilon \le A'.
	\end{aligned}
\end{equation*}
This implies that $I(U_1, U_2, \dots, U_k)=A'$. By \eqref{A'<k-1} we have that $ U_i\not\equiv 0 $ for all $i=1,2,\dots,k $.
By strong maximum principle, $U_1, U_2, \dots, U_k > 0$ are radially symmetric decreasing.
Notice that $(U_1, U_2, \dots, U_k)\in\NR $ and so $I(U_1, U_2, \dots, U_k)\ge A\ge A' $, that is,
\begin{equation}\label{A=A'}
	I(U_1, U_2, \dots, U_k)=A'=A,
\end{equation}
and $(U_1, U_2, \dots, U_k) $ is a positive least energy solution of \eqref{RN system}, which is radially symmetric decreasing.

\vskip 0.2in

Now we shall prove that the positive least energy solution of \eqref{RN system} must be the least energy synchronized type solution of the form
$(t_1\omega,t_2\omega,\dots,t_k\omega)$. Recall the minimizing problem
	\begin{equation*}
		A':=\inf\limits_{\vu\in \NR'}I(\vu),
	\end{equation*}
where
	\begin{equation*}
		\NR'=\{\vu \in D \backslash \{ \vec{\textbf{0} }\}	~:~ I'(\vu)~\vu=0\},
	\end{equation*}
and the minimizing problem introducd in the proof of Lemma \ref{existence algebric}:
	\begin{equation*}
		d_k:=\inf\limits_{\vt\in\PR_k }\GR_k(\vt),
	\end{equation*}
	where
	\begin{equation*}
		\GR_k(\vt)=\sum\limits_{i=1}^k(\frac{t_i^2}{2}-\frac{\mu_i|t_i|^{2p}}{2p})- \frac{1}{2p}\big(\sum\limits_{i=1,j>i}^{k}2\beta_{ij}|t_i|^{p}|t_j|^{p} \big),
	\end{equation*}
	\begin{equation*}
		\PR_k= \{\vt\in\R^k \backslash\{\vec{\textbf{0} } \} ~|~P_k(\vt):=\sum\limits_{i=1}^k(t_i^2-\mu_i|t_i|^{2p})-  \sum\limits_{i=1,j>i}^{k}2\beta_{ij}|t_i|^{p}|t_j|^{p}=0   \}.
	\end{equation*}
By standard argument (cf. \cite{peng2016elliptic}), we see that
\begin{equation*}
	d_k=\inf_{\vt\in\R^k\backslash\{\vec{\textbf{0}}\}}
	\dfrac{ \big(\sum_{i=1}^k t_i^2\big)^{\frac{N}{2}}}{N\big(\sum\limits_{i=1}^k\mu_i|t_i|^{2p}-  \sum\limits_{i=1,j>i}^{k}2\beta_{ij}|t_i|^{p}|t_j|^{p} \big)^{\frac{N-2}{2} } },
\end{equation*}
and by \eqref{A=A'} we have
\begin{equation}\label{minimizing A'-1}
	A=A'=\inf_{\vu\in D\backslash\{\vec{\textbf{0}}\}}
	\dfrac{ \big(\sum_{i=1}^{k}\Vert \nabla u_i\Vert_2^2~ \big)^{\frac{N}{2}}}
	{N\big(\sum_{i=1}^{k}\mu_i\Vert u_i\Vert^{2p}_{2p}+\sum_{i,j=1,i< j}^{k} 2\beta_{ij} \Vert u_{i}u_{j}\Vert^{p}_{p} \big)^{\frac{N-2}{2} } },
\end{equation}
then by Sobolev inequality \eqref{sobolev embedding}, we have
\begin{equation*}
	A'\ge \inf_{\vu\in D\backslash\{\vec{\textbf{0}}\}}
	\dfrac{ \big(\sum_{i=1}^{k}\Vert u_i\Vert_{2p}^2~ \big)^{\frac{N}{2}}}
	{N\big(\sum_{i=1}^{k}\mu_i\Vert u_i\Vert^{2p}_{2p}+\sum_{i,j=1,i< j}^{k} 2\beta_{ij} \Vert u_{i}\Vert^p_{2p} \Vert u_{j}\Vert^{p}_{2p} \big)^{\frac{N-2}{2} } }S^{\frac{N}{2}},
\end{equation*}
where $\Vert\cdot\Vert_{q}$ denotes the norm on $L^{q}(\RN)$ and $2p=2^*=\frac{2N}{N-2} $.
Recall Lemma \ref{existence algebric}, then we see that 
\begin{equation*}
	A'=d_kS^\frac{N}{2}
\end{equation*}
is attained by
\begin{equation*}
	(t_1U_{\varepsilon,y},t_2U_{\varepsilon,y},\dots,t_kU_{\varepsilon,y}),
\end{equation*}
where $(t_1, t_2, \dots, t_k) $  is the positive solution of \eqref{algebric system} that attains $d_k$, and $ U_{\varepsilon,y}$ is the Aubin-Talenti instanton given by \eqref{Aubin-Talenti}.

In fact, we deduce that
\begin{equation}\label{minimizing A'-2}
	A'= \inf_{\vu\in D\backslash\{\vec{\textbf{0}}\}}
	\dfrac{ \big(\sum_{i=1}^{k}\Vert u_i\Vert_{2p}^2~ \big)^{\frac{N}{2}}}
	{N\big(\sum_{i=1}^{k}\mu_i\Vert u_i\Vert^{2p}_{2p}+\sum_{i,j=1,i< j}^{k} 2\beta_{ij} \Vert u_{i}\Vert^p_{2p} \Vert u_{j}\Vert^{p}_{2p} \big)^{\frac{N-2}{2} } }S^{\frac{N}{2}}.
\end{equation}
Suppose that $A'$ is attained  by some nonzero $\vec{\textbf{v}}=(v_1,v_2,\dots,v_k)$.
Then by H\"older inequality and Sobolev inequality, we see from \eqref{minimizing A'-1} and \eqref{minimizing A'-2} that 
 \begin{equation*}
 	 \Vert\nabla v_i\Vert^2_2=S\Vert v_i\Vert^2_{2p} 
 \end{equation*}
 for all $i=1,2,\dots,k$, which implies $v_i=c_iU_{\varepsilon,y} $ for some $c_i\ne0, \varepsilon>0$ and $y\in\RN$, or $v_i=0$.
 Redenote all $v_i=c_iU_{\varepsilon,y} $ for all $i$ where $c_{i}=0 $ or $c_{i}\ne0$.

By \eqref{minimizing A'-2}, we have that $ (\Vert v_1\Vert_{2p}, \Vert v_2\Vert_{2p}, \dots, \Vert v_k\Vert_{2p} )$ attains $d_k$. 
 It is easy to see that
 $(c_1, c_2,\dots, c_k) $ attains $d_k$.
 Recall Lemma \ref{existence algebric} that $d_k $ is attained by a positive solution if $\beta_{ij}>0$. 
 Then $c_i>0$ for all $i=1,2,\dots,k$.

That is, $A'$ is attained by $\vu$ if and only if
\begin{equation*}
	\vu=(\tilde{t}_1U_{\varepsilon,y},\tilde{t}_2U_{\varepsilon,y},\dots,\tilde{t}_kU_{\varepsilon,y}),
\end{equation*}
where $ (\tilde{t}_1, \tilde{t}_2, \dots, \tilde{t}_k)$ is defined in \eqref{least algebric solution}.
We see that, if $\vec{\textbf{v}} \in \NR$ attains $A$, then $ \vec{\textbf{v}}\in\NR' $ and by \eqref{A=A'} we have
	$$   I(\vec{\textbf{v}})=A=A',  $$ 
which means $\vec{\textbf{v}}$ attains $A'$.
Since $A'$ is attained only by $\vu=(\tilde{t}_1U_{\varepsilon,y},\tilde{t}_2U_{\varepsilon,y},\dots,\tilde{t}_kU_{\varepsilon,y})$, which solves \eqref{RN system}, then $\vu\in \NR$, by
$$   I(\vu)=A=A',  $$ 
we see that $\vu$ also attains $A$.
Hence, the positive least energy solution of \eqref{RN system} must be the least energy synchronized type $(\tilde{t}_1U_{\varepsilon,y},\tilde{t}_2U_{\varepsilon,y},\dots,\tilde{t}_kU_{\varepsilon,y})$, where $(\tilde{t}_1, \tilde{t}_2,\dots,\tilde{t}_k)$ is defined in \eqref{least algebric solution}.

\vskip 0.2in

Finally, we show the existence of $(t_1(\beta),t_2(\beta),\dots,t_k(\beta))$ for $\beta_{ij}=\beta>0 $ small.
Denote functions
	\begin{equation}
		\begin{aligned}
	   	 f_i(t_1,t_2,\dots,t_k,\beta)
	   	 & :=\mu_it_i^{2p-2}\\
	   	 &+\sum\limits_{j=1,j\ne i}^{k}\beta t_i^{p-2}t_j^{p} - 1, \quad t_i>0,~~\hbox{and}~~t_j\ge 0 ~~\hbox{for}~~ j\ne i,
		\end{aligned}		
	\end{equation}
and define $t_i(0)=\mu_i^{-\frac{1}{2p-2} } $, then $ f_i(t_1(0),t_2(0) ,\dots,t_k(0),0)=0$ for all $ i=1,2,\dots,k. $ Note that
\begin{equation*}
	\begin{aligned}
	    \partial_i f_i(t_1(0),t_2(0) ,\dots,t_k(0) ,0)&=(2p-2)\mu_i(t_i(0))^{ 2p-3 }>0,\\
	\partial_j f_i(t_1(0),t_2(0) ,\dots,t_k(0) ,0)&=0,\quad \hbox{if}~~ j\ne i~~\hbox{and}~~j=1,2,\dots,k,
	\end{aligned}
\end{equation*}
which implies that 
	\begin{equation*}
		\det(F_{ij})_{k\times k}:= \det ( \partial_j f_i(t_1(0),t_2(0) ,\dots,t_k(0) ,0))_{k\times k}>0.
	\end{equation*}
Therefore, by the implicit function theorem, $(t_1(\beta),t_2(\beta),\dots,t_k(\beta))$ are well defined and class $C^1 $ on $(-\beta_2,\beta_2) $ for some $\beta_2>0$, and
	\begin{equation*}
		f_i(t_1(\beta), t_2(\beta) ,\dots, t_k(\beta) ,\beta) \equiv 0,\quad i=1,2,\dots,k.  
	\end{equation*}
This implies that $(t_1(\beta), t_2(\beta) ,\dots, t_k(\beta)) $ solves \eqref{algebric system}. Moreover, we notice that
 ~$(t_1(\beta)U_{\varepsilon,y},t_2(\beta)U_{\varepsilon,y},\dots,t_k(\beta)U_{\varepsilon,y}) $ is a positive solution of \eqref{RN system} and

 \begin{equation*}
 	\lim_{\beta\to0}\sum\limits_{i=1}^k t_i^2(\beta) = \sum\limits_{i=1}^k t_i^2(0)= \sum\limits_{i=1}^k \mu_i^{-\frac{1}{p-1} },
 \end{equation*}
that is, there exists $0<\beta_1\le \beta_2 $, such that
\begin{equation*}
 	\sum\limits_{i=1}^k t_i^2(\beta) > \min\limits_i \{\mu_i^{-\frac{1}{p-1} } \} ,\quad \forall \beta\in (0,\beta_1).
\end{equation*}
Combining this with \eqref{A'<k-1}, we have
\begin{equation*}
	I(\vec{\textbf{U} } )=A'=A<I(t_1(\beta)U_{\varepsilon,y},t_2(\beta)U_{\varepsilon,y},\dots,t_k(\beta)U_{\varepsilon,y}),\quad \forall \beta\in (0,\beta_1),
\end{equation*}
that is, $ (t_1(\beta)U_{\varepsilon,y},t_2(\beta)U_{\varepsilon,y},\dots,t_k(\beta)U_{\varepsilon,y}) $ is a different positive solution of \eqref{RN system} with respect to $ \vec{\textbf{U}} $. This completes the proof.
\hfill $\Box $

\vskip 0.2in

Similarly to \cite[Proposition 2.1]{Chen2015}, the following properties still hold in our case.

\begin{proposition}
	Assume that $\mu_i>0$ and $\beta_{ij}>0 $. Let $\vec{\textbf{U}}=(U_1,U_2,\dots,U_k) $ be a positive radially symmetric least energy solution of \eqref{RN system} obtained in Theorem \ref{main theorem_limit}. Then there exists $C>0 $ such that
	\begin{equation*}
		\sum\limits_{i=1}^k U_i(x)\le C(1+|x| )^{2-N},
		\quad \sum\limits_{i=1}^k |\nabla U_i(x)|\le C(1+|x| )^{2-N}.
	\end{equation*}
\end{proposition}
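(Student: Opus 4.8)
The plan is to first note that the classification already established makes the estimate almost immediate: by Theorem~\ref{main theorem_limit}(1), the positive radially symmetric least energy solution $\vec{\textbf{U}}$ must be of the synchronized form $(\tilde t_1U_{\varepsilon,y},\dots,\tilde t_kU_{\varepsilon,y})$ with $U_{\varepsilon,y}$ the Aubin--Talenti instanton \eqref{Aubin-Talenti}. Since
$$
U_{\varepsilon,y}(x)=[N(N-2)]^{\frac{N-2}{4}}\Big(\frac{\varepsilon}{\varepsilon^2+|x-y|^2}\Big)^{\frac{N-2}{2}}\le C(1+|x|)^{2-N}
$$
and differentiating gives $|\nabla U_{\varepsilon,y}(x)|\le C(1+|x|)^{1-N}\le C(1+|x|)^{2-N}$, summing over $i=1,\dots,k$ yields both claimed bounds at once.

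For a self-contained argument that does not invoke the classification (in the spirit of \cite[Proposition~2.1]{Chen2015}), I would argue directly. Set $W:=\sum_{i=1}^{k}U_i\ge 0$. Each $U_i$ satisfies $-\Delta U_i=\mu_iU_i^{2p-1}+\sum_{j\ne i}\beta_{ij}U_i^{p-1}U_j^{p}\ge 0$, and since $2p=2^*$ and $\beta_{ij}>0$, Young's inequality bounds the right-hand side by $CW^{2^*-1}$, so $-\Delta W\le CW^{2^*-1}$ weakly on $\RN$. A Moser iteration (Br\'ezis--Kato type) applied to $W\in D^{1,2}(\RN)$ then gives $W\in L^\infty(\RN)$, and since $W\in L^{2^*}(\RN)$ one obtains $W(x)\to 0$ as $|x|\to\infty$.

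Next I would pass to the Kelvin transform $\hat U_i(x):=|x|^{2-N}U_i(x/|x|^2)$. By conformal invariance of $-\Delta$ under inversion together with the exact homogeneities $2p-1$ and $(p-1)+p$ of the nonlinear terms, $(\hat U_1,\dots,\hat U_k)$ solves the \emph{same} system \eqref{RN system} on $B(0,1)\setminus\{0\}$, and it still belongs to $D$. Since $\{0\}$ has zero capacity, a standard removable-singularity argument extends it to a solution on $B(0,1)$; the $L^\infty$ bound of the previous step, applied near the origin, then gives $\hat U_i\le C$ on $B(0,1/2)$, i.e.\ $U_i(x)\le C|x|^{2-N}$ for $|x|\ge 2$, which together with $U_i\in C^2\cap L^\infty$ on bounded sets is the first estimate. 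For the gradient I would rescale on dyadic annuli: $v_R(y):=R^{N-2}U_i(Ry)$ solves a uniformly elliptic equation on $\{1/2<|y|<2\}$ whose right-hand side is bounded uniformly in $R$ thanks to the decay just obtained, so interior gradient estimates give $|\nabla v_R|\le C$ on $\{|y|=1\}$, that is $|\nabla U_i(x)|\le C|x|^{1-N}$ for $|x|=R$ large --- stronger than needed.

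The genuine work is in (i) the Moser iteration for the coupled system and (ii) verifying removability of the origin for the Kelvin-transformed system; both are rendered routine by working with $W=\sum_iU_i$ and by the sign condition $\beta_{ij}>0$, which keeps every nonlinear term one-signed and of critical growth so that no cancellations occur. As noted, the classification route in the first paragraph sidesteps even this.
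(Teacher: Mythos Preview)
Your proposal is correct on both fronts. The first route---invoking the classification from Theorem~\ref{main theorem_limit}(1) and reading the decay off the explicit Aubin--Talenti profile---is a legitimate shortcut that the paper does \emph{not} take; the paper instead gives a self-contained argument in the style of \cite[Proposition~2.1]{Chen2015}, presumably because it wants the estimate to stand independently of the classification.

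Your second, self-contained route is essentially the paper's argument, with one notable difference in the gradient step. The paper also Kelvin-transforms, invokes a Br{\'e}zis--Kato argument to get $U_i^*\in L^\infty(\RN)$, and deduces $U_i(x)\le C|x|^{2-N}$ (it glosses over the removable-singularity point you raise, relying implicitly on $U_i^*\in D^{1,2}(\RN)$). For the gradient, however, the paper exploits radial symmetry directly: writing $U_i(r)=U_i(|x|)$ and integrating the ODE
\[
(r^{N-1}(U_i)_r)_r=-r^{N-1}\Big(\mu_iU_i^{2^*-1}+\sum_{j\ne i}\beta_{ij}U_i^{p-1}U_j^{p}\Big)
\]
from $1$ to $R$, the pointwise decay just obtained makes the integral converge and yields $R^{N-1}|(U_i)_r(R)|\le C$. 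Your dyadic rescaling plus interior Schauder/gradient estimates achieves the same $|x|^{1-N}$ bound without using radial symmetry, which is more portable; the paper's ODE integration is slightly more elementary given that symmetry is already in hand.
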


\begin{proof}
	Define the Kelvin transformation:
	\begin{equation*}
		U^*_i(x):=|x|^{2-N}U_i(x^*),\quad x^*=\frac{x}{|x|^2},
	\end{equation*}
then $U^{*}_{1},\ldots,U^{*}_{k}\in D^{1,2}(\RN)$  and $ (U^{*}_{1},\ldots,U^{*}_{k}) $ satisfies the same system \eqref{RN system}.
By a standard Br{\'e}zis-Kato argument \cite{Brezis1979}, we see that $U^{*}_{1},\ldots,U^{*}_{k}\in L^\iy(\RN)$.
Therefore, there exists $C>0$ such that
\begin{equation*}
	U^*_i(x^*)=|x|^{N-2}U_i(x)\le C,
\end{equation*}
then
\begin{equation}\label{radial U_i bounded}
	\sum\limits_{i=1}^k U_i(x)\le C|x|^{2-N}.
\end{equation}
On the other hand, we note that $U_1,U_2,\dots,U_k$ are radially symmetric decreasing. We also have $U_1,U_2,\dots,U_k\in L^\iy(\RN)$, and then
\begin{equation*}
	\sum\limits_{i=1}^k U_i(x)\le C(1+|x| )^{2-N}.
\end{equation*}
Moreover, by standard elliptic regularity theory, we have that $U_1,U_2,\dots,U_k\in C^2(\RN)$. We write $U_i(|x|)=U_i(x)$ for convinience. Then
\begin{equation*}
	(r^{N-1} (U_i)_r)_r=-r^{N-1}(\mu_iU_i^{2^*-1}+\sum_{j=1,j\ne i}^{k}\beta_{ij}U_i^{\frac{2^*}{2}-1}U_j^{\frac{2^*}{2}}),
\end{equation*}
and so for any $R\ge 1$, we see from \eqref{radial U_i bounded} that
\begin{equation*}
	\begin{aligned}
	    R^{N-1}|(U_i)_r(R)|
	    & \le |(U_i)_r(1)|+\int_{1}^{R}r^{N-1}(\mu_iU_i^{2^*-1}+\sum_{j=1,j\ne i}^{k}\beta_{ij}U_i^{\frac{2^*}{2}-1}U_j^{\frac{2^*}{2}})~\mathrm{d}r\\
	    & \le C+C\int_{1}^{+\iy}r^{N-1}r^{-N-2}~\mathrm{d}r \le C.
	\end{aligned}
\end{equation*}
Therefore, 
it is easy to see that $|\nabla U_i(x)|\le C(1+|x| )^{2-N}$ for some $C>0$.
Then there exists $C>0$ such that
\begin{equation*}
	\sum\limits_{i=1}^k |\nabla U_i(x)|\le C(1+|x| )^{2-N}.
\end{equation*}
\end{proof}

\vskip 0.1in

\section{Proof of Theorem \ref{main theorem} }\label{section 4}

In this section, without loss of genrality, we assume that 
$$-\lambda_1(\om)<\la_1\le \la_2\le\dots\le \la_k <0 .$$
Recall the definition of $B $ in \eqref{least energy B}, since
\begin{equation*}
	\int\limits_{\om} (|\nabla u|^2+\la_iu^2)~\mathrm{d}x
	\ge (1+\frac{\la_i}{\la_1(\om)} ) \int\limits_{\om} |\nabla u|^2~\mathrm{d}x,\quad i=1,2,\dots,k,
\end{equation*}
it is standard to see that $B>0 $.
Recall \cite{Brezis1983} that the Br{\'{e}}zis-Nirenberg problem
\begin{equation}\label{Brezis-Nirenberg problem mu}
	-\Delta u+\lambda_i u=\mu_i |u|^{2^*-2}u, \quad u\in H^1_0(\om) 	
\end{equation}
has a positive least energy solution $u_{\mu_i} \in C^2(\Omega)\cap C(\bar{\Omega} ) $ with energy
\begin{equation}\label{Brezis-Nirenberg least energy mu}
	\begin{aligned}
		B_{\mu_i}:= & \dfrac{1}{2}\int\limits_{\om}(|\nabla u_{\mu_i}|^2+\lambda_iu_{\mu_i}^2)~\mathrm{d}x-\frac{\mu_i}{2^*}\int\limits_{\om} u_{\mu_i}^{2^*} ~\mathrm{d}x,\\
		< & \dfrac{1}{N}\mu_i^{-\frac{N-2}{2}}S^{N/2}, \quad i=1,2,\dots,k.
	\end{aligned}
\end{equation}
We need the following lemma from \cite{Chen2015}.

\begin{lemma}\label{Brezis-Lieb type lemma }
	Let $u_n\rightharpoonup u, v_n \rightharpoonup v $ in $H_0^1(\om) $
as $n\to \iy $, then passing to a subsequence, there holds
\begin{equation*}
 	\lim_{n\to\iy} \int\limits_\om (|u_n|^p|v_n|^p-|u_n-u|^p|v_n-v|^p-|u|^p|v|^p ) ~\mathrm{d}x=0.
 \end{equation*} 
\end{lemma}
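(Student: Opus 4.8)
The plan is to recast the statement as the classical Br\'ezis--Lieb splitting argument, the only genuinely new ingredient being a \emph{product} version of the scalar pointwise inequality. First I would pass to a subsequence so that $u_n\to u$, $v_n\to v$ strongly in $L^q(\om)$ for every $q<2^*$ and almost everywhere in $\om$ (this comes from the compact Sobolev embedding, via strong $L^2$ convergence). Set $a_n:=u_n-u\rightharpoonup 0$ and $b_n:=v_n-v\rightharpoonup 0$, so that $a_n\to0$, $b_n\to0$ a.e.\ and $\{a_n\},\{b_n\}$ stay bounded in $L^{2p}(\om)$ because $\{u_n\},\{v_n\}$ are bounded in $H_0^1(\om)\hookrightarrow L^{2p}(\om)$. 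By H\"older's inequality all of $|u_n|^p|v_n|^p$, $|a_n|^p|b_n|^p$, $|u|^p|v|^p$ lie in $L^1(\om)$; and since $u_n=a_n+u$, $v_n=b_n+v$, the quantity to be estimated equals $|a_n+u|^p|b_n+v|^p-|a_n|^p|b_n|^p-|u|^p|v|^p$, which tends to $0$ pointwise a.e.\ as $a_n,b_n\to0$. Thus it suffices to show its $L^1$ norm tends to $0$.

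The core step is the algebraic claim: for every $\e>0$ there is $C_\e>0$ such that, for all real $a,b,u,v$,
\begin{equation*}
\big|\,|a+u|^p|b+v|^p-|a|^p|b|^p-|u|^p|v|^p\,\big|\le \e\big(|a|^{2p}+|b|^{2p}\big)+C_\e\big(|u|^{2p}+|v|^{2p}\big).
\end{equation*}
To prove it I would decompose $|a+u|^p|b+v|^p-|a|^p|b|^p$ as the sum of $\big(|a+u|^p-|a|^p\big)\big(|b+v|^p-|b|^p\big)$, $\big(|a+u|^p-|a|^p\big)|b|^p$ and $|a|^p\big(|b+v|^p-|b|^p\big)$; the first piece tends pointwise to $|u|^p|v|^p$, matching the subtracted term, while the other two will generate small remainders. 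Each factor $\big||x+y|^p-|x|^p\big|$ is controlled by $\e|x|^p+C_\e|y|^p$ (the scalar Br\'ezis--Lieb inequality, valid for $1<p<2$), and then repeated use of Young's inequality $|x|^p|y|^p\le\delta|x|^{2p}+C_\delta|y|^{2p}$, with $\delta$ chosen of order $\e/C_\e$, turns every mixed term into a small multiple of $|a|^{2p}+|b|^{2p}$ plus a fixed ($\e$-dependent but finite) multiple of $|u|^{2p}+|v|^{2p}$. The bookkeeping of these constants is exactly the delicate point and the main obstacle of the proof: the coefficient in front of the ``vanishing part'' $|a|^{2p}+|b|^{2p}$ must be made genuinely small, whereas the coefficient of the ``limit part'' $|u|^{2p}+|v|^{2p}$ only has to stay finite.

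With the claim available, I would finish in the standard way. Put
\begin{equation*}
W_n^\e:=\Big(\big|\,|a_n+u|^p|b_n+v|^p-|a_n|^p|b_n|^p-|u|^p|v|^p\,\big|-\e\big(|a_n|^{2p}+|b_n|^{2p}\big)\Big)^{+}.
\end{equation*}
By the claim $0\le W_n^\e\le C_\e\big(|u|^{2p}+|v|^{2p}\big)\in L^1(\om)$ uniformly in $n$, while $W_n^\e\to 0$ a.e., so the dominated convergence theorem yields $\int_\om W_n^\e\,dx\to0$. Using the uniform bound $M:=\sup_n\int_\om\big(|a_n|^{2p}+|b_n|^{2p}\big)\,dx<\infty$ we get
\begin{equation*}
\limsup_{n\to\infty}\int_\om\big|\,|u_n|^p|v_n|^p-|u_n-u|^p|v_n-v|^p-|u|^p|v|^p\,\big|\,dx\le \e M,
\end{equation*}
and letting $\e\to0$ completes the proof. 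Everything besides the product pointwise inequality is the routine Br\'ezis--Lieb machinery.
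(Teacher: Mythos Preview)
Your argument is sound and is exactly the standard route to such product Br\'ezis--Lieb splittings: pass to an a.e.\ convergent subsequence, isolate the pointwise product inequality
\[
\big|\,|a+u|^{p}|b+v|^{p}-|a|^{p}|b|^{p}-|u|^{p}|v|^{p}\,\big|\le \e\big(|a|^{2p}+|b|^{2p}\big)+C_{\e}\big(|u|^{2p}+|v|^{2p}\big),
\]
and then run the truncation $W_n^{\e}$ together with dominated convergence. Your decomposition of $|a+u|^{p}|b+v|^{p}-|a|^{p}|b|^{p}$ into the three telescoping pieces is correct, the scalar estimate $||x+y|^{p}-|x|^{p}|\le \e|x|^{p}+C_{\e}|y|^{p}$ (and its refinement with $|y|^{p}$ subtracted) is valid for $1<p<2$, and the bookkeeping with Young's inequality---making the coefficient of $|a|^{2p}+|b|^{2p}$ small while allowing the coefficient of $|u|^{2p}+|v|^{2p}$ to depend on $\e$---goes through exactly as you indicate. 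There is no gap.

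As for comparison: the paper does not supply its own proof of this lemma at all. It simply quotes the result from \cite{Chen2015} (Chen--Zou, \emph{Calc.\ Var.\ PDE}, 2015), where the argument is precisely the one you have reconstructed. So your proposal is not merely equivalent to the paper's proof---it \emph{is} the proof the paper defers to, and you have recovered it blind.
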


\vskip 0.1in

\begin{remark}
	Similar to the proof of Theorem \ref{main theorem_limit} in Sect. \ref{section 3}, before starting the proof, we shall introduce \textbf{the idea of induction} again.
	From now on, we assume that Theorem \ref{main theorem} hold true for $(k-1)$-coupled system, that is,

	{\textbf{ the $(k-1)$-coupled system has a positive least energy solution. }}\\
	We shall give out the proof of these results for $ k$-coupled system in the sequel. Then by the idea of induction, these theorems hold true for arbitrary $k$-coupled system. 
\end{remark}

\vskip 0.1in
For convinience, here are some notations. Consider the $k$-coupled system \eqref{problem}, where the coefficients $\la_i, \mu_i, \beta_{ij}$ are fixed.
For each $\Theta\subsetneqq \{1, . . . ,k\}$ , we obtain the following $(k -|\Theta |)$-coupled system
	\begin{equation}\label{problem k}
		\left\{
		\begin{aligned}
			-\Delta u_i & +\lambda_iu_i=\mu_i u_i^{2p-1}+\sum_{j=1,j\ne i}^{k} \beta_{ij} u_{i}^{p-1}u_{j}^{p} \quad \hbox{in}\;\Omega,\\
			u_i&>0~~ \hbox{in}\; \Omega \quad \hbox{and}\quad u_i=0~~ \hbox{on}\;\partial\Omega, \quad i,j\in \{1,2,\cdots, k\}\backslash \Theta,
		\end{aligned}
		\right.
	\end{equation}
by 
replacing $\la_i, \mu_i,\beta_{ij},\beta_{ji}$ with 0 if $i \in \Theta $, where $ |\Theta| $ is the cardinality of $\Theta $. 
Note that the nontrivial solutions of \eqref{problem k} can be extended to the semi-trivial solutions of $k$-coupled system \eqref{problem}.
\vskip 0.1in
Denote $\tau=k-|\Theta|$. Indeed, we can establish $C_{k}^{\tau}$ different $\tau$-coupled systems from \eqref{problem}.
For every fixed $\Theta$, we denote $\{i_1,\dots,i_{\tau}\}=\{1,2,\dots,k\}\backslash \Theta$ and $ B_{\mu_{i_1}\dots\mu_{i_\tau}} $ as the least energy of the $\tau $-coupled system \eqref{problem k} with coefficients $(\mu_{i_1},\dots,\mu_{i_\tau})$, that is,
\begin{equation}
	B_{\mu_{i_1}\dots\mu_{i_\tau}}:=\inf\{E(\vu)~:~\vu=(u_1,\dots,u_k)~\hbox{solves}~\eqref{problem}~~\hbox{and}~u_i=0~\hbox{iff}~i\in \Theta \}.
\end{equation}
Denote
\begin{equation}
	\bar{ B}_\tau :=	\min\limits_{\forall \{l_1,\dots,l_{\tau}\}\subsetneqq\{1,2,\dots,k\} } \{B_{\mu_{l_1}\dots\mu_{l_\tau}} \}.
\end{equation}
Define
\begin{equation}\label{mountain B}
	\hb:=\inf_{h\in \Gamma}\max_{t\in[0,1]}E(h(t)),
\end{equation}
where $\Gamma=\{h\in C([0,1],H)~:~h(0)=\vec{ \textbf{0}},~E(h(1))<0 \}. $
By \eqref{energy functional E}, we see that for any $\vu\in H, \vu\ne \vec{ \textbf{0}} $,
\begin{equation}\label{maxE}
	\begin{aligned}
		\max_{t>0}E(t\vu)& =E(t_{\vu} \vu )\\
		& = \frac{1}{N} t^2_{\vu}\int\limits_{\om}\sum_{i=1}^{k}(|\nabla u_i|^2+\lambda_iu_i^2)\\
	    &=\dfrac{1}{N} t^{2^*}_{\vu} \int\limits_{\om}(\sum_{i=1}^{k}\mu_i|u_i|^{2p}+\sum_{i=1,j>i}^{k} 2\beta_{ij} |u_{i}|^{p}|u_{j}|^{p} ),
	\end{aligned}
\end{equation}
where $t_{\vu}>0$ satisfies
\begin{equation}\label{tu function}
	t_{\vu}^{2p-2}=\dfrac{\int_{\om}\sum_{i=1}^{k}(|\nabla u_i|^2+\lambda_iu_i^2) }{\int_{\om}(\sum_{i=1}^{k}\mu_i|u_i|^{2p}+\sum_{i=1,j>i}^{k} 2\beta_{ij} |u_{i}|^{p}|u_{j}|^{p} ) }.
\end{equation}
Note that $t_{\vu}~\vu\in \MR' $, where
\begin{equation}\label{M' definition}
	\begin{aligned}
		\MR' :=\bigg\{  \vu\in H\backslash \{\vec{\textbf{0} } \}~ & :~
		G(\vu)  := \int\limits_{\om}\sum_{i=1}^{k}(|\nabla u_i|^2+\lambda_iu_i^2)\\
		& -  \int\limits_{\om}(\sum_{i=1}^{k}\mu_i|u_i|^{2p}+\sum_{i=1,j>i}^{k} 2\beta_{ij} |u_{i}|^{p}|u_{j}|^{p} )  = 0	\bigg\}.
	\end{aligned}
\end{equation}
Note that $\MR\subset\MR' $, one has that $0< \hb\le B $. It is easy to check that
\begin{equation}\label{hb E}
	\hb=\inf_{\vec{\textbf{0} }\ne \vu\in H }\max_{t>0}E(t \vu)
	=\inf_{\vu\in\MR'}E(\vu).
\end{equation}
In fact, by \eqref{maxE} we have that 
\begin{equation*}
	\inf\limits_{\vec{\textbf{0} }\ne \vu\in H }\max\limits_{t>0}E(t \vu)=\inf\limits_{\vu\in\MR'}E(\vu),
\end{equation*}
and
\begin{equation*}
	\begin{aligned}
    E'(t\vu)(t\vu)
    & =t^2\int_{\om}\sum_{i=1}^{k}(|\nabla u_i|^2+\lambda_iu_i^2)  -t^{2p}\int_{\om}(\sum_{i=1}^{k}\mu_i|u_i|^{2p}+\sum_{i=1,j>i}^{k} 2\beta_{ij} |u_{i}|^{p}|u_{j}|^{p} )\\
    & \ge 0 \quad \hbox{if}~~~0\le t\le t_{\vu}~.
	\end{aligned}
\end{equation*}
On the one hand, we see that $E(t\vu)<0$ for $\vu\in H\backslash\{\vec{\textbf{0}}\} $ and $t>0$ large enough. Then by the definition of $\hb$, we have
\begin{equation*}
	\hb\le \inf\limits_{\vec{\textbf{0} }\ne \vu\in H }\max\limits_{t>0}E(t \vu).
\end{equation*}
On the other hand, $\MR'$ separates $H$ into two components
\begin{equation*}
	\begin{aligned}
		&\MR'_1=\{\vu\in H\backslash\{\vec{\textbf{0}}\}~:~E'(\vu)\vu> 0\} \cup \{\vec{\textbf{0}}\},\\
		&\MR'_2=\{\vu\in H\backslash\{\vec{\textbf{0}}\}~:~E'(\vu)\vu<0\}.
	\end{aligned}
\end{equation*}
Since $E'(t\vu)(t\vu)\ge 0$ if $0\le t\le t_{\vu}$, we have
\begin{equation*}
	\MR'_1=\{t\vu~:~ \vu\in H\backslash\{\vec{\textbf{0}}\},~0\le t< t_{\vu}\},\quad \MR'_2=\{t\vu~:~ \vu\in H\backslash\{\vec{\textbf{0}}\},~t>t_{\vu}\}.
\end{equation*}
Note that for any $\vec{\textbf{0}}\ne t\vu\in \MR'_1$,
\begin{equation*}
	\begin{aligned}
		E(t\vu) & = \frac{t^2}{2}\int_{\om}\sum_{i=1}^{k}(|\nabla u_i|^2+\lambda_iu_i^2)-\frac{t^{2p}}{2p}\int_{\om}(\sum_{i=1}^{k}\mu_i|u_i|^{2p}+\sum_{i=1,j>i}^{k} 2\beta_{ij} |u_{i}|^{p}|u_{j}|^{p} )\\
		& >\frac{1}{2p}\bigg[t^2\int_{\om}\sum_{i=1}^{k}(|\nabla u_i|^2+\lambda_iu_i^2)  -t^{2p}\int_{\om}(\sum_{i=1}^{k}\mu_i|u_i|^{2p}+\sum_{i=1,j>i}^{k} 2\beta_{ij} |u_{i}|^{p}|u_{j}|^{p} ) \bigg]\\
		& = \frac{1}{2p}E'(t\vu)(t\vu)> 0,\quad \hbox{if}~~0< t< t_{\vu},
	\end{aligned}
\end{equation*}
then every $h \in \Gamma$ has to cross $\MR'$ and $\inf\limits_{\vu\in\MR'}E(\vu)\le \hb$.
It follows that \eqref{hb E} holds.

\vskip 0.2in

Next we introduce the key lemma in the proof of Theorem \ref{main theorem} which gives out an estimate of $ \hb $.

\begin{lemma}\label{key lemma}(the comparison lemma) Let $\beta_{ij}>0 $, then
	$$ \hb<\min \{ \bar{B}_1, \bar{B}_2,\dots,\bar{B}_{k-1}, A \}. $$
\end{lemma}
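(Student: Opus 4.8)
The goal is to prove $\hb < \min\{\bar B_1,\dots,\bar B_{k-1}, A\}$, which is the key energy estimate needed to run the concentration-compactness argument for Theorem \ref{main theorem}. The overall strategy is to build explicit test functions on $\MR'$ (equivalently, suitable paths in $\Gamma$) whose energy lies strictly below each competing level, exactly in the spirit of \cite{Chen2015,Lin2005}.

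\textbf{Step 1: $\hb < A$.} Here I would use the standard Br\'ezis--Nirenberg trick: take the Aubin--Talenti instanton $U_{\e,0}$ truncated by a cutoff $\eta$ supported in $\Omega$, and consider the synchronized family $(\tilde t_1 \eta U_\e, \dots, \tilde t_k \eta U_\e)$ where $(\tilde t_1,\dots,\tilde t_k)$ attains $d_k$ from Lemma \ref{existence algebric}. Projecting onto $\MR'$ via the scalar $t_{\vu}$ from \eqref{tu function} and using the classical expansions
\[
\int |\nabla(\eta U_\e)|^2 = S^{N/2} + O(\e^{N-2}),\qquad \int (\eta U_\e)^{2^*} = S^{N/2} + O(\e^N),
\]
together with the fact that for $N\ge 5$ the linear perturbation contributes $\int \lambda_i (\eta U_\e)^2 \sim c\,\e^2$ with $c<0$ (since $\lambda_i<0$), one gets $\max_{t>0} E(t\cdot) \le d_k S^{N/2} - c'\e^2 + O(\e^{N-2}) < d_k S^{N/2} = A$ for $\e$ small, using that $A = d_k S^{N/2}$ established in the proof of Theorem \ref{main theorem_limit}. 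The point $N\ge 5$ is what makes $\e^2$ dominate the $\e^{N-2}$ error.

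\textbf{Step 2: $\hb < \bar B_\tau$ for each $\tau\le k-1$.} Fix $\tau$ and the index set $\{l_1,\dots,l_\tau\}$ achieving $\bar B_\tau = B_{\mu_{l_1}\dots\mu_{l_\tau}}$; by the induction hypothesis this $\tau$-coupled system has a positive least energy solution, call it $\vec w = (w_{l_1},\dots,w_{l_\tau})$, supported on those $\tau$ components. I would then attach one more component by adding $s\varphi$ (for $\varphi \in C_0^\infty(\Omega)\setminus\{0\}$, $s$ a small scalar) in a slot outside $\{l_1,\dots,l_\tau\}$, project the enlarged vector $(w_{l_1},\dots,w_{l_\tau}, s\varphi, \dots)$ back onto $\MR'$ and expand in $s$. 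This is exactly the computation already performed in Lemma \ref{main lemma 1} (and in the $d_k<d_{k-1}$ argument inside Lemma \ref{existence algebric}): because $\beta_{ij}>0$, the cross term $\sum_i 2\beta_{i\,\cdot}|w_i|^p|s\varphi|^p$ enters with a favorable sign and, since $1<p<2$, the leading correction is of order $|s|^p$ which beats the $s^2$ gradient cost, giving
\[
\hb \le E\big(t_{\vu}(w_{l_1},\dots,s\varphi,\dots)\big) = \bar B_\tau - c\,|s|^p + o(|s|^p) < \bar B_\tau
\]
for $|s|$ small, with $c>0$. Iterating over which extra slot is filled, and noting the nested chain already observed ($\bar B_{k-1}<\dots<\bar B_1$, which itself follows by the same one-component-at-a-time argument), yields $\hb<\bar B_\tau$ for all $\tau\le k-1$; combined with Step 1 this gives the full claim. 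One should also treat the degenerate case where the least energy of the $\tau$-system is actually attained by a Br\'ezis--Nirenberg single-bump (this only happens for $\tau=1$), in which case $\bar B_1 = \min_i B_{\mu_i}$ and the same $|s|^p$ perturbation works verbatim.

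\textbf{Main obstacle.} The delicate point is making the $|s|^p$ expansion of $t_{\vu}$ rigorous: one must differentiate the defining relation \eqref{tu function} for $t(s)$ near $s=0$ when the nonlinearity has the non-smooth exponent $p\in(1,2)$, showing $t(s) = 1 - c_1|s|^p(1+o(1))$ and hence $E(t(s)\vu) = \bar B_\tau - c_2|s|^p + o(|s|^p)$ with $c_2>0$. This is precisely the computation carried out in Lemma \ref{main lemma 1} for the ball problem and in Lemma \ref{existence algebric} for the algebraic system, so I would reuse that mechanism; the only new bookkeeping is carrying the linear terms $\lambda_i u_i^2$ through (harmless, they are lower order and smooth) and checking that the reference solution $\vec w$ of the $\tau$-system has the regularity/decay needed for the cross integrals $\int |w_i|^p|\varphi|^p$ to be finite and positive — which holds since $\vec w$ is a positive classical solution and $\varphi\in C_0^\infty$. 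A secondary subtlety is that $\hb$ rather than $B$ appears here; but since $\MR\subset\MR'$ gives $\hb\le B$ and one ultimately shows $\hb=B$ is attained, bounding $\hb$ from above is exactly what is needed, and the test vectors constructed above lie in $\MR'$ by the projection, so no extra work is required.
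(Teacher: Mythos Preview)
Your proposal is correct and follows essentially the same approach as the paper: truncated limit profile to get $\hb<A$ via the Br\'ezis--Nirenberg $\e^2$ gain, and the $|s|^p$ perturbation of a lower-order least energy solution to beat each $\bar B_\tau$. Two cosmetic differences worth noting: in Step~1 you use the explicit synchronized form $(\tilde t_1\eta U_\e,\dots,\tilde t_k\eta U_\e)$ while the paper rescales the full least energy solution $(U_1,\dots,U_k)$ of \eqref{RN system} directly---these coincide by Theorem~\ref{main theorem_limit}(1); in Step~2 you fill only one extra slot with $s\varphi$ and leave the rest zero, whereas the paper fills all remaining slots with $s\phi,s^2\phi,\dots,s^i\phi$---your version is slightly cleaner since $\MR'$ permits zero components and the higher powers contribute only $o(|s|^p)$ anyway.
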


\begin{proof}
We will prove the lemma in three steps.
\vskip 0.1in

\noindent\textbf{Step 1}\quad ( $\hb<A $ )

Without loss of genrality, we may assume that $0\in\Omega. $
Then there exists $\rho>0 $ such that $ B(0,2\rho):=\{x~:~|x|\le 2\rho \}\subset \Omega $.
Let $\psi\in C_0^1(B(0,2\rho)) $ be a nonnegative function with $0\le \psi\le1 $ and $\psi\equiv 1 $ for $|x| \le \rho.  $
Recall that $\vec{\textbf{U}}=(U_1,U_2,\dots,U_k) $ in Theorem \ref{main theorem_limit}. We define
\begin{equation*}
	U_i^{\varepsilon}(x):=\varepsilon^{-\frac{N-2}{2}}U_i(x/\varepsilon ),
	\quad
	u_i^{\varepsilon}:=\psi U_i^{\varepsilon},
	\quad i=1,2,\dots, k.
\end{equation*}
Then it is easy to see that
\begin{equation*}
	\int\limits_{\RN}|\nabla U_i^{\varepsilon}|^2
	=\int\limits_{\RN}|\nabla U_i|^2,	\quad
	\int\limits_{\RN}|U_i^{\varepsilon}|^{2^*}
	=\int\limits_{\RN}|U_i|^{2^*},\quad
	i=1,2,\dots,k.
\end{equation*}
It's proved in \cite{Chen2015} that
\begin{equation}\label{estimate 1}
	\int\limits_{\om}|\nabla u_i^{\varepsilon}|^2 
	\le \int\limits_{\RN}|\nabla U_i|^2+O(\varepsilon^{N-2}) ,
\end{equation}
\begin{equation}\label{estimate 2}
	\int\limits_{\om}|u_i^{\varepsilon}|^{2^*}
	\ge \int\limits_{\RN}|U_i|^{2^*}+O(\varepsilon^{N}) ,
\end{equation}
\begin{equation}\label{estimate 3}
	\int\limits_{\om}|u_i^{\varepsilon}|^{\frac{2^*}{2}}|u_j^{\varepsilon}|^{\frac{2^*}{2}}
	\ge \int\limits_{\RN}|U_i|^{\frac{2^*}{2}}|U_j|^{\frac{2^*}{2}} +O(\varepsilon^{N}) ,
\end{equation}
\begin{equation}\label{estimate 4}
	\int\limits_{\om}|u_i^{\varepsilon}|^{2}
	\ge C\varepsilon^2+O(\varepsilon^{N-2}) ,
\end{equation}
where $ C$ is a positive constant. Recall that $I(U_1,U_2,\dots,U_k)=A $, we have
\begin{equation*}
	NA=\int\limits_{\RN} \sum_{i=1}^{k}|\nabla U_i |^{2}
	=\int\limits_{\RN}(\sum_{i=1}^{k}\mu_i|U_i|^{2^*}+\sum_{i=1,j>i}^{k} 2\beta_{ij} |U_{i}|^{\frac{2^*}{2}}|U_{j}|^{\frac{2^*}{2}} ).
\end{equation*}
Combining this with \eqref{estimate 1}-\eqref{estimate 4} and recalling that $\la_i<0,~2p=2^*,~N\ge 5$, we have for any $ t>0$ that
\begin{equation*}
	\begin{aligned}
		E(tu_1^{\varepsilon},\dots,tu_k^{\varepsilon})
    & =\frac{1}{2}t^2\int\limits_{\om}\sum_{i=1}^{k}(|\nabla u_i^{\varepsilon} |^2+\lambda_i(u^{\varepsilon}_i)^2)\\
	& -\dfrac{1}{2p} t^{2p} \int\limits_{\om}(\sum_{i=1}^{k}\mu_i|u_i^{\varepsilon}|^{2p}+\sum_{i=1,j>i}^{k} 2\beta_{ij} |u_i^{\varepsilon}|^{p}|u_j^{\varepsilon}|^{p} )\\
	& \le \frac{1}{2}\bigg(	\int\limits_{\RN} \sum_{i=1}^{k}|\nabla U_i |^{2} -C\varepsilon^2+O(\varepsilon^{N-2}) \bigg)t^2\\
	& -\frac{1}{2^*}\bigg(\int\limits_{\RN}(\sum_{i=1}^{k}\mu_i|U_i|^{2^*}+\sum_{i=1,j>i}^{k} 2\beta_{ij} |U_{i}|^{\frac{2^*}{2}}|U_{j}|^{\frac{2^*}{2}} ) +O(\varepsilon^N)  \bigg) t^{2^*}\\
	& =\frac{1}{2}\bigg(NA-C\varepsilon^2+O(\varepsilon^{N-2}) \bigg)t^2
	-\frac{1}{2^*}\bigg(NA+O(\varepsilon^N)  \bigg) t^{2^*}\\
	& \le \frac{1}{N}\bigg(NA-C\varepsilon^2+O(\varepsilon^{N-2}) \bigg)\bigg(\frac{NA-C\varepsilon^2+O(\varepsilon^{N-2}) }{NA+O(\varepsilon^{N})}  \bigg)^{\frac{N-2}{2}}\\
	&<A \quad \hbox{for} ~~\varepsilon>0~~\hbox{small enough}.
	\end{aligned}
\end{equation*}
Hence, for $\varepsilon>0 $ small enough, there holds
\begin{equation}
	\hb \le \max_{t>0}E(tu_1^{\varepsilon},\dots,tu_k^{\varepsilon}) <A.
\end{equation} 

From now on, we redenote $\hb_k:=\hb $ as the minimax value defined in \eqref{mountain B} for the $k$-coupled system.

\noindent\textbf{Step 2}\quad ($ k=3 $ and $\hb_3<\min \{\bar{B}_1, \bar{B}_2\} $)  

In \cite{Chen2015}, it's proved that the $2$-coupled system
	\begin{equation}\label{problem 2}
		\left\{
		\begin{aligned}
			-\Delta u+\lambda_iu&=\mu_i u^{2p-1}+\beta_{ij} u^{p-1}v^{p}, \quad x\in\Omega,\\
			-\Delta v+\lambda_jv&=\mu_j v^{2p-1}+\beta_{ij} v^{p-1}u^{p}, \quad x\in\Omega,\\
			u,v&\ge0~~ \hbox{in}\; \Omega \quad \hbox{and}\quad u=v=0~~ \hbox{on}\;\partial\Omega,
		\end{aligned}
		\right.
	\end{equation}
has a positive least energy solution $(u_{ij}, v_{ij})$ with energy $B_{\mu_i\mu_j} $ and
\begin{equation}
	B_{\mu_i\mu_j}<\min \{ B_{\mu_i},B_{\mu_j}  \},
\end{equation}
then $\bar{B}_2=\min\{B_{\mu_1\mu_2},B_{\mu_1\mu_3},B_{\mu_{2}\mu_3} \}<\bar{B}_1$, it remains to prove $\hb_3<\bar{B}_2 $.

\vskip 0.1in

We shall prove $\hb_3< B_{\mu_1\mu_2}$. 
\vskip 0.1in

Recall \eqref{tu function}, we note that for any $s\in\R $, there exists a unique $t(s):=t_{u_{12},v_{12},s\phi }$ such that
$ (t(s)u_{12}, t(s)v_{12}, t(s)s\phi ) \in\MR' $ and
\begin{equation*}
	\begin{aligned}
	    t(s)^{2p-2} & \bigg( \int\limits_{\om}(\mu_1u_{12}^{2p}+\mu_2v_{12}^{2p}+\mu_3|s|^{2p}\phi^{2p}\\
	    + & 2\beta_{12} u_{12}^{p}v_{12}^{p}+2\beta_{13}|s|^p u_{12}^{p}\phi^{p}+ 2\beta_{23}|s|^p v_{12}^{p}\phi^{p} ) \bigg) \\
	    = & \int\limits_{\om}(|\nabla u_{12}|^2+\lambda_1u_{12}^2 +|\nabla v_{12}|^2+\lambda_2v_{12}^2 + s^2|\nabla \phi|^2+\lambda_3s^2\phi^2 ),
	\end{aligned}
\end{equation*}
where $(u_{12},v_{12}) $ is a positive least energy solution of \eqref{problem 2} and $\phi\in H^1_0(\om) $ is a positive function.
Recall that
\begin{equation*}
	\begin{aligned}
		NB_{\mu_1\mu_2}
		& =\int\limits_{\om}(\mu_1u_{12}^{2p}
		+2\beta_{12} u_{12}^{p}v_{12}^{p}+\mu_2v_{12}^{2p})\\
		& = \int\limits_{\om}(|\nabla u_{12}|^2+\lambda_1u_{12}^2 +|\nabla v_{12}|^2+\lambda_2v_{12}^2 )~:=c_{12},
	\end{aligned}
\end{equation*}
and we denote
\begin{equation}
	\begin{aligned}
	g(s) & :=t(s)^{2p-2}\\
	& =\frac{ c_{12}+s^2\int_{\om}(|\nabla \phi|^2+\lambda_3\phi^2 ) }{c_{12}+  \int_{\om}(\mu_3|s|^{2p}\phi^{2p}+2\beta_{13}|s|^p u_{12}^{p}\phi^{p}+ 2\beta_{23}|s|^p v_{12}^{p}\phi^{p} ) }
	:=\frac{A(s)}{B(s)}.
	\end{aligned}
\end{equation}
Note that $t'(s)=\frac{1}{2p-2}g(s)^{\frac{3-2p}{2p-2}}g'(s)$ and
\begin{equation}\label{g(0)=1 }
	\lim\limits_{s\to 0}g(s)= g(0)=t(0)=1,
	\quad \lim\limits_{s\to 0}A(s)=\lim\limits_{s\to 0}B(s)=c_{12}.
\end{equation}
By direct computations we have
\begin{equation*}
	\begin{aligned}
		g'(s)=\frac{A'(s)}{B(s)}-\frac{A(s)B'(s) }{B^2(s)} := F_1(s)-F_2(s).
	\end{aligned}
\end{equation*}
Note that $1<p<2$, then 
\begin{equation}\label{F1s}
	\lim_{s\to0} \frac{F_1(s)}{|s|^{p-2}s}=
	\lim_{s\to0}\frac{2s^{2-p}\int(|\nabla \phi|^2+\lambda_3\phi^2 )  }{c_{12}+  \int(\mu_3|s|^{2p}\phi^{2p}+2\beta_{13}|s|^p u_{12}^{p}\phi^{p}+ 2\beta_{23}|s|^p v_{12}^{p}\phi^{p} )}=0.
\end{equation}
On the other hand, we see that
\begin{equation*}
	\begin{aligned}
		F_2(s)=\frac{2p|s|^{p-2}s }{B^2(s)} & \bigg[
		c_{12}+ s^2(\int\limits_{\om}|\nabla \phi|^2+\lambda_3\phi^2)\bigg]\\
		& \cdot\big(|s|^p\int\limits_{\om} \mu_3\phi^{2p}+\int\limits_{\om}\beta_{13}u_{12}^p\phi^p+\beta_{23}v_{12}^p\phi^p\big),
	\end{aligned}
\end{equation*}
then
\begin{equation}\label{F2s}
	\lim_{s\to0}\frac{F_2(s)}{|s|^{p-2}s}
	=\frac{2p}{c_{12}}\int\limits_{\om}\beta_{13}u_{12}^p\phi^p+\beta_{23}v_{12}^p\phi^p.
\end{equation}
Then we have
\begin{equation*}
	\lim_{s\to0}\frac{t'(s)}{|s|^{p-2}s}=-\frac{p}{(p-1)c_{12}}\int\limits_{\om}\beta_{13}u_{12}^p\phi^p+\beta_{23}v_{12}^p\phi^p,
\end{equation*}
that is,
\begin{equation*}
	t'(s)=-\frac{p  }{(p-1)c_{12}}\big(\int\limits_{\om}\beta_{13}u_{12}^p\phi^p+\beta_{23}v_{12}^p\phi^p\big)~|s|^{p-2}s~(1+o(1)),\quad \hbox{as}\;~s\to 0,
\end{equation*}
and so
\begin{equation*}
	t(s)=1-\frac{1  }{(p-1)c_{12}}\big(\int\limits_{\om}\beta_{13}u_{12}^p\phi^p+\beta_{23}v_{12}^p\phi^p\big)~|s|^{p}~(1+o(1)),\quad \hbox{as}\;~s\to 0.
\end{equation*}
This implies that
\begin{equation*}
	t(s)^{2p}=1-\frac{2p  }{(p-1)c_{12}}\big(\int\limits_{\om}\beta_{13}u_{12}^p\phi^p+\beta_{23}v_{12}^p\phi^p\big)~|s|^{p}~(1+o(1)),\quad \hbox{as}\;~s\to 0.
\end{equation*}
Therefore, we deduce from \eqref{maxE} that
\begin{equation*}
	\begin{aligned}
		\hb_3 & \le E(t(s)u_{12},t(s)v_{12},t(s)s\phi )\\
		& = \frac{1}{N}t(s)^{2p}\int\limits_{\om}(\mu_1u_{12}^{2p}+\mu_2v_{12}^{2p}+\mu_3|s|^{2p}\phi^{2p}\\
	    & + 2\beta_{12} u_{12}^{p}v_{12}^{p}+2\beta_{13}|s|^p u_{12}^{p}\phi^{p}+ 2\beta_{23}|s|^p v_{12}^{p}\phi^{p} )\\
	    & = t(s)^{2p}\bigg[B_{\mu_1\mu_2}+\frac{1}{N}\int\limits_{\om}\mu_{3}|s|^{2p} \phi^{2p}+2\beta_{13}|s|^p u_{12}^{p}\phi^{p}+ 2\beta_{23}|s|^p v_{12}^{p}\phi^{p} \bigg]\\
	    & = B_{\mu_1\mu_2}+\frac{1}{N}\int\limits_{\om}\mu_{3}|s|^{2p} \phi^{2p}+2\beta_{13}|s|^p u_{12}^{p}\phi^{p}+ 2\beta_{23}|s|^p v_{12}^{p}\phi^{p}+o(|s|^p)\\
	    & -\frac{1}{N}~\frac{2p}{p-1}|s|^p\int\limits_{\om}\beta_{13}u_{12}^p\phi^p+\beta_{23}v_{12}^p\phi^p\\
	    & -\frac{1}{N}~\frac{2p}{(p-1)c_{12}}|s|^{2p}(\int\limits_{\om}\beta_{13}u_{12}^p\phi^p+\beta_{23}v_{12}^p\phi^p )( \int\limits_{\om}\mu_{3}|s|^p \phi^{2p}+2\beta_{13}u_{12}^{p}\phi^{p}+ 2\beta_{23}v_{12}^{p}\phi^{p} )\\
	    & = B_{\mu_1\mu_2}-(\frac{1}{2}-\frac{1}{N})|s|^p\int\limits_{\om} 2\beta_{13}u_{12}^{p}\phi^{p}+ 2\beta_{23}v_{12}^{p}\phi^{p}
	    +o(|s|^p)\\
	    & <B_{\mu_1\mu_2}  \quad \hbox{as}~~|s|>0~~\hbox{small enough}.
	\end{aligned}
\end{equation*}
That is $\hb_3<B_{\mu_1\mu_2}$. By a similar argument, we can prove that $\hb_3<B_{\mu_i\mu_j}$, then $\hb_3<\bar{B}_2 $, which implies that
	\begin{equation*}
		\hb_3<\min\{\bar{B}_1,\bar{B}_2 \}.
	\end{equation*}
This completes the proof of the case $k=3$.\\

\noindent\textbf{Step 3}\quad ( for general $k\ge 3$ )

Suppose that the $(k-i)$-coupled system \eqref{problem k} with coefficients $(\mu_1,\mu_2,\dots,\mu_{k-i})$ has a positive least energy solution $\textbf{w}_{k-i}=(\omega_1, \omega_2,\dots,\omega_{k-i})$ with energy $B_{\mu_1\dots\mu_{k-i}} $.
Denote
\begin{equation*}
	\WR:=(\omega_1, \omega_2,\dots,\omega_{k-i},s\phi,s^2\phi,\dots,s^{i}\phi),
\end{equation*}
where $\phi \in H^1_0(\om)$ and $ \phi\not\equiv0 $.

Recall \eqref{tu function}, we note that for any $s\in\R $, there exists a unique $t(s):=t_{\WR}$ such that
$ t(s)(\WR) \in\MR'$.
Then by similar argument as in {\textbf{Step 2} }, we have
		
\begin{equation*}
	\begin{aligned}
		\hb_k & \le E(t(s)(\omega_1,\dots,\omega_{k-i},s\phi,s^2\phi,\dots,s^{i}\phi))\\
		& = B_{\mu_1\dots\mu_{k-i}}-(\frac{1}{2}-\frac{1}{N})|s|^p
		\int\limits_{\om}\sum_{j=1}^{k-i}2\beta_{j,k-i+1}|\omega_j|^p|\phi|^p+o(|s|^p)\\
		& <B_{\mu_1\dots\mu_{k-i}},
	\end{aligned}
\end{equation*}
which implies that $\hb_k<\bar{B}_{k-i}$. This completes the proof.
\end{proof}


Let's begin to prove Theorem \ref{main theorem}.

\vskip 0.1in

{\it Proof of Theorem \ref{main theorem}.~~}
Assume that $\beta_{ij}>0$. Since the functional $E$ has a mountain pass structure, by the mountain pass theorem \cite{ambrosetti1973dual,willem1997minimax} there exists $\{\vu_n\}\in H $, such that
\begin{equation*}
	\lim_{n\to+\iy} E(\vu_n)=\hb,\quad \lim_{n\to+\iy} E'(\vu_n)=0,
\end{equation*}
where $\vu_n=(u_{1n},u_{2n},\dots,u_{kn}) $. By standard argument it is easy to see that $\{\vu_n \} $ is bounded in $H$, and so we may assume that $(u_{1n},\dots,u_{kn}) \rightharpoonup (u_1,\dots,u_k) $ weakly in $H$.
Passing to a subsequence, we may assume that $u_{in}\rightharpoonup u_i $ weakly in $L^{2p}(\om)$, $|u_{in}|^{q-1}u_{in}\rightharpoonup |u_i|^{q-1}u_i$ weakly in $L^{2p/q}(\om)$ if$ ~1<q<2p$, and $u_{in}\to u_i $  strongly in $L^2(\om)$. Since $E'(\vu_n)\to 0$, then we have $E'(u_1,\dots,u_k)=0 $.

Set $\sigma_{in}=u_{in}-u_i$, then by Br{\'{e}}zis-Lieb Lemma, 
there holds
\begin{equation}\label{bl1}
	|u_{in}|^{2p}_{2p}=|u_{i}|^{2p}_{2p}+|\sigma_{in}|^{2p}_{2p}+o(1),\quad i=1,2,\dots,k.
\end{equation}
Note that $\vu_n\in\MR $ and $E'(\vu_n)=0 $. Then combined with Lemma \ref{Brezis-Lieb type lemma }, we have
\begin{equation}\label{bl2}
	\int\limits_{\om}|\nabla\sigma_{in}|^2
	-\int\limits_{\om}(\mu_i|\sigma_{in}|^{2p}+\sum_{j=1,j\ne i}^{k} \beta_{ij} |\sigma_{in}|^{p}|\sigma_{jn}|^{p} )=o(1),
\end{equation}
\begin{equation}\label{bl3}
	E(u_{1n},\dots,u_{kn})=E(u_{1},\dots,u_{k})+I(\sigma_{1n},\dots,\sigma_{kn})+o(1),
\end{equation}
for $i=1,2,\dots,k.$

Passing to a subsequence, we may assume that
\begin{equation*}
	\lim_{n\to+\iy}\int\limits_{\om}|\nabla\sigma_{in}|^2=b_i,\quad ~~i=1,2,\dots,k,
\end{equation*}
then by \eqref{bl2} we have
\begin{equation*}
	I(\sigma_{1n},\dots,\sigma_{kn})=\frac{1}{N}(b_1+\cdots+b_k)+o(1).
\end{equation*}
Letting $n\to +\iy$ in \eqref{bl3}, we get that
\begin{equation}\label{lim E(un)}
	\begin{aligned}
		0\le E(u_1,\dots,u_k)
		& \le  E(u_1,\dots,u_k)+\frac{1}{N}(b_1+\cdots+b_k)\\
		& =\lim_{n\to+\iy}E(\vu_n)=\hb.
	\end{aligned}
\end{equation}

\noindent\textbf{Case 1.} $u_1, u_2,\dots,u_k\equiv 0.$

By \eqref{lim E(un)}, we have $b_1+b_2+\cdots+b_k>0$.
Then we assume that $(\sigma_{1n},\dots,\sigma_{kn})\ne (0,\dots,0)$ for $n$ large. Recall the definition of $\NR'$, and by \eqref{bl2}, it is easy to check that there exists $t_n $ such that $(t_n\sigma_{1n},\dots,t_n\sigma_{kn})\in \NR' $ and $t_n\to 1$ as $n\to+\iy$.
Then by \eqref{A=A'} and \eqref{lim E(un)}, we have
\begin{equation*}
	\begin{aligned}
	    \hb=\frac{1}{N}\sum_{i=1}^{k}b_i
		& =\lim_{n\to+\iy} I(\sigma_{1n},\dots,\sigma_{kn})\\
		& =\lim_{n\to+\iy} I(t_n\sigma_{1n},\dots,t_n\sigma_{kn})\\
		&\ge A'=A,
	\end{aligned}
\end{equation*}
a contradiction with Lemma \ref{key lemma}. Therefore, Case 1 is impossible.
\vskip 0.1in

\noindent\textbf{Case 2.} $u_i\not\equiv0 $ and $u_j\equiv0 $ for $i\in I\subsetneqq\{1,2,\dots,k\} $ and  $j\in \{1,2,\dots,k\}\backslash I $.

Without loss of genrality, we may assume that $u_1,\dots,u_{\tau} \not\equiv0, u_{\tau+1},\dots, u_k\equiv 0$. 
Then $(u_1,\dots,u_{\tau})$ is a nontrivial solution of the $\tau$-coupled system, and so
\begin{equation*}
	\hb \ge E(u_1,\dots,u_{\tau},0,\dots,0) \ge B_{\mu_1\ldots\mu_{\tau}}\ge \bar{B}_{\tau},
\end{equation*}
a contradiction with Lemma \ref{key lemma}. Therefore, Case 2 is also impossible.
\vskip 0.1in

Therefore, $u_1,\dots,u_k\not\equiv0$.

Since $E'(u_1,\dots,u_k)=0$, we have $(u_1,\dots,u_k)\in\MR $, by $\hb\le B $ and \eqref{lim E(un)} we have 
\begin{equation}\label{hb=B}
	E(u_1,\dots,u_k)=\hb=B.
\end{equation}
This means $(|u_1|,\dots,|u_k|)\in \MR\subset\MR' $ and $E(|u_1|,\dots,|u_k|)=\hb=B. $ By \eqref{M' definition} and \eqref{hb E}, there exists a Lagrange multiplier $\gamma\in\R $ such that
\begin{equation*}
 	E'(|u_1|,\dots,|u_k|)-\gamma G'(|u_1|,\dots,|u_k|)=0.
\end{equation*} 
Since 
$ E'(|u_1|,\dots,|u_k|)(|u_1|,\dots,|u_k|)=G(|u_1|,\dots,|u_k|)=0$ and 
\begin{equation*}
	\begin{aligned}
	   & G'(|u_1|,\dots,|u_k|)(|u_1|,\dots,|u_k|)\\
	   & =(2-2p)\int\limits_{\om}(\sum_{i=1}^{k}\mu_i|u_i|^{2p}+\sum_{i=1,j>i}^{k} 2\beta_{ij} |u_{i}|^{p}|u_{j}|^{p} ) < 0,
	\end{aligned}
\end{equation*}
we get that $\gamma=0 $ and $E'(|u_1|,\dots,|u_k|)=0$. This means $(|u_1|,\dots,|u_k|)$ is a least energy solution of \eqref{problem}.
Then by the maximum principle, we see that $|u_1|,\dots,|u_k|>0$ in $\om$.
Therefore, $(|u_1|,\dots,|u_k|)$ is a positive least energy solution of \eqref{problem}.
\hfill$\Box$

\vskip 0.2in

{\it Proof of Theorem \ref{symmetric theorem existence and uniqueness}}
~Assume that $N\ge 5, -\la_1(\Omega)<\la_1=\cdots=\la_k:=\la <0$ and $\mu_i,\beta_{ij}>0$.
The following proof is similar to the proof of Theorem \ref{main theorem_limit} (1).
It is well known that the Br{\'{e}}zis-Nirenberg problem \cite{Brezis1983}
\begin{equation*}
	-\Delta u+\lambda u=|u|^{2^*-2}u, \quad u\in H^1_0(\om) 	
\end{equation*}
has a positive least energy solution $\omega $ that attains 
\begin{equation*}
 	S_\lambda:=\inf\limits_{u\in H^1_0(\Omega)\backslash\{0\} }\dfrac{\int (|\nabla u|^2+\lambda u^2)~\mathrm{d}x }{(\int |u|^{2^*}~\mathrm{d}x )^\frac{2}{2^*} }.
\end{equation*}
By \eqref{hb E} and \eqref{hb=B}, we have
	\begin{equation*}
		B:=\inf\limits_{\vu\in \MR}E(\vu)
		=\inf\limits_{\vu\in \MR'}E(\vu),
	\end{equation*}
where
\begin{equation*}
	\begin{aligned}
	    E(\vu) 
	    =\dfrac{1}{2}\int\limits_{\om}\sum_{i=1}^{k}(|\nabla u_i|^2+\lambda u_i^2)
	    -\dfrac{1}{2p}\int\limits_{\om}(\sum_{i=1}^{k}\mu_i|u_i|^{2p}+\sum_{i=1,j>i}^{k} 2\beta_{ij} |u_{i}|^{p}|u_{j}|^{p} ),
	\end{aligned}
\end{equation*}
\begin{equation*}
		\MR=\{\vu\in H ~|~u_i\not\equiv 0,~E'(\vu)(0,\dots,u_i,\dots,0)=0~\hbox{for}\; i=1,2,\dots,k \},
\end{equation*}
\begin{equation*}
		\MR'=\{\vu\in H\backslash\{\vec{\textbf{0}}\}~|~E'(\vu)~\vu=0~\hbox{for}\; i=1,2,\dots,k \}.
\end{equation*}
Then by standard argument(cf. \cite{peng2016elliptic}), we have
\begin{equation}\label{B lambda}
	B=\inf_{\vu\in H\backslash\{\vec{\textbf{0}}\}}
	\dfrac{ \big(\sum_{i=1}^{k}\int(|\nabla u_i|^2+\lambda u_i^2)~ \big)^{\frac{N}{2}}}
	{N\big(\sum_{i=1}^{k}\mu_i\int|u_i|^{2p}+\sum_{i,j=1,i< j}^{k} 2\beta_{ij}\int|u_{i}u_{j}|^{p} \big)^{\frac{N-2}{2} } }.
\end{equation}
Recall the minimizing problem introducd in the proof of Lemma \ref{existence algebric}:
	\begin{equation*}
		d_k:=\inf\limits_{\vt\in\PR_k }\GR_k(\vt),
	\end{equation*}
	where
	\begin{equation*}
		\GR_k(\vt)=\sum\limits_{i=1}^k(\frac{t_i^2}{2}-\frac{\mu_i|t_i|^{2p}}{2p})- \frac{1}{2p}\big(\sum\limits_{i=1,j>i}^{k}2\beta_{ij}|t_i|^{p}|t_j|^{p} \big),
	\end{equation*}
	\begin{equation*}
		\PR_k= \{\vt\in\R^k \backslash\{\vec{\textbf{0} } \} ~|~P_k(\vt):=\sum\limits_{i=1}^k(t_i^2-\mu_i|t_i|^{2p})-  \sum\limits_{i=1,j>i}^{k}2\beta_{ij}|t_i|^{p}|t_j|^{p}=0   \}.
	\end{equation*}
By standard argument (cf. \cite{peng2016elliptic}), we see that
\begin{equation*}
	d_k=\inf_{\vt\in\R^k\backslash\{\vec{\textbf{0}}\}}
	\dfrac{ \big(\sum_{i=1}^k t_i^2\big)^{\frac{N}{2}}}{N\big(\sum\limits_{i=1}^k\mu_i|t_i|^{2p}-  \sum\limits_{i=1,j>i}^{k}2\beta_{ij}|t_i|^{p}|t_j|^{p} \big)^{\frac{N-2}{2} } },
\end{equation*}
then by Sobolev inequality
\begin{equation*}
	\int\limits_\Omega (|\nabla u|^2+\lambda u^2)~\mathrm{d}x
	\ge S_\lambda\bigg(\int\limits_\Omega |u|^{2p}~\mathrm{d}x\bigg)^\frac{2}{2p},
\end{equation*}
we have
\begin{equation*}
	B\ge \inf_{\vu\in H\backslash\{\vec{\textbf{0}}\}}
	\dfrac{ \big(\sum_{i=1}^{k}\Vert u_i\Vert_{2p}^2~ \big)^{\frac{N}{2}}}
	{N\big(\sum_{i=1}^{k}\mu_i\Vert u_i\Vert^{2p}_{2p}+\sum_{i,j=1,i< j}^{k} 2\beta_{ij} \Vert u_{i}\Vert^p_{2p} \Vert u_{j}\Vert^{p}_{2p} \big)^{\frac{N-2}{2} } }S_{\lambda}^{\frac{N}{2}},
\end{equation*}
where $\Vert\cdot\Vert_{q}$ denotes the norm on $L^{q}(\Omega)$ and $2p=2^*=\frac{2N}{N-2} $.
Recall Lemma \ref{existence algebric}, then we see that 
\begin{equation*}
	B=d_kS_\lambda^\frac{N}{2}
\end{equation*}
is attained by
\begin{equation*}
	(t_1\omega,t_2\omega,\dots,t_k\omega),
\end{equation*}
where $(t_1, t_2, \dots, t_k) $  is the positive solution of \eqref{algebric system} that attains $d_k$.
In fact, we deduce that
\begin{equation}\label{minimizing B}
	B= \inf_{\vu\in D\backslash\{\vec{\textbf{0}}\}}
	\dfrac{ \big(\sum_{i=1}^{k}\Vert u_i\Vert_{2p}^2~ \big)^{\frac{N}{2}}}
	{N\big(\sum_{i=1}^{k}\mu_i\Vert u_i\Vert^{2p}_{2p}+\sum_{i,j=1,i< j}^{k} 2\beta_{ij} \Vert u_{i}\Vert^p_{2p} \Vert u_{j}\Vert^{p}_{2p} \big)^{\frac{N-2}{2} } }S_{\lambda}^{\frac{N}{2}}.
\end{equation}
Suppose that $B$ is attained  by some nonzero $\vec{\textbf{v}}=(v_1,v_2,\dots,v_k)$.
Then by H\"older inequality and Sobolev inequality, we see from \eqref{B lambda} and \eqref{minimizing B} that 
 \begin{equation*}
 	 \int\limits_\Omega(|\nabla v_i|^2+\lambda v_i^2)~\mathrm{d}x =S_\lambda\Big(\int\limits_\Omega |v_i|^{2p}~\mathrm{d}x \Big)^{\frac{2}{2p}},
 \end{equation*}	 
 \begin{equation*}
 	 \quad \Big(\int\limits_\Omega |v_iv_j|^{p}~\mathrm{d}x\Big)^2=\int\limits_\Omega |v_i|^{2p}~\mathrm{d}x\int\limits_\Omega |v_j|^{2p}~\mathrm{d}x
 \end{equation*}
 for all $i,j=1,2,\dots,k$, which implies $v_i=0$ or $v_i=c_i\omega_{i}$ for some $c_i\ne0$, where $\omega_i$ is a positive least energy solution of Br\'ezis-Nirenberg problem \eqref{Brezis-Nirenberg problem 1}.

Moreover, for $v_{i}=c_i\omega_i\ne 0$, H\"older's inequality implies that there exists $k_i, k_j>0$, such that $k_i(c_iw_i)^2=k_j(c_jw_j)^2$.
Thus we can redenote all $v_i=c_i\omega $ for all $i$ where $c_{i}=0 $ or $c_{i}\ne0$.

By \eqref{minimizing B}, we have that $ (\Vert v_1\Vert_{2p}, \Vert v_2\Vert_{2p}, \dots, \Vert v_k\Vert_{2p} )$ attains $d_k$. 
It is easy to see that $(c_1, c_2,\dots, c_k) $ attains $d_k$.
Recall Lemma \ref{existence algebric} that $d_k $ is attained by a positive solution if $\beta_{ij}>0$. 
Then $c_i>0$ for all $i=1,2,\dots,k$.

That is, $B$ is attained by $\vu$ if and only if
\begin{equation*}
	\vu=(\tilde{t}_1\omega,\tilde{t}_2\omega,\dots,\tilde{t}_k\omega),
\end{equation*}
where $ (\tilde{t}_1, \tilde{t}_2, \dots, \tilde{t}_k)$ is defined in \eqref{least algebric solution}.
Then the positive least energy solution of the system \eqref{problem} must be of the synchronized form $(\tilde{t}_1\omega,\tilde{t}_2\omega,\dots,\tilde{t}_k\omega)$.

Now we assume that $\beta_{ij}:=\beta>0, i\ne j, i,j=1,2,\dots,k$.
Assume that $\Omega\subset\RN$ is a ball, then the positive least energy solution of the Br{\'{e}}zis-Nirenberg problem \eqref{Brezis-Nirenberg problem 1}
is unique (cf. \cite{Brezis1983}).
Recall \cite[Proposition 3.3]{Wu2019a} that there exists $\beta_k>0$, such that the algebric system \eqref{algebric system} has a unique solution that attains $d_{k}$ for $\beta>\beta_k$.
Therefore, the positive least energy solution of \eqref{problem} is unique under these assumptions.
\hfill$\Box$

\vskip 0.2in

%

\noindent{\bf Acknowledgements}  The authors wish to thank  Dr. Y. Wu  very much for his  kind suggestion  and  valuable comments on the proof of Theorem  \ref{symmetric theorem existence and uniqueness}.


\vskip0.26in


\end{document}